\newcommand{\overbar}[1]{\mkern 1.5mu\overline{\mkern-1.5mu#1\mkern-1.5mu}\mkern 1.5mu}
\theoremstyle{plain}
\newtheorem{theorem}{Theorem}[section]
\newtheorem{observation}[theorem]{Observation}
\newtheorem{lemma}[theorem]{Lemma}
\newtheorem{fact}[theorem]{Fact}
\theoremstyle{definition}
\newtheorem{definition}[theorem]{Definition}
\newtheorem*{remark*}{Remark}
\newtheorem*{claim*}{Claim}
\theoremstyle{remark}
\def\Forb{\mathop{\mathrm{Forb}}\nolimits}
\def\str#1{\mathbf {#1}}
\begin{document}
\bibliographystyle{alpha}

\title{Forbidden cycles in metrically homogeneous graphs}

\author[J. Hubi\v cka]{Jan Hubi\v cka}
\address{Department of Applied Mathematics (KAM), Charles University, Ma\-lo\-stransk\'e n\'a\-m\v es\-t\'\i~25, Praha~1, Czech Republic}
\email{hubicka@kam.mff.cuni.cz}

\author[M. Kompatscher]{Michael Kompatscher}
\address{Department of Algebra, Charles University, Prague, Czech Republic}
\email{kompatscher@karlin.mff.cuni.cz}

\author[M. Kone\v cn\'y]{Mat\v ej Kone\v cn\'y}
\address{Institute of Algebra, TU Dresden, Dresden, Germany, and\\ Department of Applied Mathematics (KAM), Charles University, Ma\-lo\-stransk\'e n\'a\-m\v es\-t\'\i~25, Praha~1, Czech Republic}
\email{matej.konecny@tu-dresden.de}

\thanks{Matěj Konečný was supported by a SFG of the Faculty of Mathematics and Physics, Charles University. Michael Kompatscher was supported by Charles University Research Centre programs PRIMUS/SCI/12 and UNCE/SCI/022. In the final stages of this project, Jan Hubička and Matěj Konečný were supported by the project 21-10775S of the Czech Science Foundation (GAČR) and by a project that has received funding from the European Research Council (ERC) under the European Union’s Horizon 2020 research and innovation programme (grant agreement No 810115, DYNASNET), and Michael Kompatscher was supported by a project that has received funding from the European Research Council (ERC) under the European Unions Horizon 2020 research and innovation programme (grant agreement No. 771005, CoCoSym).
}

\begin{abstract}
In a recent paper by a superset of the authors it was proved that for every primitive 3-constrained space $\Gamma$ of finite diameter $\delta$ from Cherlin's catalogue of metrically homogeneous graphs, there exists a finite family $\mathcal F$ of $\{1,\ldots, \delta\}$-edge-labelled cycles such that a $\{1,\ldots, \delta\}$-edge-labelled graph is a subgraph of $\Gamma$ if and only if it contains no homomorphic images of cycles from $\mathcal F$. However, the cycles in the families $\mathcal F$ were not described explicitly as it was not necessary for the analysis of Ramsey expansions and the extension property for partial automorphisms.

This paper fills this gap by providing an explicit description of the cycles in the families $\mathcal F$, heavily using the previous result in the process. Additionally, we explore the potential applications of this result, such as interpreting the graphs as semigroup-valued metric spaces or homogenizations of $\omega$-categorical $\{1,\delta\}$-edge-labelled graphs.


\end{abstract}

\maketitle

\section{Introduction}
A {\em metrically homogeneous graph} is a (countable) connected graph with the
property that the associated metric space is homogeneous.  (Here the {\em
associated metric space} of a graph shares its vertex set, and the
distance between two vertices is the length of the shortest path connecting them.
A metric space is {\em homogeneous} if every isomorphism, or isometry, between finite
subspaces extends to an automorphism of the whole metric space.) In a recent monograph, Cherlin
\cite{Cherlin2013,Cherlin2011b} gives a catalogue of metrically homogeneous graphs which
is conjectured to be complete and confirmed up to diameter
three~\cite{Amato2016}. He describes them by giving a list of forbidden triangles in the associated (path-)metric spaces. This is so far the most elaborate addition to
the classification programme of homogeneous structures. 

In this paper we give an alternative description of metrically homogeneous graphs
by means of forbidden cycles. This is motivated by the applications in the structural
Ramsey theory and topological dynamics outlined in
Section~\ref{sec:motivation}, but the result is of independent combinatorial interest.
We focus on those metrically homogeneous graphs which can be described by means of forbidden
triangles in the associated metric spaces (i.e.~\emph{$3$-constrained} cases) with
a primitive automorphism group. (Recall that an automorphism group is \emph{primitive} if it acts transitively and preserves no nontrivial partition of vertices.)  Cherlin described such graphs by means of five
numeric parameters (see Section~\ref{sec:3-constrained}) which play a key role even in the rest of the catalogue. Thus our families of forbidden cycles can be generalised to the rest of
the catalogue by techniques discussed in greater detail in~\cite{Cherlin2013} as well as in \cite{Aranda2017c}.

Our main result is a precise characterisation of forbidden sub-cycles of every
metric space associated to a primitive 3-constrained metrically homogeneous
graph in the catalogue. We show that in addition to non-metric cycles (i.e. cycles where one edge is greater distance than the sum of rest edges) all the cases can be described as a combination of four naturally defined families as stated in Theorem~\ref{thm:main}. This extends (and
completes) earlier results~\cite{Aranda2017,Aranda2017a,Aranda2017c,Coulson,Konecny2018bc} which prove that
these structures are described by a finite set of forbidden cycles.

Before stating our main result we take time to review the history of the problem
and give some basic definitions.

\subsection{Motivation}
\label{sec:motivation}
Our original motivation stems from the study of Ramsey classes and Ramsey
structures. We refer the reader to one of the recent surveys~\cite{NVT14,Bodirsky2015,hubicka2025twenty}
for precise definitions and we only review some recent developments which led to
our work.

In~\cite{Hubicka2016} it is shown that describing homogeneous structures by
means of forbidden substructures (or, more precisely, obtaining an upper bound
on the size of minimal such substructures) is the key to obtaining a stronger property---the
existence of a Ramsey expansion~\cite[Theorem 2.1]{Hubicka2016}.  This builds on
the ideas of Ne\v se\v ril's earlier result on the existence of a Ramsey expansion of
the Urysohn metric space~\cite{Nevsetvril2007} (a related result was also obtained by Dellamonica and R{\"o}dl~\cite{Dellamonica2012})
which itself extends the earlier technique called \emph{partite construction} which was developed in
1980's and led to a simpler proof of the well known Ne\v set\v ril--R{\"o}dl
theorem~\cite{Nevsetvril1989} (see
\cite{Abramson1978,Nevsetvril1977,Nevsetvril1977b} for original proofs).

Combining the model-theoretic concepts of strong amalgamation,
forbidden substructures and the combinatorial tool of partite construction
resulted in a systematic framework which is used to prove that a given class is
Ramsey. It covers many known examples of Ramsey classes and also gives new
ones~\cite{Hubicka2016}. However, upon finishing these tools it was not clear
which homogeneous structures have such a description.  It is
clear that some structures, such as equivalences and partial orders, can not be
described this way, but there are additional tools making it possible to fit them
to the framework of~\cite{Hubicka2016} and hence find their Ramsey
expansions. Thus it was not clear which examples of homogeneous structures do
not have a good Ramsey expansion.

During an initial discussion of this problem, Cherlin suggested a particular
example in his catalogue of metrically homogeneous graphs as a possible example
of homogeneous structure in finite language which can not be characterised by a
finite family of forbidden substructures. While this example was later shown
to have such a finite description, it was necessary to develop new tools to analyse it.
More recently, the question whether every $\omega$-categorical homogeneous structure has
a ``good'' (\emph{precompact}) Ramsey expansion was answered negatively
in~\cite{Evans2}. However, it still remains open for homogeneous structures in a finite language.

Independently, Ramsey expansions of restricted metric spaces were also
systematically studied by Nguyen Van Th\'e~\cite{The2010}. He has shown the
existence of Ramsey expansions for classes of $S$-metric spaces (i.e. metric
spaces where all distances are in a fixed set $S$) for $|S|\leq 4$. These
results were extended to all meaningful choices of $S$ in~\cite{Hubicka2016}.
Nguyen Van Th\'e was motivated by a long-standing open problem asking whether the class of all finite 
affinely independent Euclidean metric spaces has a precompact Ramsey
expansion~\cite{NVT14,Kechris2005} which seems to be still out of reach of the existing
techniques.

Special metric spaces thus clearly presented and present interesting and challenging
examples in the area. The full analysis of Cherlin's catalogue was started during the
Ramsey DocCourse in Prague in 2016 and was completed a year later~\cite{Aranda2017}.
During this work, new connections were discovered. In particular, essentially
the same techniques can be also used to show the extension property for
partial automorphisms (EPPA) using the Herwig--Lascar Theorem~\cite{herwig2000} (or, more conveniently, its recent strengthening by Hubička, Konečný, and Nešetřil~\cite{Hubicka2018EPPA} which was published between the submission and publication of this paper).
They are also closely related to the stationary independence relation,
which was used by Tent and Ziegler~\cite{Tent2013} to show several properties of the
automorphism group of the Urysohn metric space.\footnote{Between the submission and publication of this paper, Evans, Hubička, Konečný, Li, and Ziegler~\cite{Evanssimplicity} adapted the Tent--Ziegler method for certain generalised metric spaces. In particular, combining with the results of this paper, one can show that the automorphism groups of the structures studied in this paper are simple.}
We shall remark that EPPA for metric spaces was independently shown by Solecki~\cite{solecki2005}
and Vershik~\cite{vershik2008} and generalised by Conant~\cite{Conant2015}, see also~\cite{Hubicka2017sauerconnant,Hubicka2018metricEPPA}.

The analysis carried in~\cite{Aranda2017} does not give a precise description of the
forbidden substructures, only a rather generous upper bound on their size. We hope that having
a precise description will shed more light onto the nature of the catalogue
and also relate it to the concept of
homogenization~\cite{Covington1990,Hubicka2013,Hubicka2009}.  It is not
difficult to see (using the results of~\cite{Aranda2017}) that every metric space associated to a metrically
homogeneous graph of finite diameter $\delta$ can be seen as a homogenization
of a structure containing only distances 1 and $\delta$. This, in turn, can
explain some phenomena, such as twisted automorphisms~\cite{coulson2018twists}
as sketched in Section~\ref{sec:1delta}.

\subsection{The primitive 3-constrained metrically homogeneous graphs}
\label{sec:3-constrained}
In this paper, we shall only be concerned with a subset of the metrically
homogeneous graphs, namely the primitive 3-constrained classes. The following
definition and theorem of Cherlin are simplified to only contain the classes
relevant for our paper.

\label{sec:3constrained}
\begin{definition}[Triangle constraints]
\label{defn:numerical}
Given integers $\delta$, $K_1$, $K_2$, $C_0$ and $C_1$, we consider the class
$\mathcal A^\delta_{K_1,K_2,C_0,C_1}$ of all finite metric spaces $\str{M}=(M,d)$ with integer
distances such that $d(u,v)\leq \delta$ (we call $\delta$ the {\em diameter} of $\mathcal A^\delta_{K_1,K_2,C_0,C_1}$) for every $u,v\in M$
and for every triangle $u,v,w\in M$ with perimeter $p=d(u,v)+d(u,w)+d(v,w)$ the following are true: 
\begin{itemize}
 \setlength\itemsep{0em}
 \item if $p$ is odd then $2K_1 < p < 2K_2 + 2m$,
 \item if $p$ is odd then $p<C_1$, and
 \item if $p$ is even then $p<C_0$.
\end{itemize}
Here $m=\min\{d(u,v),\allowbreak d(u,w),\allowbreak d(v,w)\}$ is the length of the shortest edge of $u,v,w$.
\end{definition}
Intuitively, the parameter $K_1$ forbids all odd cycles shorter than $2K_1+1$, while $K_2$ ensures that the difference in length between even- and odd-distance paths connecting any
pair of vertices is less than $2K_2+1$. The parameters $C_0$ and $C_1$
forbid induced long even and odd cycles respectively. Not every combination of
numerical parameters makes sense and leads to an amalgamation class. Those that do make sense are
described by the definition below and those that lead to an amalgamation class are
characterised by Cherlin's Admissibility Theorem (stated here in simplified form considering only primitive graphs as Theorem~\ref{thm:admissible}).
\begin{definition}[Acceptable numerical parameters]
A sequence of parameters $(\delta,\allowbreak K_1,\allowbreak K_2,\allowbreak C_0,\allowbreak C_1)$ is {\em acceptable} if it satisfies the following conditions:
\begin{itemize}
  \setlength\itemsep{0em}
  \item $3\leq \delta< \infty$;
  \item $1\leq K_1\leq K_2\leq \delta$;
  \item $2\delta+2\leq C_0,C_1\leq 3\delta+2$. Here $C_0$ is even and $C_1$ is odd.
\end{itemize}
\end{definition}
We remark that our notion of acceptability is a restricted form of acceptability in Cherlin's monograph to exclude
non-primitive cases and cases of infinite diameter.
\begin{theorem}[Cherlin's Admissibility Theorem~\cite{Cherlin2013} (simplified)]
\label{thm:admissible}
Let $(\delta,\allowbreak K_1,\allowbreak K_2,\allowbreak C_0,\allowbreak C_1)$ be an acceptable sequence of parameters (in particular, $\delta\geq 3$). Then
the associated class $\mathcal A^\delta_{K_1,K_2,C_0,C_1}$ is an amalgamation class if and only if
one of the following two groups of conditions is satisfied, where we write $C$ for $\min(C_0,C_1)$
and $C'$ for $\max(C_0,C_1)$:
\begin{enumerate}[label=(\Roman*)]
  \setcounter{enumi}{1}

\setlength\itemsep{0em}
\item\label{II} $C\leq 2\delta+K_1$, and
\begin{itemize}
 \setlength\itemsep{0em}
 \item $C=2K_1+2K_2+1$;
 \item $K_1+K_2\geq \delta$;
 \item $K_1+2K_2\leq 2\delta-1$, and:
\end{itemize}
\begin{enumerate}[label=(II\Alph*)]
\setlength\itemsep{0em}
\item\label{IIa} $C'=C+1$, or
\item\label{IIb} $C'>C+1, K_1=K_2$, and $3K_2=2\delta-1$.
\end{enumerate}
\item\label{III} $C>2\delta+K_1$, and:
\begin{itemize}
 \setlength\itemsep{0em}
 \item $K_1+2K_2\geq 2\delta-1$ and $3K_2\geq 2\delta$;
 \item If $K_1+2K_2=2\delta-1$ then $C\geq 2\delta+K_1+2$;
 \item If $C'>C+1$ then $C\geq 2\delta+K_2$.
\end{itemize}
\end{enumerate}
\end{theorem}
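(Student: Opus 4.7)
The plan is to prove the two directions of the equivalence separately. For \emph{sufficiency}, assuming the acceptable parameters satisfy either (II) or (III), we must verify that $\mathcal A^\delta_{K_1,K_2,C_0,C_1}$ has the amalgamation property. For \emph{necessity}, assuming the parameters are acceptable but violate both (II) and (III), we must exhibit an amalgamation failure by producing two metric spaces sharing a common substructure that admit no valid amalgam in the class.

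For the sufficiency direction, the natural approach is to use a shortest-path amalgamation strategy. Given a common base $\str{C}$ and extensions $\str{A}, \str{B} \supseteq \str{C}$ in the class, one constructs the amalgam $\str{D}$ on the vertex set $A \cup B$ by preserving all distances inside $\str{A}$ and $\str{B}$, and for each pair $a \in A \setminus C$, $b \in B \setminus C$ one assigns $d(a,b)$ to be either $\min_{c \in C}(d(a,c)+d(c,b))$, the corresponding maximum $|d(a,c)-d(c,b)|$, or an intermediate value of the parity demanded by the $C_0, C_1$ constraints, capped at $\delta$. The verification then reduces to checking that every new triangle $\{a,b,x\}$ (with $a,b$ the inserted pair and $x \in A \cup B$) satisfies the four constraints on the perimeter $p$: the lower $K_1$ bound on odd perimeters, the $K_2$ upper bound in terms of the shortest edge, and the two cutoffs $C_0, C_1$. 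The hypotheses (II) and (III) are precisely the closure conditions on the parameter tuple that ensure no triangle obtained from admissible $\str{A}, \str{B}$ is forbidden.

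For the necessity direction, when one of the equalities or inequalities in (II) or (III) is violated, one constructs an explicit obstruction. A prototypical obstruction is a pair of triangles sharing an edge (or more generally a pair of small fans sharing a center) whose only possible amalgams produce a triangle with perimeter in a \emph{forbidden gap}, for instance an odd triangle whose perimeter is just below $2K_1+1$ but just above the largest odd perimeter allowed by $C_1$, or an even triangle forced to sit at perimeter $C_0$. Going through the cases, the failure of $C = 2K_1 + 2K_2 + 1$, of $K_1 + K_2 \geq \delta$, of $K_1 + 2K_2 \leq 2\delta - 1$, and so on, each yields a different forbidden-gap configuration, and together these cover every way in which an acceptable tuple can fall outside (II) $\cup$ (III).

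The main obstacle is the intricate case analysis, especially on the boundary between (II) and (III) and within the subcases (IIA), (IIB). The configuration in (IIB) with $K_1 = K_2$ and $3K_2 = 2\delta - 1$ is particularly delicate: here the gap $C' > C + 1$ would ordinarily obstruct amalgamation, but the tightness of the $K$-parameters leaves just enough room for the shortest-path construction to avoid producing a forbidden odd perimeter. Verifying that the amalgam never exceeds diameter $\delta$ also requires careful use of the lower bounds $K_1 + K_2 \geq \delta$ (case II) and $3K_2 \geq 2\delta$ (case III); without these, a chain of short odd and even paths through the base $\str{C}$ can force two amalgam vertices to be at distance greater than $\delta$, which is the recurring source of the subtlety throughout the argument.
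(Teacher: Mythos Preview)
The paper does not prove this theorem; it is quoted (in simplified form) from Cherlin's monograph~\cite{Cherlin2013} as a background result, with no argument given. There is therefore no proof in the paper to compare your proposal against.

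Your proposal is a plausible high-level outline of the shape such a proof takes---sufficiency via an explicit amalgamation procedure, necessity via explicit obstruction configurations---and this is indeed roughly the architecture of Cherlin's argument. But what you have written is a plan, not a proof: none of the case analysis is carried out, the amalgamation procedure is described only vaguely (``either the min, the max, or an intermediate value of the right parity''), and the ``prototypical obstruction'' is never made concrete for any single violated inequality. The actual content of Cherlin's theorem is precisely the lengthy case-by-case verification you allude to; his monograph devotes a substantial chapter to it, and the subtleties you mention (the (IIB) boundary, the diameter bound) each require real work. If your intent is to supply a proof, the cases must actually be executed; if your intent is to use the result, a citation suffices, which is what the paper does.
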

An acceptable sequence of parameters $(\delta,\allowbreak K_1,\allowbreak K_2,\allowbreak C_0,\allowbreak C_1)$ is called {\em admissible} if and only if it satisfies one of the sets of conditions in Theorem~\ref{thm:admissible}.

Let us again remark that this is a simplified version of Cherlin's theorem which only allows for the primitive 3-constrained cases. This is also the reason why the first case has number~\ref{II}; we wanted to keep the case numbering the same as in Cherlin's monograph.

\subsection{Our results}
A \emph{$\delta$-edge-labelled graph} is a graph $\str G=(V,E)$ together with a labelling function $\ell\colon E\rightarrow \{1,2,\ldots,\delta\}$ giving each edge a label. Alternatively, we can treat $\str G$ as a structure in a relational language with symmetric binary relations $R^1, R^2, \ldots, R^\delta$ such that each pair of vertices is in at most one relation. We will denote $\mathcal G^\delta$ the class of all $\delta$-edge-labelled graphs.

If $\str G$ is a complete graph and $\ell$ satisfies the triangle inequality, then $(\str G, \ell)$ is a metric space. If further $\ell$ omits all triangles from Definition~\ref{defn:numerical} for some admissible $(\delta,\allowbreak K_1,\allowbreak K_2,\allowbreak C_0,\allowbreak C_1)$, then we can identify $(\str G,\ell)$ with a member of $\mathcal A^\delta_{K_1,K_2,C_0,C_1}$.

Now, for admissible parameters $(\delta,\allowbreak K_1,\allowbreak K_2,\allowbreak C_0,\allowbreak C_1)$, we define
$$\mathcal G^\delta_{K_1,K_2,C_0,C_1} = \left\{(V,E,\ell)\in \mathcal G^\delta : (\exists (V,d)\in \mathcal A^\delta_{K_1,K_2,C_0,C_1})(d|_E = \ell)\right\},$$
i.e. the class of all $\delta$-edge-labelled graphs one can get from a member of $\mathcal A^\delta_{K_1,K_2,C_0,C_1}$ by deleting some edges. Alternatively $\mathcal G^\delta_{K_1,K_2,C_0,C_1}$ is precisely the class of $\delta$-edge-labelled graphs with the property that one can add labels to the non-edges and get a metric space from $\mathcal A^\delta_{K_1,K_2,C_0,C_1}$.

In~\cite{Aranda2017} it was proved that for every admissible sequence of parameters $(\delta,\allowbreak K_1,\allowbreak K_2,\allowbreak C_0,\allowbreak C_1)$ there is a finite family $\mathcal F$ of $\delta$-edge-labelled cycles such that $\mathcal G^\delta_{K_1,K_2,C_0,C_1} = \Forb(\mathcal F)$ where $\Forb(\mathcal F)$ means the subclass of $\mathcal G^\delta$ such that there is no $\str F\in\mathcal F$ with a homomorphism to some member of the subclass.

In this paper we give an explicit description of $\mathcal F$ for each admissible $(\delta,\allowbreak K_1,\allowbreak K_2,\allowbreak C_0,\allowbreak C_1)$. It is not surprising that there are multiple types of forbidden cycles corresponding to triangles forbidden by different bounds ($K_1, K_2, C_0, C_1$).

\begin{definition}[Forbidden cycles] \label{defn:forbcycles}
Let $(\delta,\allowbreak K_1,\allowbreak K_2,\allowbreak C_0,\allowbreak C_1)$ be an admissible sequence of parameters. By a cycle we mean a $\delta$-edge-labelled graph $(V,E,\ell)$, where the graph $(V,E)$ is a cycle. We say that $(V,E,\ell)$ is a \emph{cycle with distances $d_1,d_2,\ldots,d_k$} if one can order the edges as $E = \{e_1, e_2, \ldots, e_k\}$ such that $\ell(e_i) = d_i$ for every $1\leq i\leq k$. If $\str C = (V,E,\ell)$ is a cycle with distances $d_1, \ldots, d_k$ in this cyclic order, we will write $\str C = (d_1, \ldots, d_k)$. We say that a cycle with distances $d_1,d_2,\ldots,d_k$ \emph{has perimeter $p$} if $p = \sum_i d_i$.

The following will be the building blocks of $\mathcal F$:
\begin{description}
\item[Non-metric cycles] Cycles with edges $a, x_1, x_2, \ldots, x_k$ such that $$a > \sum_{i=1}^k x_i.$$
\item[$C$-cycles] Cycles with distances $d_0, d_1, \ldots, d_{2n}, x_1, \ldots, x_k$ for some $n\geq 0$ such that
$$\sum_{i=0}^{2n} d_i > n(C - 1) + \sum_{i=1}^k x_i.$$
\item[$C_0$-cycles] Cycles of even perimeter with distances $d_0, d_1, d_2, x_1, \ldots, x_k$ for $n\in\{0,1\}$ such that 
$$\sum_{i=0}^{2n} d_i > n(C_0 - 1) + \sum_{i=1}^k x_i.$$
\item[$C_1$-cycles] Cycles of odd perimeter with distances $d_0, d_1, d_2, x_1, \ldots, x_k$ for $n\in\{0,1\}$ such that 
$$\sum_{i=0}^{2n} d_i > n(C_1 - 1) + \sum_{i=1}^k x_i.$$
\item[$K_1$-cycles] \emph{Metric} cycles of odd perimeter with distances $x_1, \ldots, x_k$ such that $$2K_1 > \sum_{i=1}^k x_i.$$
\item[$K_2$-cycles] Cycles of odd perimeter with distances $d_1,\ldots,d_{2n+2}, x_1, \ldots,x_k$ such that $$\sum_{i=1}^{2n+2}d_i > 2K_2 + n(C-1) + \sum_{i=1}^k x_i.$$
\end{description}
Note that the non-metric cycles are precisely the union of $C_0$- and $C_1$-cycles (or the $C$-cycles) for $n=0$. Sometimes we will treat them separately, sometimes the fact that non-metric cycles belong to the $C_x$-cycles family will be useful.
\end{definition}

And now we can state the main result of this paper.
\begin{theorem}\label{thm:main}
Let $(\delta,\allowbreak K_1,\allowbreak K_2,\allowbreak C_0,\allowbreak C_1)$ be an admissible sequence of parameters. Then $\mathcal G^\delta_{K_1,K_2,C_0,C_1} = \Forb(\mathcal F)$, such that $\mathcal F$ is the one of the following:
\begin{enumerate}
\item If $|C_0-C_1|=1$, $\mathcal F$ is the union of all $C$-cycles, $K_1$-cycles, $K_2$-cycles and non-metric cycles;
\item if $|C_0-C_1|>1$ and $\delta > 5$ or the parameters come from Case~\ref{III}, $\mathcal F$ is the union of all $C_0$-cycles, $C_1$-cycles, $K_1$-cycles and $K_2$-cycles, and non-metric cycles; and
\item if $|C_0-C_1|>1$, $\delta = 5$ and the parameters come from Case~\ref{IIb}, $\mathcal F$ is the union of all $C_0$-cycles, $C_1$-cycles, $K_1$-cycles, $K_2$-cycles, the cycle $(5,5,5,5,5)$, and non-metric cycles.
\end{enumerate}
\end{theorem}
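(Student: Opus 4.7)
The proof proceeds by establishing both inclusions $\mathcal G^\delta_{K_1,K_2,C_0,C_1} \subseteq \Forb(\mathcal F)$ (\emph{soundness}: every cycle of $\mathcal F$ is genuinely forbidden) and $\Forb(\mathcal F) \subseteq \mathcal G^\delta_{K_1,K_2,C_0,C_1}$ (\emph{completeness}: avoiding $\mathcal F$ suffices), in each of the three cases of the theorem.

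For soundness I would proceed by a \emph{shrinking argument} applied to each family of Definition~\ref{defn:forbcycles}. Suppose $\str F \in \mathcal F$ admits a homomorphism into some $\str G \in \mathcal G^\delta_{K_1,K_2,C_0,C_1}$; completing $\str G$ to a metric space $\str M \in \mathcal A^\delta_{K_1,K_2,C_0,C_1}$ equips the images of the vertices of $\str F$ with integer distances satisfying the triangle inequality. I would then repeatedly pick three consecutive vertices $v_{i-1},v_i,v_{i+1}$ of the cycle and replace the two-edge path by the single edge $\{v_{i-1},v_{i+1}\}$ of length $d_{\str M}(v_{i-1},v_{i+1})$. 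Each such contraction is an admissible triangle of $\str M$, so its perimeter is at most $C-1$ (or at most $C_0-1$ when even, $C_1-1$ when odd). Tracking the defining inequality of the cycle family through repeated shrinks---decrementing the index $n$ in the $C$/$C_0$/$C_1$/$K_2$ cases each time a high-perimeter triangle is contracted---we eventually reach a triangle that directly violates one of the bounds in Definition~\ref{defn:numerical}, a contradiction. For $K_1$- and $K_2$-cycles the total perimeter is odd; at each step one either contracts a triangle of odd perimeter (yielding the contradiction immediately via the $K_1$ or $K_2$ bound) or a triangle of even perimeter, in which case the shrunk cycle retains odd perimeter and the argument proceeds by induction on the number of edges. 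Homomorphic identifications only shorten cycles and so can only sharpen the contradiction.

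For completeness, given $\str G=(V,E,\ell) \in \Forb(\mathcal F)$ I would construct a labelling of every non-edge by an integer in $\{1,\ldots,\delta\}$ so that the resulting complete graph lies in $\mathcal A^\delta_{K_1,K_2,C_0,C_1}$. For each pair $\{u,v\}$ the relevant data are $e(u,v)$ and $o(u,v)$, the shortest even-parity and odd-parity walk lengths in $\str G$ (both capped at $\delta$). The forbidden-cycle conditions translate into concrete constraints on $d(u,v)$: non-metric, $C_0$- and $C_1$- (or $C$-) cycles enforce upper bounds with the appropriate parity; $K_1$-cycles force $o(u,v)$ to be bounded below by $2K_1+1-e(u,v)$-type expressions; $K_2$-cycles control the parity gap $|e(u,v)-o(u,v)|$. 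Choosing $d(u,v)$ respecting all these bounds, and then verifying---by applying the same forbidden-cycle conditions to triangles---that the resulting total labelling satisfies the admissibility conditions of Definition~\ref{defn:numerical}, puts $\str G$ into $\mathcal A^\delta_{K_1,K_2,C_0,C_1}$.

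The principal obstacle lies in case (3). Under $\delta=5$ and parameters from~\ref{IIb} with $|C_0-C_1|>1$, the relation $3K_2=2\delta-1=9$ together with $K_1=K_2$ squeezes the admissible triangles so tightly that the all-$\delta$ pentagon $(5,5,5,5,5)$ is an obstruction \emph{not} generated by the four generic families: it is metric, its five odd triangles all satisfy the $K_1$, $K_2$ bounds individually, and no $C_0$/$C_1$-cycle is homomorphically present, yet the pentagon still fails to extend to $\mathcal A^\delta_{K_1,K_2,C_0,C_1}$. I would isolate this by direct parameter analysis at the boundary between the admissibility regions and verify that once this pentagon is added, the completion described above succeeds. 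Conversely, for cases (1) and (2) I would argue that no analogous exceptional cycle exists: in case (1) the finer $C$-cycle family merges the $C_0$ and $C_1$ obstructions, while in case (2) either $\delta>5$ leaves enough room in the completion or the constraints of~\ref{III} preclude the problematic configuration outright. Throughout, I would use the existence of \emph{some} finite forbidden family established in~\cite{Aranda2017} as a safety net and a starting point, then prune it down to match the explicit description in Definition~\ref{defn:forbcycles}; the pruning, together with the pentagon analysis, is where the bulk of the case-by-case bookkeeping lies.
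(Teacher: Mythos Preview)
Your proposal takes a genuinely different route from the paper, and the completeness direction has a real gap.

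The paper's proof is organised around the \emph{magic completion algorithm} of~\cite{Aranda2017} and its correctness (Theorem~\ref{thm:magic}). The reduction is Lemma~\ref{lem:aboutf}: it suffices to check that (i) the triangles in $\mathcal F$ are exactly the forbidden triangles, (ii) $\mathcal F$ is closed under forward steps of the algorithm, and (iii) $\mathcal F$ is closed under \emph{inverse} steps. Sections~4--10 then verify (ii) and (iii) family by family, using structural lemmas (every $d_i$ is large, every $x_j$ is small, every non-triangle cycle has ``tension'' so the fork producing $M$ is never the first to fire). Both inclusions thus go through the \emph{same} canonical completion: soundness by running it forward on a cycle of $\mathcal F$ and watching it land on a forbidden triangle; completeness by backtracking from the forbidden triangle in the failed magic completion of an arbitrary $\str G$ and watching the preimage stay inside $\mathcal F$.

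Your soundness argument (shrinking inside an arbitrary metric completion) is a reasonable alternative to (ii), and with enough care it can be made to work; but your claim that contracting an odd-perimeter triangle ``yields the contradiction immediately'' is not justified without first proving the structural facts the paper isolates (e.g.\ Lemma~\ref{lem:structure:k2III} forces the contracted edge to be the short side, which is what makes the $K_2$ bound bite). So this direction needs essentially the same case analysis as the paper's forward-closure lemmas, not less.

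The real gap is completeness. You propose to define $d(u,v)$ from shortest even/odd walks and then verify by hand that the result lies in $\mathcal A^\delta_{K_1,K_2,C_0,C_1}$. That is not a pruning of the finite family from~\cite{Aranda2017}; it is a reinvention of a completion procedure and a proof of its correctness, which is precisely the hard content of~\cite{Aranda2017} that the present paper treats as a black box. The point of the inverse-step closure (iii) is that it replaces this global construction by a local, edge-by-edge check against the specific $\oplus$ operation, and that check is what occupies Sections~4--10 (including the delicate time-function arguments that rule out the wrong fork being used). Your sketch gives no mechanism for showing that the walk-based labels avoid all forbidden triangles simultaneously, and the ``safety net'' remark does not supply one.
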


\section{The magic completion algorithm}
In the proof we shall rely on some results of~\cite{Aranda2017} which are briefly presented in this section, namely on the magic completion algorithm. The presentation in this paper will be somewhat different (although equivalent) from the presentation in~\cite{Aranda2017} because of different goals --- in~\cite{Aranda2017} the presentation was optimized for proving the correctness of the magic completion algorithm whereas here we want to apply it.

For a $\delta$-edge-labelled graph $\str G = (V, E, \ell)$ we say that a metric space $\str M = (M, d) \in \mathcal A^\delta_{K_1,K_2,C_0,C_1}$ is its \emph{completion} if $V=M$ and $d|_E = \ell$. The magic completion algorithm is an explicit way of looking for a completion of a given $\delta$-edge-labelled graph by setting the length of each missing edge to be as close to some \emph{magic parameter M} as possible.

The following definition is a merge of Definitions~4.3 and~4.4 from~\cite{Aranda2017}.
\begin{definition}[Magic distances]
\label{defn:magicdistance}
Let $M\in\{1,2,\ldots, \delta\}$ be a distance. We say that $M$ is {\em magic} (with respect to $\mathcal A^\delta_{K_1,K_2,C_0,C_1}$) if $$\max\left(K_1, \left\lceil\frac{\delta}{2}\right\rceil\right) \leq M \leq \min\left(K_2,\left\lfloor\frac{C-\delta-1}{2}\right\rfloor\right)$$
and further $M$ satisfies the following two extra conditions:
\begin{enumerate}
\item If the parameters satisfy Case~\ref{III} with $K_1+2K_2 = 2\delta - 1$, then $M>K_1$;
\item if the parameters satisfy Case~\ref{III} and further $C'>C+1$ and $C=2\delta+K_2$, then $M<K_2$.
\end{enumerate}
\end{definition}
\begin{observation}[Lemma~4.2 in~\cite{Aranda2017}]
For every admissible $(\delta,\allowbreak K_1,\allowbreak K_2,\allowbreak C_0,\allowbreak C_1)$ there is a magic distance.
\end{observation}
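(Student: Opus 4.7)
The goal is to exhibit an integer $M$ in the interval
$$I = \bigl[\max(K_1,\lceil\delta/2\rceil),\; \min(K_2,\lfloor(C-\delta-1)/2\rfloor)\bigr]$$
which, in the two exceptional subcases of~\ref{III}, additionally satisfies $M>K_1$ or $M<K_2$ respectively. My plan is a direct case analysis matching the case distinction of Theorem~\ref{thm:admissible}.

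To see that $I$ is non-empty I would verify the four implied pairwise inequalities between the two lower bounds $\{K_1,\lceil\delta/2\rceil\}$ and the two upper bounds $\{K_2,\lfloor(C-\delta-1)/2\rfloor\}$. The inequality $K_1\leq K_2$ is acceptability, and $\lceil\delta/2\rceil\leq\lfloor(C-\delta-1)/2\rfloor$ follows from $C\geq 2\delta+2$ after separating on the parities of $C$ and $\delta$. The remaining two rely on Cherlin's conditions: in Case~\ref{II} I would substitute $C=2K_1+2K_2+1$ and reduce both to $K_1+K_2\geq\delta$; in Case~\ref{III} I would combine $C>2\delta+K_1$ with $3K_2\geq 2\delta$ and $\delta\geq K_1$.

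The second half of the argument deals with the two additional strictness conditions of Definition~\ref{defn:magicdistance}, which I expect to be the main obstacle. For condition~(1), if $K_1+2K_2=2\delta-1$ then $3K_2\geq 2\delta$ forces $K_1<K_2$, so $K_1+1\leq K_2$; and the accompanying admissibility subclause $C\geq 2\delta+K_1+2$, together with $\delta\geq K_1+1$ (which would itself follow from ruling out $K_1=K_2=\delta$), yields $K_1+1\leq\lfloor(C-\delta-1)/2\rfloor$. For condition~(2), $K_1\leq K_2-1$ follows from $C=2\delta+K_2>2\delta+K_1$, whereas $\lceil\delta/2\rceil\leq K_2-1$ needs a finer check: for generic $\delta$ the bound $K_2\geq\lceil 2\delta/3\rceil$ leaves a comfortable margin, but the borderline low-$\delta$ instances (particularly $\delta=3$) must be eliminated by showing that no $C'$ of the correct parity in $(C+1,3\delta+2]$ exists under the other Case~\ref{III} subconditions. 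These degenerate parameter exclusions are where the proof has its real content; the rest is routine rearrangement.
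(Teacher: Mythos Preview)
The paper does not actually provide its own proof of this observation: it is stated purely as a citation of Lemma~4.2 in~\cite{Aranda2017}, with no argument reproduced. There is therefore nothing in this paper to compare your attempt against.

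That said, your sketch is sound. The four pairwise inequalities for non-emptiness of $I$ all check out under the admissibility hypotheses exactly as you outline, and your treatment of the two strictness conditions is correct in spirit. One point you leave implicit is that conditions~(1) and~(2) of Definition~\ref{defn:magicdistance} can be triggered \emph{simultaneously} (e.g.\ $\delta=5$, $K_1=1$, $K_2=4$, $C_0=14$, $C_1=17$), in which case you need $K_1<M<K_2$ rather than just one strict inequality. Fortunately your two separate arguments already yield all four required cross-inequalities: $K_1+1\le K_2-1$ follows from $K_2\ge K_1+2$ (forced by combining $K_1+2K_2=2\delta-1$ with the clause $C\ge 2\delta+K_1+2$ and $C=2\delta+K_2$), while $K_1+1\le\lfloor(C-\delta-1)/2\rfloor$, $\lceil\delta/2\rceil\le K_2-1$, and $\lceil\delta/2\rceil\le\lfloor(C-\delta-1)/2\rfloor$ are exactly the bounds you established individually. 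So the shrunken interval is still non-empty and the proof goes through; it would be worth saying this explicitly. Your remark that the $\delta=3$ boundary for condition~(2) must be disposed of by a parity/range check on $C'$ is also accurate.
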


Magic distances are the safe distances towards which it is possible to optimize in the magic completion algorithm, which we shall present now, but in a different manner than~\cite{Aranda2017}, inspired by the work of the first and third authors on generalised metric spaces~\cite{Hubicka2017sauer,Konecny2018b}.

\begin{definition}[Magic semigroup]
Fix an admissible sequence of parameters $(\delta,\allowbreak K_1,\allowbreak K_2,\allowbreak C_0,\allowbreak C_1)$ and a magic distance $M$. Put $C=\min(C_0, C_1)$. Then define the commutative operation $\oplus\colon [\delta]^2 \rightarrow [\delta]$ as follows:
$$x \oplus y =
  \begin{cases}
    |x-y| & \text{if } |x-y| > M \\
    \min\left(x+y, C-1-x-y\right) & \text{if } \min\left(\ldots\right) < M \\
    M & \text{otherwise}.\end{cases}$$
\end{definition}
It can be proved that $\oplus$ is associative, but we shall not need it.

We say that the triple of vertices $u,v,w$ is a \emph{fork} if the distances between $u$ and $v$ and $v$ and $w$ are defined, while the distance $uw$ is not defined. If $d(u,v)=a$ and $d(v,w)=b$, we also say that $a,b$ is a fork.

If $x\oplus y = x+y$, we say that $x,y$ is completed by the \emph{$d^+$-fork}, if $x\oplus y = |x-y|$, we say that $x,y$ is completed by the \emph{$d^-$-fork} and if $x\oplus y = C-1-x-y$, we say that $x,y$ is completed by the \emph{$d^C$-fork}.

The following fact summarizes several properties of $\oplus$. We shall use these properties implicitly throughout the paper.
\begin{fact}\label{fact:oplus}\leavevmode
\begin{enumerate}
\item $M\oplus x = M$ for all $1\leq x\leq \delta$;
\item if the parameters come from Case~\ref{III}, then $C-1-x-y \geq K_1$;
\item if the parameters come from Case~\ref{IIb}, then $C-1$ is even and thus unless $a\oplus b = M$, the parity of $a\oplus b$ is always the same as the parity of $a+b$;
\item if the parameters come from Case~\ref{III} and $C' > C+1$, then $C-1-x-y\geq K_2-1 \geq M$ (from the extra condition on $M$), hence the $d^C$ fork is never used;
\item if the parameters come from Case~\ref{IIb}, then $C-1-x-y = K_2-1 = M-1$, so the $d^C$ fork is only used for $\delta\oplus\delta = M-1$.
\end{enumerate}
\end{fact}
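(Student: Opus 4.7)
The plan is to treat the five items separately, since each is a short calculation unwinding the definition of $\oplus$ against the numerical constraints collected in the definitions of admissibility (Theorem~\ref{thm:admissible}) and of magic distance (Definition~\ref{defn:magicdistance}). Nothing here requires a deep argument: the content of the fact is really the verification that the numerical bounds placed on $M$ are chosen exactly so that each clause of the definition of $\oplus$ behaves predictably.

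For item (1), I would check that when one of the arguments is $M$ neither the $|x-y|>M$ branch nor the $\min(\ldots)<M$ branch can trigger. From $M\ge\lceil\delta/2\rceil$ we get $|M-x|\le\max(M,\delta-M)\le M$; from $M\le\lfloor(C-\delta-1)/2\rfloor$ we get $C-1-M-x\ge C-1-M-\delta\ge M$, and $M+x\ge M$ is trivial. So both conditions fail and $M\oplus x=M$. Item (2) is immediate: any $x,y\in[1,\delta]$ satisfy $C-1-x-y\ge C-1-2\delta$, and Case~\ref{III} gives $C\ge 2\delta+K_1+1$, i.e., $C-1-2\delta\ge K_1$. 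For item (3) I would observe that in Case~\ref{II} we always have $C=2K_1+2K_2+1$, so $C-1$ is even; then since each of $|x-y|$, $x+y$, and $C-1-x-y$ has the same parity as $x+y$, the parity claim follows from the definition of $\oplus$.

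For item (4), I would use the last bullet of Case~\ref{III} in Theorem~\ref{thm:admissible}: when $C'>C+1$ we are given $C\ge 2\delta+K_2$, whence $C-1-x-y\ge K_2-1$. The second inequality $K_2-1\ge M$ must be split into two sub-cases: if $C>2\delta+K_2$ then $C-1-x-y\ge K_2\ge M$ directly (since $M\le K_2$ by definition); if $C=2\delta+K_2$ then condition~(2) in Definition~\ref{defn:magicdistance} was designed precisely to force $M<K_2$, so again $M\le K_2-1\le C-1-x-y$. Either way the $d^C$ fork never gives a value strictly below $M$ and hence is not taken. Item (5) is the cleanest arithmetic: Case~\ref{IIb} gives $K_1=K_2$ and $3K_2=2\delta-1$, so plugging into $\max(K_1,\lceil\delta/2\rceil)\le M\le\min(K_2,\lfloor(C-\delta-1)/2\rfloor)$ pins down $M=K_1=K_2$, while $C-1-x-y=4K_2-x-y\ge 4K_2-2\delta=K_2-1=M-1$, with equality exactly when $x=y=\delta$.

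The only place where I expect to have to be careful is item (4), where the bound $K_2-1\ge M$ is not automatic from the plain inequality $M\le K_2$ and must be extracted from the extra condition~(2) in Definition~\ref{defn:magicdistance} in the boundary case $C=2\delta+K_2$. Everything else is bookkeeping: item (1) is a direct application of the two main bounds on $M$; items (2) and (3) use only the defining relations of Cases~\ref{II} and~\ref{III}; and item (5) is a single substitution once one notices that $K_1=K_2$ collapses the bounds on $M$ to a single admissible value.
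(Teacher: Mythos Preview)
Your argument is correct. The paper itself gives no proof of this fact at all --- it simply states the five items and remarks that ``We shall use these properties implicitly throughout the paper.'' So there is no proof in the paper to compare yours against; you have supplied exactly the routine verifications the authors chose to omit.

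One small comment on presentation: in item~(4) you are right to split into the two sub-cases $C>2\delta+K_2$ and $C=2\delta+K_2$. The paper's phrasing ``$C-1-x-y\ge K_2-1\ge M$'' is slightly loose, since the middle inequality $K_2-1\ge M$ need not hold when $C>2\delta+K_2$ (only $K_2\ge M$ is guaranteed there); your case split shows that the conclusion $C-1-x-y\ge M$ holds regardless, which is what actually matters for the $d^C$ fork never being used. In item~(5) you correctly observe that $K_1=K_2$ alone forces $M=K_1=K_2$ from the sandwich $K_1\le M\le K_2$, so the remaining bounds on $M$ do not need to be checked separately.
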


The magic completion algorithm runs in stages. It orders the distances $\{1,\ldots,\delta\}$ in a particular order as $d_1, \ldots, d_\delta$ and in the $i$-th stage it looks at each fork $x,y,z$ and if $d(x,y)\oplus d(x,z) = d_i$, then it sets $d(y,z)=d_i$. Before stating this formally, we need to present the correct order $d_1, \ldots, d_\delta$.

\begin{definition}[Time function and the magic permutation]
Assume that an admissible sequence of parameters $(\delta,\allowbreak K_1,\allowbreak K_2,\allowbreak C_0,\allowbreak C_1)$ and a magic distance $M$ are fixed. Then define the function $t\colon [\delta] \rightarrow \mathbb N\cup \{\infty\}$ by
$$t(x) =
  \begin{cases}
    2x+1 & \text{if } x<M \\
    2(\delta-x) & \text{if } x > M \\
    \infty & \text{if } x = M.\end{cases}$$
Using this function define the permutation $d_1,\ldots,d_\delta$ of distances $1,\ldots,\delta$ by $t(d_i) \leq t(d_j)$ if and only if $i\leq j$. We will call it the \emph{magic permutation}.
\end{definition}
So, typically (for $\delta$ large enough and $M$ small enough) we will have $d_1 = \delta, d_2 = 1, d_3 = \delta-1, d_4 = 2, \ldots, d_\delta = M$. Now we are ready to state the magic completion algorithm explicitly:
\begin{definition}[The magic completion algorithm]
Assume that an admissible sequence of parameters $(\delta,\allowbreak K_1,\allowbreak K_2,\allowbreak C_0,\allowbreak C_1)$ and a magic distance $M$ are fixed. Let $d_1, d_2, \ldots, d_\delta$ be the corresponding magic permutation.

For a $\delta$-edge-labelled graph $\str G = (V, E, \ell) \in \mathcal G^\delta$ define the sequence $\str G = \str G_0 \subseteq \str G_1 \subseteq \ldots \subseteq \str G_\delta$ of $\delta$-edge-labelled graphs with the vertex set $V$ such that $E(\str G_i) \subseteq E(\str G_{i+1})$, $E(\str G_\delta) = {V\choose 2}$ and $\ell_{\str G_{i+1}}|_{E(\str G_i)} = \ell_{\str G_i}$. We use the following induction rule for $i=0,1,\ldots\delta-1$:

Given $\str G_i = (V, E_i, \ell_i)$, we look at each non-edge $xy$ of $\str G_i$ and each vertex $z\in V$ such that $xz\in E_i$ and $yz\in E_i$. If $\ell_i(x,z)\oplus\ell_i(y,z) = d_{i+1}$, we set $\ell_{i+1}(x,y) = \ell_{i+1}(y,x) = d_{i+1}$. Then, of course, for every $xy\in E_i$ we set $\ell_{i+1}(x,y) = \ell_{i+1}(y,x) = \ell_i(x,y)$. And finally we let $E_{i+1}$ be the set of pairs where $\ell_{i+1}$ is defined.

We say that $\str G_\delta$ is the \emph{magic completion (with parameter $M$) of $\str G$}.
\end{definition}

The following theorem is a crucial result of~\cite{Aranda2017}:
\begin{theorem}[\cite{Aranda2017}]\label{thm:magic}
A $\delta$-edge-labelled $\str G\in \mathcal G^\delta$ is in $\mathcal G^\delta_{K_1,K_2,C_0,C_1}$ if and only if the magic completion of $\str G$ is in $\mathcal A^\delta_{K_1,K_2,C_0,C_1}$.
\end{theorem}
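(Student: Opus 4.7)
The plan is to prove the two implications separately. The backward direction is immediate: by construction, the magic completion is a complete $\delta$-edge-labelled graph whose label function extends $\ell$, so its membership in $\mathcal A^\delta_{K_1,K_2,C_0,C_1}$ witnesses that $\str G \in \mathcal G^\delta_{K_1,K_2,C_0,C_1}$ by definition.

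For the forward direction, I would argue by induction on the stage $i$ that the invariant \emph{$\str G_i$ has a completion in $\mathcal A^\delta_{K_1,K_2,C_0,C_1}$} is preserved. The base case $i=0$ is the hypothesis $\str G \in \mathcal G^\delta_{K_1,K_2,C_0,C_1}$, and since $\str G_\delta$ is complete by construction, the invariant at $i = \delta$ forces $\str G_\delta$ itself to lie in $\mathcal A^\delta_{K_1,K_2,C_0,C_1}$, which is the conclusion of the theorem.

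For the inductive step I would argue as follows. Suppose $\str G_i$ has a completion $\str M$ in the class. For each non-edge $uv$ of $\str G_i$ that receives the label $d_{i+1}$ at stage $i+1$ there is a witness fork $u,z,v$ with $\ell(uz) \oplus \ell(vz) = d_{i+1}$, and the three branches of the definition of $\oplus$ correspond to three geometric reasons: the $d^-$-fork (where $|a-b|>M$) forces $d(uv) = |a-b|$ via the triangle inequality, combined with the observation that any strictly larger value would have been the already-processed distance $d_j$ for some $j\leq i$; dually the $d^+$- and $d^C$-forks force $d(uv)$ from above via the triangle inequality and the $C_0, C_1$ cycle bounds; in the remaining case the value $M$ is a consistent choice thanks to the defining inequalities of a magic distance. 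Consistency across different witness forks for the same non-edge uses the fact that the magic permutation processes distances in the order $\delta, 1, \delta-1, 2, \ldots, M$, so that by the time $d_{i+1}$ is handled, all strictly more extreme distances are already in place.

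The main obstacle is the case analysis within the inductive step: one must verify that the algorithm never creates a forbidden triangle (non-metric, $K_1$, $K_2$, $C_0$, or $C_1$) by combining the newly added labels with the existing ones. This splits along the admissibility cases of Theorem~\ref{thm:admissible}, namely~\ref{II} (with subcases~\ref{IIa} and~\ref{IIb}) and~\ref{III}, and uses the properties collected in Fact~\ref{fact:oplus}. The extra conditions imposed on $M$ in Definition~\ref{defn:magicdistance} (for instance $M > K_1$ in Case~\ref{III} with $K_1+2K_2 = 2\delta - 1$, or $M < K_2$ in Case~\ref{III} with $C' > C+1$ and $C = 2\delta + K_2$) are precisely calibrated to exclude the boundary configurations where consistency would otherwise fail, so handling these corner cases is where the bulk of the technical work would lie.
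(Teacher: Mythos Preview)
The paper does not contain a proof of this theorem: it is quoted verbatim from \cite{Aranda2017} and used as a black box (see the sentence immediately following the theorem statement, and its application inside the proof of Lemma~\ref{lem:aboutf}). So there is no ``paper's own proof'' to compare your proposal against.

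That said, your outline has a genuine gap in the inductive step. Your invariant is ``$\str G_i$ admits a completion $\str M$ in $\mathcal A^\delta_{K_1,K_2,C_0,C_1}$'', and you try to propagate it by arguing that the fork value $d_{i+1}$ is \emph{forced} on $\str M$. This is not correct. For instance, in the $d^-$ case the triangle inequality only gives $d_{\str M}(u,v)\geq |a-b|$; nothing prevents $d_{\str M}(u,v)$ from being strictly larger than $|a-b|$, and your remark that ``any strictly larger value would have been the already-processed distance $d_j$ for some $j\leq i$'' confuses the algorithm's processing order on $\str G_i$ with constraints on the unrelated object $\str M$. The algorithm never sees $\str M$, so the time ordering says nothing about $d_{\str M}(u,v)$. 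Similarly, in the $a\oplus b=M$ branch the value is certainly not forced. Hence $\str M$ need not be a completion of $\str G_{i+1}$, and the invariant does not propagate in the way you describe.

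A workable version of the argument would instead show directly that if $\str G_i$ contains no forbidden triangle then neither does $\str G_{i+1}$: for every newly labelled edge $uv$ and every third vertex $w$, one checks (by cases on the five triangle constraints and on which of $d^+,d^-,d^C$ produced the new label) that $u,v,w$ is admissible. This is the substantial case analysis you acknowledge in your final paragraph, and it is essentially what \cite{Aranda2017} carries out; but it is not the same as the forcing argument you sketch, and the latter cannot be repaired without changing the invariant.
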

We shall use the contrapositive version of Theorem~\ref{thm:magic}, namely that if the magic completion of $\str G$ is not in $\mathcal A^\delta_{K_1,K_2,C_0,C_1}$, then $\str G \notin \mathcal G^\delta_{K_1,K_2,C_0,C_1}$, i.e. there is no completion of $\str G$ to $\mathcal A^\delta_{K_1,K_2,C_0,C_1}$.

\section{Proof strategy}
We want to prove that every $\str G\in \mathcal G^\delta \setminus \mathcal G^\delta_{K_1,K_2,C_0,C_1}$ contains a homomorphic image of a member of $\mathcal F$ as described in Theorem~\ref{thm:main}. To achieve this, we will take the magic completion of such a $\str G$. By the assumption, the completion is not in $\mathcal A^\delta_{K_1,K_2,C_0,C_1}$, hence contains a forbidden triangle. Then we shall look at the run of the magic completion algorithm and extract a non-completable witness in $\str G$ and observe that it is a homomorphic image of a member of $\mathcal F$.\footnote{This is the same strategy as used in~\cite{Aranda2017} to prove that $\mathcal F$ is finite, here we shall do a finer analysis to get an explicit description.}

Having guessed the family $\mathcal F$, we need to prove three things: That the triangles forbidden in $\mathcal A^\delta_{K_1,K_2,C_0,C_1}$ are precisely the triangles in $\mathcal F$, that $\mathcal F$ is closed on the steps of the magic completion algorithm and that $\mathcal F$ is closed on the inverse steps of the magic completion algorithm.

\begin{definition}[Steps and inverse steps of the magic completion algorithm]
Assume that an admissible sequence of parameters $(\delta,\allowbreak K_1,\allowbreak K_2,\allowbreak C_0,\allowbreak C_1)$ and a magic distance $M$ are fixed.

Let $\str C$ be a cycle with distances $c_1, c_2, \ldots, c_k$ such that edges of lengths $c_i$ and $c_{i+1}$ share a vertex, where we identify $c_{k+1}=c_1$ (that is, $\str C = (c_1,\ldots,c_k)$). Let $i$ be the smallest $i$ such that there is some $1\leq j\leq k$ with $c_j\oplus c_{j+1} = d_i$, that is, the first stage of the magic completion algorithm where something would happen with $\str C$.

Take an arbitrary $1\leq j \leq k$ with $c_j\oplus c_{j+1} = d_i$ and let $\str C'$ be the cycle with edges of lengths $c_1, \ldots, c_{j-1}, d_i, c_{j+2}, \ldots, c_k$ in this cyclic order. Then we say that one can \emph{get $\str C'$ from $\str C$ by a step of the magic completion algorithm} and that one can \emph{get $\str C$ from $\str C'$ by an inverse step of the magic completion algorithm}.
\end{definition}

\begin{lemma}\label{lem:aboutf}
Assume that an admissible sequence of parameters $(\delta,\allowbreak K_1,\allowbreak K_2,\allowbreak C_0,\allowbreak C_1)$ and a magic distance $M$ are fixed. Let $\mathcal F$ be a family of $\delta$-edge-labelled cycles such that the following holds:
\begin{enumerate}
\item The three-vertex members of $\mathcal F$ are precisely the triangles forbidden in $\mathcal A^\delta_{K_1,K_2,C_0,C_1}$;
\item for every $\str C\in \mathcal F$ and every $\str C'$ such that one can get $\str C'$ from $\str C$ by a step of the magic completion algorithm it holds that $\str C'\in \mathcal F$ (i.e. $\mathcal F$ is \emph{closed on the steps of the magic completion algorithm}); and
\item for every $\str C\in \mathcal F$ and every $\str C'$ such that one can get $\str C'$ from $\str C$ by an inverse step of the magic completion algorithm it holds that $\str C'\in \mathcal F$ (i.e. $\mathcal F$ is \emph{closed on the inverse steps of the magic completion algorithm}).
\end{enumerate}
Then $\mathcal G^\delta_{K_1,K_2,C_0,C_1} = \Forb(\mathcal F)$. In other words, $\mathcal G^\delta_{K_1,K_2,C_0,C_1}$ is the subclass of $\mathcal G^\delta$ containing those graphs such that there is no homomorphism from a member of $\mathcal F$ to them.
\end{lemma}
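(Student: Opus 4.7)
The plan is to prove both inclusions of $\mathcal G^\delta_{K_1,K_2,C_0,C_1} = \Forb(\mathcal F)$ separately, in each direction exploiting the magic completion algorithm together with the three closure conditions on $\mathcal F$. A common preliminary, proved by induction on $|\str F|$, is that no $\str F\in\mathcal F$ is completable to $\mathcal A^\delta_{K_1,K_2,C_0,C_1}$: for a triangle this is condition~(1); for a longer cycle, one step of the magic completion algorithm produces a shorter cycle which, by condition~(2), is still in $\mathcal F$ and, by the induction hypothesis, not completable, so the magic completion of $\str F$ contains a forbidden triangle and Theorem~\ref{thm:magic} concludes.

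For the inclusion $\mathcal G^\delta_{K_1,K_2,C_0,C_1}\subseteq \Forb(\mathcal F)$, suppose toward contradiction that $f\colon \str F\to \str G$ is a homomorphism with $\str F\in \mathcal F$ and $\str G\in \mathcal G^\delta_{K_1,K_2,C_0,C_1}$. If $f$ is injective, the image of $\str F$ is a substructure of any completion $\str M\in \mathcal A^\delta_{K_1,K_2,C_0,C_1}$ of $\str G$, and the restriction of $\str M$ to the vertices of $f(\str F)$ is a completion of $\str F$ by heredity, contradicting the preliminary. If $f$ is not injective, I would work inside the $\Fraisse$ limit $\str U$ of the amalgamation class $\mathcal A^\delta_{K_1,K_2,C_0,C_1}$ and iteratively blow up each vertex identified by $f$ into the requisite number of distinct copies, choosing at each step, via the amalgamation guaranteed by Theorem~\ref{thm:admissible}, a valid pairwise distance; this produces an injective embedding of $\str F$ into $\str U$ and hence a completion of $\str F$, again contradicting the preliminary.

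For the inclusion $\Forb(\mathcal F)\subseteq \mathcal G^\delta_{K_1,K_2,C_0,C_1}$ I argue contrapositively. Let $\str G\notin \mathcal G^\delta_{K_1,K_2,C_0,C_1}$; by Theorem~\ref{thm:magic}, the magic completion $\str G^*$ contains a triangle $\str T$ forbidden in $\mathcal A^\delta_{K_1,K_2,C_0,C_1}$, so by condition~(1), $\str T\in \mathcal F$. I then reverse the run of the algorithm by undoing its steps in opposite order: repeatedly replace a chord of the current cycle, starting with the chord added last by the algorithm, by the two edges of its witnessing fork. Each such replacement is exactly an inverse step of the magic completion algorithm, so iterated application of condition~(3) keeps the cycle in $\mathcal F$ throughout. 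The process terminates with a cycle $\str C\in \mathcal F$ whose edges are all original edges of $\str G$, so the identity on its vertex set is a homomorphism $\str C\to \str G$, witnessing $\str G\notin \Forb(\mathcal F)$. The main technical obstacle is the blow-up step in the non-injective case of the first inclusion: a valid pairwise distance between copies must be chosen compatibly with the distances already present in the ambient structure, which is what Cherlin's admissibility conditions (Theorem~\ref{thm:admissible}) make possible.
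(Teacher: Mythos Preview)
Your argument is correct and follows the same two-direction skeleton as the paper: condition~(2) shows members of $\mathcal F$ are non-completable, and the backtracking via condition~(3) shows any non-completable $\str G$ receives a homomorphism from some member of $\mathcal F$. The paper handles the first inclusion differently, though: rather than dealing with non-injective homomorphisms head-on, it quotes Lemma~4.18 of~\cite{Aranda2017}, which already gives $\mathcal G^\delta_{K_1,K_2,C_0,C_1}=\Forb(\mathcal O)$ for \emph{some} family of cycles $\mathcal O$. Then the fact that every $\str F\in\mathcal F$ is non-completable yields $\Forb(\mathcal O)\subseteq\Forb(\mathcal F)$ immediately (any $\str F\in\mathcal F$ is outside $\Forb(\mathcal O)$, so receives a hom from some $\str O\in\mathcal O$, and composition gives the inclusion), with no need to unpick non-injective maps.

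Your blow-up argument is a legitimate alternative, but as written it is underspecified in one respect: Theorem~\ref{thm:admissible} only asserts amalgamation, whereas splitting a vertex into several copies with prescribed distances to a fixed finite set requires \emph{strong} amalgamation. This does hold here---the classes under consideration are primitive, and in fact the magic completion itself furnishes a strong amalgam since it never assigns distance~$0$---but you should say so. (A more concrete version of the blow-up: send each fibre-internal distance to~$M$; however one must then check that every triangle of the form $(a,a,M)$ is allowed, which is not entirely trivial, so the strong-amalgamation route in the \Fraisse{} limit is cleaner.)

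Two minor points. First, in your backtracking, the cycle $\str C$ you obtain need not sit injectively in $\str G$ (the unfolded forks may revisit vertices), so ``the identity on its vertex set'' should read ``the natural map''; the paper accordingly speaks of a ``homomorphic image of a cycle in $\str G$''. Second, for the replacement of a chord by its witnessing fork to genuinely be an \emph{inverse step} in the sense of the definition (which fixes the minimal stage~$i$), one must always expand an edge of maximal addition stage; you do say ``starting with the chord added last'', which is exactly the right bookkeeping---the paper defers this detail to the proof of Lemma~4.18 in~\cite{Aranda2017}.
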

\begin{proof}
By Lemma~4.18 in~\cite{Aranda2017} there is a finite family of $\delta$-edge-labelled cycles $\mathcal O$ such that $$\mathcal G^\delta_{K_1,K_2,C_0,C_1} = \Forb(\mathcal O).$$ We shall prove that no member of $\mathcal F$ has a completion in $\mathcal A^\delta_{K_1,K_2,C_0,C_1}$ and that whenever a cycle $\str C$ has no completion in $\mathcal A^\delta_{K_1,K_2,C_0,C_1}$, then $\str C\in \mathcal F$. This implies $\mathcal F=\mathcal O$.

Take an arbitrary $\str C\in \mathcal F$ and take its magic completion. If one looks at the run of the magic completion algorithm, it consists of many steps of the magic completion algorithm run in parallel. By focusing on just one \textit{thread} (i.e. in each stage we only add such edges that we end up with a smaller cycle with some triangles cut out), it is easy to see that it is just a sequence of steps of the magic completion algorithm. As $\mathcal F$ is closed on them, we eventually arrive to a triangle from $\mathcal F$, but it is, by the assumption, forbidden in $\mathcal A^\delta_{K_1,K_2,C_0,C_1}$, hence $\str C$ has no completion in $\mathcal A^\delta_{K_1,K_2,C_0,C_1}$ (using Theorem~\ref{thm:magic}).

Now take an arbitrary $\str G\in \mathcal G^\delta \setminus \mathcal G^\delta_{K_1,K_2,C_0,C_1}$ and let $\str G'$ be its magic completion. As $\str G\in \mathcal G^\delta \setminus \mathcal G^\delta_{K_1,K_2,C_0,C_1}$, there is a forbidden triangle in $\str G'$ and by our assumption this triangle is in $\mathcal F$. In the same way as in the proof of Lemma~4.18 in~\cite{Aranda2017}, we can backtrack the run of the magic completion algorithm monitoring which \textit{forks} caused the forbidden triangle to appear in $\str G'$ and eventually we arrive to a homomorphic image of a cycle in $\str G$ with no completion in $\mathcal A^\delta_{K_1,K_2,C_0,C_1}$. As $\mathcal F$ is closed on the inverse steps of the magic completion algorithm, this witness is going to be in $\mathcal F$, which is what we wanted to prove.
\end{proof}

By simply checking the definition of $\mathcal F$, one can see that the first point of Lemma~\ref{lem:aboutf} holds: $K_1$-forbidden triangles are the 3-vertex $K_1$-cycles, $K_2$-forbidden triangles are the 3-vertex $K_2$-cycles, non-metric triangles are the 3-vertex $C_0$- and $C_1$-cycles (or $C$-cycles) with $n=0$ and $C_0$- resp. $C_1$-triangles are the 3-vertex  $C_0$- and $C_1$-cycles respectively (or, together, $C$-cycles) with $n=1$.

To prove closedness of $\mathcal F$ on steps and inverse steps of the magic algorithm, we need to do some case-work and separately for each type of forbidden cycle (and often even separately for different cases of admissible parameters) check that indeed both the steps and inverse steps of the magic completion algorithm produce a cycle from $\mathcal F$ when run on the given type of forbidden cycle.

When analyzing the inverse steps, edges of length $M$ are quite problematic, because a lot of different pairs of distances $\oplus$-sum up to $M$ (including, say, $M\oplus M = M$). In~\cite{Aranda2017} this was dealt with by the following observation
\begin{lemma}[Lemma 4.4 in~\cite{Aranda2017}]\label{lem:tension3}
Let $\str{G}\in \mathcal G^\delta$ and $\overbar{\str{G}}$ be its completion with magic parameter $M$. If there is a forbidden triangle (w.r.t. $\mathcal A^\delta_{K_1,K_2,C_0,C_1}$) or a triangle with perimeter at least $C$ in $\overbar{\str{G}}$ with an edge of length $M$, then this edge is also present in $\str{G}$.
\end{lemma}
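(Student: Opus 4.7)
The plan is to argue by contradiction. Suppose that the triangle $uvw$ in $\overbar{\str{G}}$ is either forbidden in $\mathcal A^\delta_{K_1,K_2,C_0,C_1}$ or has perimeter at least $C$, with $d(u,v)=M$, but that $uv\notin E(\str{G})$. Since $t(M)=\infty$ is the largest value of the time function, we have $d_\delta=M$, so the edge $uv$ must have been added during the final stage of the algorithm. Hence there exists a vertex $z$ such that $uz,vz\in E(\str{G}_{\delta-1})$ with labels $a'=d(u,z)$ and $b'=d(z,v)$ satisfying $a'\oplus b'=M$. Since only the ``otherwise'' branch of $\oplus$ can return $M$, we have $|a'-b'|\leq M$, $a'+b'\geq M$, and $a'+b'\leq C-1-M$; in particular the triangle $uzv$ is metric with perimeter at most $C-1$.

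I then proceed by cases on the labels $a=d(u,w)$ and $b=d(v,w)$. If $a=b=M$, the triangle $(M,M,M)$ is trivially metric, has perimeter $3M<C$ (using $C\leq 3\delta+2$ together with $2M\leq C-\delta-1$), and the $K_1$- and $K_2$-conditions are ruled out by $K_1\leq M\leq K_2$. If exactly one of $a,b$ equals $M$, say $a=M$ and $b\neq M$, then $(M,M,b)$ is metric since $2M\geq\delta\geq b$, has perimeter $\leq 2M+\delta\leq C-1$, and one checks directly that neither $K$-condition can be violated. If neither $a$ nor $b$ equals $M$, both $uw$ and $vw$ are already present in $\str{G}_{\delta-1}$, so the fork $u,w,v$ is examined at each of the stages $\max(s_a,s_b)+1,\dots,\delta$, where $s_x$ denotes the stage at which distance $x$ is added by the magic permutation; the fact that $uv$ remained unlabelled until stage $\delta$ then pins down $a\oplus b=M$, and the same computation as for $(a',b')$ shows that $(M,a,b)$ is metric with perimeter at most $C-1<C$.

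The main obstacle lies in ruling out $K_2$-forbiddenness in each of the three cases, since a triangle is $K_2$-forbidden precisely when its perimeter is odd and at least $2K_2+2m$ with $m$ the smallest side, and this does not follow directly from the perimeter bound $p\leq C-1$. Here a careful split is needed according to Cherlin's admissibility cases: in Case~\ref{II} one uses the identity $C=2K_1+2K_2+1$, while in Case~\ref{III} one uses the inequalities $K_1+2K_2\geq 2\delta-1$ and $3K_2\geq 2\delta$ together with the two additional technical conditions on $M$ imposed in Definition~\ref{defn:magicdistance}, which were introduced precisely to rule out the boundary sub-cases where a naive bound would fail. The remaining verification is a mechanical, if somewhat tedious, case split according to which of $M,a,b$ realises the smallest side $m$.
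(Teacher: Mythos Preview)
First, note that the paper does not itself prove this lemma; it is quoted from \cite{Aranda2017} (as Lemma~4.4 there), so there is no in-paper argument to compare against directly.

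Your overall strategy---showing that a triangle $(M,a,b)$ cannot be forbidden (or have perimeter $\geq C$) when the $M$-edge was produced by the algorithm---is the right one, and Cases~1 and~2 go through essentially as you describe; the deferred $K_2$ verification is indeed routine once one uses $K_1\le M\le K_2$ and the bound $2M\le C-1-\delta$.

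However, Case~3 contains a genuine gap. You assert that since $a,b\neq M$, both $uw$ and $vw$ lie in $\str{G}_{\delta-1}$, so the fork $u,w,v$ is examined at every stage in $\{\max(s_a,s_b)+1,\dots,\delta\}$, and hence $a\oplus b$ must equal $M$. But this only rules out $s_{a\oplus b}\in\{\max(s_a,s_b)+1,\dots,\delta-1\}$; you have not excluded $s_{a\oplus b}\le\max(s_a,s_b)$, i.e.\ $t(a\oplus b)\le\max\bigl(t(a),t(b)\bigr)$. In that situation the fork through $w$ only becomes available \emph{after} the stage at which distance $a\oplus b$ is processed, so the algorithm never had a chance to assign $uv$ the value $a\oplus b$. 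This is not a phantom concern: take $a=\delta$ and $1\le b<\delta-M$ (possible whenever $M\le\delta-2$). Then $a\oplus b=\delta-b>M$, and one computes $t(a)=0$, $t(b)=2b+1$, $t(a\oplus b)=2b<2b+1=t(b)$. Thus $s_{a\oplus b}<s_b\le\max(s_a,s_b)$, and if $vw$ was itself added by the algorithm at stage $s_b$, the fork through $w$ really was unavailable at stage $s_{a\oplus b}$. Note that the corresponding triangle $(M,\delta,b)$ \emph{is} non-metric (since $\delta>M+b$), so the hypothesis of the lemma is met.

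To close this gap you must recurse: if $vw$ was added via some fork $v,z',w$, unfold that fork and argue along the resulting path back to original edges of $\str G$. What ultimately makes the lemma true is the associativity of $\oplus$ together with the fact that edges of label $\delta$ can never be produced by a fork (no $p,q$ satisfy $p\oplus q=\delta$), so the recursion terminates with a path in $\str G$ whose $\oplus$-sum witnesses the correct label for $uv$. Your argument stops one level short of this.
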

Thanks to this lemma combined with the observation that $M\oplus a = M$ for every $a$ one knows that the edges of length $M$ never participate in any inverse steps.

In this paper we need to find a generalisation of this lemma:
\begin{definition}[Tension]
Let $\str{C} \in \mathcal G^\delta$ be a cycle. We say that there is a \emph{$\oplus$-tension} (often just called \emph{tension}) in $\str C$ if there are two neighbouring edges of $\str{C}$ with lengths $a$ and $b$ such that $a\oplus b \neq M$.
\end{definition}
\begin{lemma}
Let $\str{C} \in \mathcal G^\delta$ be a cycle with a tension. Then the following hold:
\begin{enumerate}
\item\label{lem:tension:1} Let $\str{C}'\in\mathcal G^\delta$ be a cycle which one can get from $\str{C}$ by a step of the magic completion algorithm and let $e$ be the newly added edge. Then $e\neq M$.

\item\label{lem:tension:2} Let $\str{C}'\in\mathcal G^\delta$ be a cycle which one can get from $\str{C}$ by an inverse step of the magic completion algorithm and let $e$ be the edge of $\str{C}$ which was replaced by a fork in $\str{C}'$. Then $e\neq M$.
\end{enumerate}
\end{lemma}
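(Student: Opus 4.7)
The plan rests on two facts that are already in the paper: (i) because $t(M)=\infty$, the magic permutation places $M$ last, so $d_\delta=M$ and no earlier $d_i$ equals $M$; and (ii) Fact~\ref{fact:oplus}(1), which states $M\oplus x=M$ for every $x$. Together these say that whenever a pair of neighbouring edges $\oplus$-sums to something other than $M$, it does so at some position $i'<\delta$, and conversely the only way to produce $M$ as a $\oplus$-sum is via an edge that is already $M$.

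For part \eqref{lem:tension:1}, the argument is essentially immediate. A step of the algorithm selects $e=d_i$ for the smallest $i$ such that some neighbouring pair in $\str C$ $\oplus$-sums to $d_i$. By tension, at least one neighbouring pair $(a,b)$ in $\str C$ satisfies $a\oplus b=d_{i'}$ with $i'<\delta$, forcing $i\le i'<\delta$ and hence $e\neq M$.

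For part \eqref{lem:tension:2}, the plan is to reinterpret the inverse step as a \emph{forward} step applied to $\str C'$, so that $e=d_i$ where $i$ is the smallest index such that some neighbouring pair in $\str C'$ $\oplus$-sums to $d_i$. The key combinatorial observation is that the neighbouring pairs of edges in $\str C$ split into two kinds: those not involving $e$, which appear unchanged as neighbouring pairs in $\str C'$, and the two pairs $(p,e)$ and $(e,q)$ that involve $e$. I would then split the argument according to where the tension in $\str C$ resides. If a pair not involving $e$ provides the tension, the argument from part \eqref{lem:tension:1} applied to $\str C'$ gives $e\neq M$ directly. If instead the tension comes from a pair involving $e$, say $p\oplus e\neq M$, then $e=M$ would contradict Fact~\ref{fact:oplus}(1), since it would give $p\oplus e=p\oplus M=M$.

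The only genuine obstacle is bookkeeping: one must verify that the pairs of neighbouring edges of $\str C$ not involving $e$ really are the same pairs (with the same $\oplus$-sums) in $\str C'$, so that tension transfers from $\str C$ to $\str C'$ in the relevant case. Once this is noted, the proof collapses into two lines plus a two-case split; no case analysis on the admissibility types is needed, which is reassuring since the statement itself does not distinguish between Cases~\ref{II} and~\ref{III}.
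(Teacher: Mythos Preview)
Your proposal is correct and takes essentially the same approach as the paper. The paper's proof of part~\eqref{lem:tension:2} compresses your two cases into the single claim ``if $\str C$ has a tension then $\str C'$ has a tension (as $M\oplus a=M$)'' and then invokes part~\eqref{lem:tension:1} with the roles of $\str C$ and $\str C'$ switched; unpacking that claim (via contrapositive) is exactly your case split, with your Case~2 corresponding to the step where $e=q\oplus r=M$ forces the pairs $(p,e)$ and $(e,s)$ to $\oplus$-sum to $M$.
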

\begin{proof}
We prove both points by contradiction. Suppose that $\str{C}$ and $\str{C}'$ give such a contradiction.
\begin{enumerate}
\item Since $\str{C}$ has a tension, there are vertices $u,v,w$ such that $uv$ and $vw$ are edges of $\str{C}$ and $uw$ is not an edge of $\str{C}$ (otherwise $\str{C}$ is a triangle and there is no $\str{C}'$). But then $t(d(u,v) + d(v,w)) < t(M)$ which is a contradiction with the definition of a step of the magic completion algorithm.
\item One can just repeat the previous paragraph with the roles of $\str{C}$ and $\str{C}'$ switched after noticing that if $\str C$ had a tension, then $\str C'$ has a tension (as $M\oplus a=M$ for every $a$).
\end{enumerate}
\end{proof}

For each type of forbidden cycles we prove that it has tension given that it has at least four vertices and then we can simply ignore edges of length $M$ for the inverse steps and forks which $\oplus$-sum to $M$ for the direct steps (for triangles this follows from Lemma~\ref{lem:tension3}).

\section{$K_1$-cycles}
In the whole section we let $\str{C}$ be a $K_1$-cycle with edges $x_1, \ldots, x_k$.

\begin{lemma}\label{lem:structure:k1}
For every $1\leq i\leq k$ it holds that $x_i < K_1$ and hence $x_i < M$.
\end{lemma}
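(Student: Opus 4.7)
The plan is to derive the bound directly from the two defining properties of a $K_1$-cycle: metricity and the perimeter bound $\sum_i x_i < 2K_1$. I would argue by contradiction: suppose some edge, say $x_1$, satisfies $x_1 \geq K_1$. Then using the perimeter bound,
\[
\sum_{j=2}^{k} x_j \;=\; \left(\sum_{j=1}^{k} x_j\right) - x_1 \;<\; 2K_1 - K_1 \;=\; K_1 \;\leq\; x_1,
\]
so $x_1 > \sum_{j\neq 1} x_j$, contradicting the fact that $\str C$ is a metric cycle. Hence every $x_i$ is strictly less than $K_1$.

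For the second half of the conclusion, I would simply invoke Definition~\ref{defn:magicdistance}, which forces $M \geq K_1$. Combined with $x_i < K_1$ this yields $x_i < M$, completing the lemma.

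The main obstacle is essentially cosmetic — one must remember to use the metric cycle condition in its symmetric form ($x_i \leq \sum_{j\neq i} x_j$ for \emph{every} edge, not just one) so that the argument applies uniformly to all $i$. No case analysis on the admissibility type is needed here, since the defining constraint on $K_1$-cycles already bundles the metricity assumption.
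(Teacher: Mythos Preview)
Your argument is correct and essentially identical to the paper's: the paper proceeds directly, using metricity to write $x_i \leq \sum_{j\neq i} x_j$ and hence $2x_i \leq \sum_j x_j < 2K_1$, which is just the contrapositive of your contradiction. The appeal to $M \geq K_1$ from Definition~\ref{defn:magicdistance} for the second conclusion matches the paper as well.
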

\begin{proof}
Take an arbitrary $x_i$. Then, as $\str{C}$ is metric, we have $x_i\leq \sum_{j\neq i} x_j$ and hence $2x_i\leq \sum_i x_i < 2K_1$.
\end{proof}

\begin{lemma}\label{lem:tension:k1}
If $\str C$ has at least 4 vertices, then there is a pair of neighbouring edges $a,b$ such that $a+b < K_1$, hence $\str C$ has a tension. 
\end{lemma}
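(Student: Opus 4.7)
The plan is to find adjacent edges $a,b$ with $a+b<K_1$ by a straightforward averaging argument over consecutive pairs, and then to check that such a pair forces $a\oplus b\neq M$ using Lemma~\ref{lem:structure:k1} and the definition of $\oplus$.

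First I would enumerate the $k$ consecutive (cyclic) pairs $(x_1,x_2),(x_2,x_3),\ldots,(x_k,x_1)$ and observe that every edge $x_i$ appears in exactly two of them. Hence the total of the pair-sums equals $2\sum_{i=1}^k x_i$, which by the $K_1$-cycle inequality is strictly less than $4K_1$. Since $k\geq 4$, if every consecutive pair summed to at least $K_1$, the total would be at least $kK_1\geq 4K_1$, a contradiction. Therefore some pair of neighbouring edges $a,b$ satisfies $a+b<K_1$.

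Next I would verify that this pair witnesses a $\oplus$-tension. By Lemma~\ref{lem:structure:k1} (applied to each of $a$ and $b$), and the inequality $K_1\leq M$ from the definition of a magic distance, we have $a+b<K_1\leq M$, and therefore also $|a-b|\leq a+b<M$. Looking at the three cases in the definition of $\oplus$: the first case $|a-b|>M$ does not occur; in the second case we get $a\oplus b=\min(a+b,C-1-a-b)\leq a+b<M$; and the ``otherwise'' branch (giving value $M$) cannot be reached because $\min(a+b,C-1-a-b)<M$ already. In every case $a\oplus b\neq M$, so $\str{C}$ has a tension as required.

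There is no real obstacle here; the only subtle point is that the hypothesis $k\geq 4$ is exactly what makes the averaging inequality $4K_1\leq kK_1$ go through, so the bound on the number of vertices is used tightly. The resulting pair $(a,b)$ will lie in the ``additive'' branch of $\oplus$, which is what is needed later to control the inverse steps of the magic completion algorithm on $K_1$-cycles.
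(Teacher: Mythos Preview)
Your proof is correct and follows essentially the same averaging argument as the paper: both sum the $k$ consecutive pairs to get $2\sum x_i<4K_1$ and use $k\geq 4$ to extract a pair with sum below $K_1$. You give a slightly more detailed case check of the definition of $\oplus$ to conclude tension, whereas the paper just asserts $x_i\oplus x_{i+1}=x_i+x_{i+1}<K_1\leq M$, but the substance is identical.
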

\begin{proof}
For convenience identify $x_{k+1} = x_1$. Look at $\sum_{i=1}^k (x_i+x_{i+1})$. Clearly
$$\sum_{i=1}^k (x_i+x_{i+1}) = 2\sum_i x_i < 4K_1.$$
Let $i$ be such that $x_i+x_{i+1}$ is the smallest possible. Then $k(x_i+x_{i+1}) \leq \sum_{i=1}^k (x_i+x_{i+1}) < 4K_1$, hence $x_i\oplus x_{i+1} = x_i+x_{i+1} < K_1 \leq M$.
\end{proof}

\begin{lemma}
If $\str C'$ can be obtained from $\str C$ by a step of the magic completion algorithm, then $\str C'$ is a $K_1$-cycle.
\end{lemma}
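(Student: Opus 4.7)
The plan is to analyze which adjacent pair $(x_j, x_{j+1})$ the step of the magic completion algorithm acts on, show it is completed via the $d^+$-fork so that the new edge has length exactly $y = x_j + x_{j+1}$, and then observe that the resulting cycle $\str{C}'$ has the same perimeter as $\str{C}$ and inherits metricity.

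To identify the pair, I would first note that by Lemma~\ref{lem:structure:k1} every edge of $\str{C}$ satisfies $x_i \leq K_1 - 1 < M$, so $|x_i - x_{i+1}| < M$ rules out the $d^-$-fork on every adjacent pair. A short verification using the admissibility bounds ($C = 2K_1 + 2K_2 + 1 \geq 4K_1 + 1$ in Case~\ref{II} and $C > 2\delta + K_1 \geq 2K_1 + M$ in Case~\ref{III}, together with $x_i + x_{i+1} \leq 2K_1 - 2$ and $M \leq K_2$) shows $C - 1 - x_i - x_{i+1} > M$ for every adjacent pair, so the $d^C$-fork also never fires. Consequently, each adjacent pair has $\oplus$-value either equal to $x_i + x_{i+1}$ (if this sum is below $M$) or equal to $M$ (otherwise). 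Since Lemma~\ref{lem:tension:k1} supplies an adjacent pair with $x_i + x_{i+1} < K_1 \leq M$, at least one pair has a finite time $t(x_i + x_{i+1}) = 2(x_i + x_{i+1}) + 1$, while pairs with $\oplus = M$ have time $\infty$. Hence the step must pick a pair whose $\oplus$-value is $y = x_j + x_{j+1} < M$; moreover, that pair minimizes $x_i + x_{i+1}$ across all adjacent pairs.

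Next I would check that $\str{C}' = (x_1, \ldots, x_{j-1}, y, x_{j+2}, \ldots, x_k)$ is a $K_1$-cycle. Since $y = x_j + x_{j+1}$, the perimeter of $\str{C}'$ equals that of $\str{C}$ and is therefore odd and smaller than $2K_1$. For metricity it suffices to check $2 x_m \leq \mathrm{perim}$ for each edge $x_m$ of $\str{C}'$; this is inherited from $\str{C}$ for the old edges, and for the new edge $y$ it follows because the chosen pair has the smallest sum among all $k$ adjacent pairs, and the smallest sum is at most the average $\frac{2\,\mathrm{perim}}{k} \leq \frac{\mathrm{perim}}{2}$, using that $k \geq 4$ since a triangle admits no step of the magic completion algorithm.

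The main obstacle is the uniform verification that the $d^C$-fork cannot fire on any adjacent pair of a $K_1$-cycle; this is where the admissibility bounds of Theorem~\ref{thm:admissible} together with the constraints $M \geq K_1$, $M \leq K_2$, and $M \leq \lfloor (C-\delta-1)/2 \rfloor$ have to be combined carefully, separating Cases~\ref{II} and~\ref{III}. Once this bookkeeping is done, the identity $y = x_j + x_{j+1}$ guarantees perimeter preservation, and every defining inequality of a $K_1$-cycle for $\str{C}'$ follows immediately.
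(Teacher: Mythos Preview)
Your argument is correct and follows the same route as the paper's proof: rule out the $d^-$- and $d^C$-forks on the completed pair using Lemma~\ref{lem:structure:k1} together with the admissibility bounds, rule out $M$ via the tension lemma, and conclude that the new edge is $x_j+x_{j+1}$ so that the perimeter (hence its parity and the $K_1$-bound) is preserved. Your explicit verification of metricity for $\str C'$---via the observation that the step necessarily selects a pair of minimum adjacent sum, whence $2y\leq\mathrm{perim}$ by averaging over the $k\geq 4$ adjacent pairs---is a detail the paper's proof leaves implicit under the word ``clearly''.
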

\begin{proof}
Let $u,v,w$ be the vertices of $\str C$ such that $uv$ is not an edge of $\str C$ an it is an edge of $\str C'$. Denote $a=d_\str{C}(u,v)$ and $b=d_\str{C}(v,w)$. By Lemma~\ref{lem:tension:k1} we know that $a\oplus b\neq M$ and clearly $a\oplus b \neq |a-b|$.

Because $a,b < K_1$ by Lemma~\ref{lem:structure:k1}, we have $C-1-a-b > C-1-2K_1$. For Case~\ref{III} this gives $C-1-a-b > 2\delta-K_1 \geq \delta \geq M$, for Case~\ref{II} this gives $C-1-a-b > 2K_2 > M$. Hence $a\oplus b\neq C-1-a-b$, thus $a\oplus b = a+b$. And then clearly $\str C'$ is a $K_1$-cycle.
\end{proof}

We know that if $\str C$ has at least four vertices, then it has a tension. The tension clearly does not involve any edge of length $M$. So this means that when $\str C'$ can be obtained from $\str C$, then they do not differ by \textit{expanding} an edge of length $M$. If $\str C$ has three vertices, then the same conclusion is given precisely by Lemma~\ref{lem:tension3}.

\begin{lemma}
If $\str C'$ can be obtained from $\str C$ by an inverse step of the magic completion algorithm which expanded edge $p$ to edges $q,r$, then $\str C'$ is a $K_1$- or $K_2$-cycle.
\end{lemma}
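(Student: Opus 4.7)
My plan is to identify which branch of the definition of $\oplus$ realizes $q\oplus r=p$, and then verify the corresponding cycle condition in each case. By Lemma~\ref{lem:structure:k1} we have $p<K_1\le M$. This immediately rules out two of the three branches: the $|q-r|$-branch requires $|q-r|>M$ but would produce the value $|q-r|=p<M$, and the ``otherwise''-branch outputs $M\neq p$. So the middle clause of $\oplus$ is active, and either $p=q+r$ or $p=C-1-q-r$.

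I would first handle the subcase $p=q+r$. The perimeter of $\str C'$ equals that of $\str C$, and so is odd and strictly below $2K_1$. Metricity of $\str C'$ is inherited from that of $\str C$: each of $q,r$ is less than $p\le\sum_{j\neq i}x_j$ by metricity of $\str C$, and for any old edge $x_j$ the sum of the remaining edges of $\str C'$ is unchanged by replacing $p$ with the pair $q,r$. Hence $\str C'$ is a $K_1$-cycle.

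Next I would handle the subcase $p=C-1-q-r$. Fact~\ref{fact:oplus}(2) gives $C-1-q-r\ge K_1$ in Case~\ref{III}, contradicting $p<K_1$, so we must be in Case~\ref{II} with $C=2K_1+2K_2+1$. The perimeter of $\str C'$ differs from that of $\str C$ by the even quantity $2(q+r-p)$ and so remains odd. The $K_2$-cycle inequality at $n=0$ asks for $q+r>2K_2+\sum_{j\neq i}x_j$; substituting $q+r=C-1-p$ this reduces to $(C-1)-2K_2-\operatorname{perim}(\str C)>0$, which follows from $\operatorname{perim}(\str C)<2K_1$ together with the identity $C-1-2K_1-2K_2=0$. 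Thus $\str C'$ is a $K_2$-cycle (with $n=0$).

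I expect the second subcase to be the main obstacle: without Fact~\ref{fact:oplus}(2) confining the $d^C$-fork to Case~\ref{II} (where $C$ is odd), neither the parity of the new perimeter nor the tightness of the $K_2$-inequality would fall out so cleanly. Once that confinement is in place, both subcases are short calculations.
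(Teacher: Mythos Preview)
Your proof is correct and follows essentially the same approach as the paper's: rule out the $d^-$ and $M$ branches since $p<K_1\le M$, then split into the $d^+$-fork case (perimeter unchanged, so $\str C'$ is a $K_1$-cycle) and the $d^C$-fork case (which by Fact~\ref{fact:oplus}(2) forces Case~\ref{II}, and then the identity $C-1=2K_1+2K_2$ converts the $K_1$-inequality into the $K_2$-inequality with $n=0$). You are in fact slightly more careful than the paper, since you explicitly verify metricity of $\str C'$ in the $d^+$ case, which the definition of $K_1$-cycle requires but the paper's proof leaves implicit.
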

\begin{proof}
In Case~\ref{III} we have $C-1-x-y \geq K_1>p$, hence $p=q\oplus r = q+r$. And this clearly preserves parity and the $K_1$ inequality.

In Case~\ref{II} if $q\oplus r = q+r$, then as in Case~\ref{III} $\str C'$ is again a $K_1$-cycle. Otherwise $p = q\oplus r = C-1-q-r = 2K_2 + 2K_1 - q-R$. We know that $p + \sum_{x_i\neq p} x_i$ is odd and smaller than $2K_1$. Thus
$$2K_2 + 2K_1 - q-r + \sum_{x_i\neq p} x_i < 2K_1$$
and this sum is odd. But one can rearrange the terms as
$$q+r > 2K_2 + \sum_{x_i\neq p} x_i,$$
which means that $\str C'$ is a $K_2$-cycle.
\end{proof}

\section{Metric $K_2$-cycles with parameters from Case~\ref{III}}
Observe that with parameters from Case~\ref{III} we necessarily have $n=0$: The $K_2$ inequality states that
$$\sum_{i=1}^{2n+2}d_i > 2K_2 + n(C-1) + \sum x_i,$$
which can be rearranged as
$$d_1+d_2 > 2K_2 + \sum x_i + \sum_{i=1}^n\left(C-1-d_{2i+1}-d_{2i+2}\right).$$
And clearly $C-1-d_{2i+1}-d_{2i+2} \geq C-1-2\delta$. So we have
$$d_1+d_2 > 2K_2 + n(C-1-2\delta).$$
In Case~\ref{III} it holds that $C > 2\delta+K_1$ and $2K_2+K_1\geq 2\delta-1$, and when $2K_2+K_1 = 2\delta-1$, then $C\geq 2\delta+K_1+2$. This implies that $n=0$.

Therefore in the whole section we can let $\str{C}$ be a $K_2$-cycle with edges $a, b, x_1, \ldots, x_k$ such that $\str C$ is metric. We also assume that the parameters belong to Case~\ref{III}.

\begin{lemma}\label{lem:structure:k2III}
It holds that $a,b > K_2$ and $\sum x_i \leq K_1$ and also $\sum x_i < K_2$. Furthermore if $\sum x_i = K_1$, then $C\geq 2\delta + K_1 + 2$ and $M > K_1$, so it always holds $\sum x_i < M$.
\end{lemma}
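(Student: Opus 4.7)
The plan is to extract everything from two simple facts: the defining $K_2$-cycle inequality $a+b > 2K_2 + \sum_i x_i$ (with $n=0$), the metric property of $\str{C}$, the trivial bound $a,b\leq \delta$, plus the Case~\ref{III} hypotheses $K_1+2K_2\geq 2\delta-1$ and $3K_2\geq 2\delta$.

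First I would prove $a,b>K_2$. Since $\str{C}$ is metric, $b\leq a+\sum_i x_i$, and substituting into the $K_2$-cycle inequality gives $2a + \sum_i x_i \geq a+b > 2K_2+\sum_i x_i$, i.e.\ $a>K_2$. The bound $b>K_2$ is symmetric. Next, to bound $\sum_i x_i$, I would combine $a+b\leq 2\delta$ with the $K_2$-cycle inequality; since everything is an integer, strict inequality becomes $\sum_i x_i \leq 2\delta - 2K_2 - 1$. The Case~\ref{III} hypothesis $K_1 + 2K_2 \geq 2\delta-1$ rewrites to $2\delta - 2K_2 - 1 \leq K_1$, yielding $\sum_i x_i \leq K_1$; the Case~\ref{III} hypothesis $3K_2 \geq 2\delta$ rewrites to $K_2 - 1 \geq 2\delta - 2K_2 - 1$, yielding $\sum_i x_i \leq K_2 - 1 < K_2$.

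For the boundary case, if $\sum_i x_i = K_1$ then the inequality $\sum_i x_i \leq 2\delta - 2K_2 - 1$ combined with $K_1+2K_2 \geq 2\delta-1$ forces $K_1+2K_2 = 2\delta-1$. The corresponding Case~\ref{III} hypothesis (``if $K_1+2K_2=2\delta-1$ then $C\geq 2\delta+K_1+2$'') then applies, and the first extra clause of Definition~\ref{defn:magicdistance} kicks in to give $M>K_1$. For the very last conclusion $\sum_i x_i < M$: the magic distance definition always guarantees $M\geq K_1$, so if $\sum_i x_i < K_1$ we are done automatically, and if $\sum_i x_i = K_1$ the previous paragraph supplies $M>K_1 = \sum_i x_i$.

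The whole proof is essentially bookkeeping, so I do not expect a genuine obstacle; the only spot requiring a little care is the integer-strictness step, which converts the strict $K_2$-cycle inequality into $\sum_i x_i \leq 2\delta - 2K_2 - 1$ and is what makes the boundary case line up exactly with the ``$K_1+2K_2=2\delta-1$'' clause of Case~\ref{III} and thus with the corresponding clause of the magic distance definition.
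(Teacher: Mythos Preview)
Your proposal is correct and follows essentially the same approach as the paper. The paper proves the $\sum x_i$ bounds first and then $a,b>K_2$ (by contradiction: assuming $b\leq K_2$ and deriving $a>b+\sum x_i$, contradicting metricity), whereas you reverse the order and phrase the $a,b>K_2$ argument directly; these are the same inequality manipulation, and your treatment of the boundary case $\sum x_i=K_1$ simply spells out what the paper leaves implicit under ``The bounds on $\sum x_i$ now follow.''
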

\begin{proof}
Surely $2\delta \geq a+b > 2K_2 + \sum x_i$, but from the admissibility conditions we get $3K_2 \geq 2\delta$ and $2K_2 + K_1 \geq 2\delta-1$. The bounds on $\sum x_i$ now follow.

Without loss of generality suppose $a \geq b$ and for a contradiction suppose $b \leq K_2$. Then
$$a+b > 2K_2 + \sum x_i\geq 2b+\sum x_i,$$
hence $a > b+\sum x_i$. But this means that $\str C$ is non-metric, which is a contradiction with the assumptions.
\end{proof}

\begin{lemma}\label{lem:tension:k2III}
If $\str C$ has at least 4 vertices, then $\str C$ has a tension.
\end{lemma}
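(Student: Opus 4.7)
The plan is to split into cases based on how the ``small'' edges $x_1,\ldots,x_k$ are arranged around the cycle relative to the two ``large'' edges $a,b$. First I will record the consequences of Lemma~\ref{lem:structure:k2III}: we have $a,b>K_2\geq M$, $\sum_{i=1}^k x_i<M$, and in particular every $x_i<M$ and any two $x_i$'s sum to less than $M$. Since $\str C$ has at least four vertices it has $k+2\geq 4$ edges, so $k\geq 2$.

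The easy case is when two small edges $x_i,x_j$ are adjacent in $\str C$. Then $x_i+x_j\leq\sum_\ell x_\ell<M$, so $\min(x_i+x_j,\,C-1-x_i-x_j)\leq x_i+x_j<M$ and the definition of $\oplus$ forces $x_i\oplus x_j=\min(x_i+x_j,\,C-1-x_i-x_j)\neq M$, giving a tension.

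The only remaining configuration is when no two $x_i$'s are adjacent, so the edges $a,b$ separate the $x_i$'s into two arcs each containing at most one small edge. This forces $k\leq 2$, hence $k=2$, and the cyclic order (up to reflection) is $a,x_1,b,x_2$. Suppose for contradiction that there is no tension: then $a\oplus x_1=M$ and $b\oplus x_2=M$. By the definition of $\oplus$, equality with $M$ requires $|a-x_1|\leq M$; since $a>K_2\geq M>x_1$, this yields $a\leq x_1+M$, and analogously $b\leq x_2+M$. Adding and using $M\leq K_2$ gives
$$a+b\leq x_1+x_2+2M\leq x_1+x_2+2K_2,$$
directly contradicting the defining $K_2$-cycle inequality $a+b>2K_2+x_1+x_2$ (recall that in Case~\ref{III} we have $n=0$, as shown at the start of the section).

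The only even mildly delicate step is the alternating arrangement $a,x_1,b,x_2$: there one has to extract just the right piece of information from ``$\oplus=M$'' (namely the distance bound $|a-x_1|\leq M$, not the additive bound) and combine it with $M\leq K_2$ to contradict the $K_2$-cycle inequality. Everything else is routine pigeonholing on how the two large edges partition the cycle.
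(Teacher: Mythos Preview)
Your proof is correct and follows essentially the same approach as the paper: first dispose of the case where two $x_i$'s are adjacent, then handle the alternating configuration $(a,x_1,b,x_2)$. The only difference is cosmetic: in the alternating case the paper picks the larger of $a,b$ and the smaller of $x_1,x_2$ and shows directly that $a-x_1>K_2\geq M$, whereas you argue by contradiction and sum the two bounds $a\leq x_1+M$ and $b\leq x_2+M$ to collide with the $K_2$-inequality. Both arguments are really the same inequality read in two directions.
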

\begin{proof}
If there are some $x_i, x_j$ which are adjacent, then $x_i\oplus x_j = x_i+x_j < M$ and we found a tension. Otherwise $\str C=(a,x_1,b,x_2)$. Suppose without loss of generality that $a\geq b$ and $x_1\leq x_2$. This means that $a > K_2 + x_1$, hence $a\oplus x_1 = a-x_1 > K_2 \geq M$, again a tension.
\end{proof}

\begin{lemma}
If $\str C'$ can be obtained from $\str C$ by a step of the magic completion algorithm, then $\str C'$ is a $K_2$-cycle.
\end{lemma}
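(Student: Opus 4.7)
The plan is to inspect what a step of the magic completion algorithm can do to $\str C$ and verify that the resulting cycle $\str C'$ is always a $K_2$-cycle. Since $\str C$ has at least four vertices (a triangle admits no step), Lemma~\ref{lem:tension:k2III} supplies a tension in $\str C$, so the merged edge $r = p\oplus q$ produced by the step satisfies $r\neq M$. I split into cases according to the type of the merged pair $(p,q)$: \textbf{(A)} both edges are among the small edges $x_1,\dots,x_k$; \textbf{(B)} one of them is $a$ or $b$ and the other some $x_j$; or \textbf{(C)} the pair is $(a,b)$ itself.

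In case (A), Lemma~\ref{lem:structure:k2III} gives $|p-q|<M$ and $p+q\leq \sum x_i<M$, while in Case~\ref{III} we have $C-1-p-q\geq 2\delta>M$; the definition of $\oplus$ thus forces $r=p+q$. The new cycle keeps $a,b$ as big edges, has the same $\sum x_i$ and the same perimeter, so both the $K_2$ inequality and odd parity are preserved. In case (B), say $p=a$: since $a>K_2\geq M$ and $x_j<M$, the $d^+$ and $d^C$ branches of $\oplus$ both return a value $\geq M$ in Case~\ref{III} (from $a+x_j>M$ and $C-1-a-x_j\geq\delta\geq M$), so we must have $r=a-x_j>M$. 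The resulting cycle has big edges $r,b$ and small edges $\{x_i:i\neq j\}$, and the inequality $r+b>2K_2+\sum_{i\neq j}x_i$ rearranges to the assumed $a+b>2K_2+\sum x_i$; the perimeter drops by $2x_j$ and stays odd. Thus $\str C'$ is a $K_2$-cycle.

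The main obstacle is case (C): collapsing both big edges into $r=a\oplus b\leq \delta-K_2-1$ (or $C-1-a-b$) would typically produce a cycle whose edges are all too small to realise the $K_2$ inequality, so the output would generally fail to lie in $\mathcal F$. My plan is to show (C) is never the first step when $\str C$ has at least four vertices. For $a,b$ to be adjacent the cycle must have the form $\str C=(a,b,x_1,\dots,x_k)$ with $k\geq 2$, so at least one pair of consecutive $x$'s is always present. If $a\oplus b=|a-b|$, then $t(|a-b|)=2(\delta-|a-b|)\geq 2(K_2+1)$ via $|a-b|\leq \delta-K_2-1$, whereas $t(x_i+x_{i+1})=2(x_i+x_{i+1})+1\leq 2K_1+1<2K_2+2$. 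If $a\oplus b=C-1-a-b$ (by Fact~\ref{fact:oplus} only possible when $C'=C+1$), then $C-1-a-b\geq K_1$ and $x_i+x_{i+1}\leq\sum x_i\leq K_1$; moreover $\sum x_i=K_1$ would force $C\geq 2\delta+K_1+2$ by the second clause of Lemma~\ref{lem:structure:k2III}, bumping $C-1-a-b$ to $\geq K_1+1$, so the inequality is strict in every subcase. Hence some adjacent $x$-pair always fires strictly earlier than $(a,b)$, the step is always of type (A) or (B), and $\str C'$ is a $K_2$-cycle.
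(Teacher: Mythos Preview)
Your proof is correct and follows essentially the same strategy as the paper's: split according to which pair of adjacent edges is merged, verify that merging two $x$'s or one big edge with an $x$ preserves the $K_2$-inequality (using tension to exclude $M$), and rule out merging $a$ with $b$ by a time comparison against an adjacent $x$-pair. The only cosmetic difference is that your case~(C) treats the subcase $a\oplus b=|a-b|$ explicitly, whereas the paper observes directly that $a\oplus b$ can only be $C-1-a-b$ here (indeed $|a-b|\leq\delta-K_2-1<M$ in Case~\ref{III}, so that subcase is vacuous); your conditional argument for it is still sound.
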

\begin{proof}
Clearly $a\oplus x_i$ and $b\oplus x_i$ use the $d^-$-fork or are equal to $M$ for every $i$. and $d^-$ fork on these edges preserves the $K_2$ inequality. Also $x_i\oplus x_j = x_i+x_j$ for every $i\neq j$, which again preserves the $K_2$ inequality.

It remains to check what would happen if $\str C'$ differed from $\str C$ by replacing $a,b$ by $a\oplus b$. This can only happen if $a\oplus b = C-1-a-b$. But this would mean that $a,b$ are adjacent. And thus there are some $x_i, x_j$ adjacent. If $x_i+x_j<K_1$, then it is a contradiction with time, as $C-1-a-b\geq K_1$ and thus $t(x_i+x_j) < t(C-1-a-b)$. So the only possibility is $x_i+x_j = K_1$. But then $C\geq 2\delta+K_1+2$, hence $C-1-a-b\geq K_1+1$, again a contradiction with time.
\end{proof}

\begin{lemma}
If $\str C'$ can be obtained from $\str C$ by an inverse step of the magic completion algorithm which expanded edge $p$ to edges $q, r$. Then $\str C'$ is a $K_2$-cycle.
\end{lemma}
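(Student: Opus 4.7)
The plan is to do a case analysis on which edge of $\str C$ is $p$, after noting that the tension argument for $K_2$-cycles (Lemma~\ref{lem:tension:k2III} for cycles with at least four vertices, Lemma~\ref{lem:tension3} for triangles) rules out $p=M$. Recall from Lemma~\ref{lem:structure:k2III} that in Case~\ref{III} we have $a,b > K_2$ and each $x_i \leq \sum_j x_j \leq K_1 < K_2$; combined with the standing bounds $K_1 \leq M \leq K_2$ from Definition~\ref{defn:magicdistance}, this partitions the edges of $\str C$ into a \emph{short} regime $\{x_1,\ldots,x_k\}$, where $p \leq K_1 \leq M$, and a \emph{long} regime $\{a,b\}$, where $p > K_2 \geq M$. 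Since the $d^-$-fork produces a value strictly greater than $M$ while the $d^+$- and $d^C$-forks produce values strictly smaller than $M$, only one fork type is even eligible in each regime.

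In the short regime I would first rule out the $d^C$-fork: if $p = C-1-q-r$ with $q,r\leq\delta$, then $p \geq C-1-2\delta \geq K_1$ (using $C > 2\delta+K_1$ in Case~\ref{III}), forcing $x_i = K_1$ and $k=1$; but then the conditional strengthening in Lemma~\ref{lem:structure:k2III} yields $C \geq 2\delta+K_1+2$, improving the bound to $p \geq K_1+1$, a contradiction. So the only available fork is $d^+$, with $p = q+r$. The cycle $\str C'$ obtained by replacing the edge $x_i$ by the pair $q,r$ has the same perimeter (and hence odd parity) as $\str C$, and since $\sum_{j\neq i}x_j + q + r = \sum_j x_j$, the old distinguished pair $a,b$ still witnesses the $K_2$-inequality for $\str C'$.

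In the long regime assume by symmetry $p=a$, so the only available fork is $d^-$, giving $p = |q-r|$; taking $q > r$ makes $q = a+r$. The new cycle $\str C'$ has edges $q, r, b, x_1, \ldots, x_k$; its perimeter grows by $2r$, preserving odd parity. I would designate $q$ and $b$ as the new distinguished pair: the required inequality $q + b = a + b + r > 2K_2 + \sum_i x_i + r$ is immediate from $\str C$'s inequality by adding $r$ to both sides, so $\str C'$ is a $K_2$-cycle (with $n=0$, as forced in Case~\ref{III}). The only delicate point of the whole argument is ruling out the $d^C$-fork in the short regime, which consumes the conditional strengthening of $C$ encoded in Lemma~\ref{lem:structure:k2III}; everything else is routine bookkeeping with the $K_2$-inequality and parity.
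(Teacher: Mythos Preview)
Your argument is correct and follows the same case split as the paper: long edges $a,b$ force the $d^-$-fork, short edges $x_i$ force the $d^+$-fork, and in each case the $K_2$-inequality and odd parity transfer directly. The paper's proof is terser---it simply asserts ``Then $q\oplus r = q+r = x_i$'' in the short case---whereas you explicitly rule out the $d^C$-fork using the conditional bound $C\geq 2\delta+K_1+2$ from Lemma~\ref{lem:structure:k2III}; the paper sets up exactly this information in its opening sentence but leaves the final inference to the reader. Two minor remarks: your appeal to tension to exclude $p=M$ is harmless but unnecessary, since Lemma~\ref{lem:structure:k2III} already gives $x_i<M$ and $a,b>K_2\geq M$; and in the short regime you don't actually need $k=1$, only $\sum_j x_j = K_1$, to invoke the conditional strengthening.
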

\begin{proof}
We know that $a,b>K_2$ and $x_i\leq K_1$ for every $i$ with equality implying $C\geq 2\delta+K_1+2$. This means that if $p$ is one of $a,b$, say, $a$, then $q\oplus r = |q-r|$, say $q\geq r$, hence $q\oplus r = q-r$. But then $q+b > 2K_2 + r + \sum x_i$ and $\str C'$ is again a $K_2$-cycle.

Otherwise $p$ is $x_i$ for some $i$. Then $q\oplus r = q+r = x_i$. And thus $a+b > 2K_2 + q+r+\sum_{x_j\neq x_i} x_j$ and $\str C'$ is a $K_2$-cycle.
\end{proof}

\section{Non-$C$ $K_2$-cycles with parameters from Case~\ref{II}}
In the whole section we let $\str{C}$ be a $K_2$-cycle with edges $d_1, \ldots, d_{2n+2}, x_1, \ldots, x_k$ such that $\str C$ is not a $C$-cycle. We also assume that the parameters belong to Case~\ref{II}.

\begin{lemma}\label{lem:stupid:k2II}
If $\str D$ is a $C$-cycle and the parameters come from Case~\ref{IIb}, then either $n\leq 1$, or $\delta=5$ and $\str D = (5,5,5,5,5)$.
\end{lemma}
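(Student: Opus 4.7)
The plan is an elementary counting argument: bound the left hand side of the $C$-cycle inequality by the trivial estimate $\sum_{i=0}^{2n} d_i \leq (2n+1)\delta$, and compare it with the lower bound $n(C-1)$.

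First I would write $C-1$ explicitly under the assumptions of Case~\ref{IIb}. Combining $C = 2K_1 + 2K_2 + 1$ with $K_1 = K_2$ and $3K_2 = 2\delta - 1$ gives $C - 1 = 4K_2 = \frac{8\delta - 4}{3}$. The divisibility constraint $3K_2 = 2\delta-1$ also forces $\delta \equiv 2 \pmod 3$, so in Case~\ref{IIb} the admissible values of $\delta$ are $5, 8, 11, \ldots$.

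Second, dropping the nonnegative term $\sum x_i$ and applying $d_i \leq \delta$, the defining inequality of a $C$-cycle yields
$$(2n+1)\delta \;\geq\; \sum_{i=0}^{2n} d_i \;>\; n(C-1) \;=\; \frac{n(8\delta-4)}{3}.$$
Clearing denominators and rearranging gives $n < \frac{3\delta}{2(\delta - 2)}$. The right hand side is strictly decreasing in $\delta$, with value $\tfrac{5}{2}$ at $\delta = 5$ and value $2$ at $\delta = 8$. Consequently $n \leq 1$ for every admissible $\delta \geq 8$, while for $\delta = 5$ we only get $n \leq 2$.

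Finally I would handle the borderline case $\delta = 5$, $n = 2$ directly. Here $C - 1 = 12$, so the $C$-cycle inequality reads $\sum_{i=0}^{4} d_i > 24 + \sum_{i=1}^{k} x_i$. Combined with $\sum d_i \leq 25$ and the fact that both sides are integers, this forces $\sum d_i = 25$, $\sum x_i = 0$ (so $k = 0$), and $d_i = 5$ for each $i$, i.e.\ $\str D = (5,5,5,5,5)$. There is no real obstacle in the argument; the only things to be careful about are keeping the upper bound $(2n+1)\delta$ in mind and remembering the mod-$3$ restriction on $\delta$ imposed by Case~\ref{IIb}.
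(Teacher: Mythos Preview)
Your proof is correct and follows essentially the same approach as the paper: bound $\sum d_i$ above by $(2n+1)\delta$, bound it below by $n(C-1)$ via the $C$-cycle inequality (dropping $\sum x_i$), use $C-1=4K_2=\tfrac{8\delta-4}{3}$ to obtain $3\delta>n(2\delta-4)$, and then treat the borderline case $\delta=5$, $n=2$ by observing that all inequalities in the chain must be equalities. Your write-up is slightly more explicit than the paper's in spelling out the mod~$3$ restriction on $\delta$ and in the final $\delta=5$ computation, but the argument is the same.
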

\begin{proof}
In Case~\ref{IIb} we have $C=2\delta+K_2 = 4K_2+1$. Suppose that $n\geq 2$ and that the edges of $\str D$ are named as in the $C$-inequality. Then
$$(2n+1)\delta\geq \sum d_i > 2n\delta + n(K_2 - 1) + \sum x_i \geq 2n\delta + n(K_2 - 1),$$
or
$$\delta > n(K_2-1).$$
However, in Case~\ref{IIb} we have $K_2 = \frac{2\delta-1}{3}$, hence $3\delta > n(2\delta-4)$. If $n\geq 3$, we get $\delta<4$, which is absurd in Case~\ref{IIb}. If $n=2$, we get $\delta < 8$, hence $\delta=5$. For $\delta=5$ all the estimates are actually equalities and it follows that $\str D=(5,5,5,5,5)$.
\end{proof}

Lemma~\ref{lem:stupid:k2II} implies that if the parameters come from Case~\ref{IIb} and $\str D$ is a $C$-cycle of odd perimeter, then $\str D$ is a $C_1$-cycle or the special case $(5,5,5,5,5)$ (as $C=C_1$ in Case~\ref{IIb}), in both cases it is forbidden also for different reasons that the $K_2$-inequality and hence it makes sense to assume here that $\str C$ is not a $C$-cycle (we will deal with those later).

\begin{lemma}\label{lem:structure:k2II}
It holds that $d_i > K_2$ for all $i$, $x_i < K_1$ for all $i$ and $\sum x_i < 2K_1$.
\end{lemma}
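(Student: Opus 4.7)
The plan is to derive all three bounds from a single observation: any way of moving one edge between the $d$-list and the $x$-list of $\str{C}$ converts the $K_2$-inequality into a candidate $C$-inequality, and since $\str{C}$ is assumed not to be a $C$-cycle, each such move must fail. In Case~\ref{II} we have the identity $C-1 = 2K_1 + 2K_2$, which is what converts the arithmetic of the $K_2$-excess into pointwise conditions on individual edge lengths.

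For the first claim I would argue by contradiction: suppose some $d_j \leq K_2$; by symmetry assume $j = 2n+2$. Demote $d_{2n+2}$ from the $d$-list to the $x$-list and ask whether $\str{C}$ qualifies as an $n$-$C$-cycle with distinguished edges $d_1,\dots,d_{2n+1}$ (still an odd-length list of $2n+1$ edges) and non-distinguished edges $x_1,\dots,x_k,d_{2n+2}$. Subtracting $d_{2n+2}$ from both sides of the $K_2$-inequality and using $2K_2 \geq 2d_{2n+2}$ immediately verifies the $C$-inequality, contradicting the hypothesis. The second claim is dual: if some $x_j \geq K_1$, promote $x_j$ to the $d$-list and check whether $\str{C}$ is an $(n+1)$-$C$-cycle with $2n+3$ distinguished edges. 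The required inequality reduces, via the identity $C-1 = 2K_1 + 2K_2$ together with $x_j \geq K_1$, to an immediate consequence of the $K_2$-inequality, again contradicting the assumption.

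The third claim is a crude arithmetic estimate. Bounding each $d_i$ by $\delta$ and using $C-1 = 2K_1+2K_2$ in the $K_2$-inequality gives
$$(2n+2)\delta > 2K_2 + 2n(K_1+K_2) + \sum_{i=1}^k x_i,$$
hence $\sum x_i < 2\delta - 2K_2 + 2n(\delta - K_1 - K_2)$. The Case~\ref{II} admissibility condition $K_1 + K_2 \geq \delta$ makes the second summand non-positive and also upgrades $2\delta - 2K_2$ to at most $2K_1$, yielding $\sum x_i < 2K_1$. The only point that requires a moment of care is checking that the two edge-moving arguments really produce cycles whose new $d$-list has odd length, so they qualify as $C$-cycles in the sense of Definition~\ref{defn:forbcycles}; this is automatic because $(2n+2) \pm 1$ is odd in both directions. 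No harder step is anticipated, since the odd-perimeter hypothesis on $\str{C}$ plays no role in the $C$-cycle reduction.
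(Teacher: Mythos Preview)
Your proof is correct and follows essentially the same route as the paper: demoting a small $d_j$ (respectively promoting a large $x_j$) to the other list and observing that the $K_2$-inequality together with $C-1=2K_1+2K_2$ forces the $C$-inequality, then bounding $\sum d_i$ by $(2n+2)\delta$ and invoking $K_1+K_2\geq\delta$ for the third claim. The paper carries out exactly these three steps with the same arithmetic.
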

\begin{proof}
For a contradiction suppose that (without loss of generality) $d_{2n+2} \leq K_2$. But then
$$d_1 + \ldots + d_{2n+1} > n(C-1) + \sum x_i + 2K_2 - d_{2n+2} \geq n(C-1) + \sum x_i + d_{2n+2},$$
which means that $\str C$ is a $C$-cycle, which is a contradiction.

Now suppose that, say, $x_k \geq K_1$. Then 
$$\sum d_i + x_k > n(C-1) + \sum_{i=1}^{k-1} x_i + 2K_2 + 2x_k \geq n(C-1) + \sum_{i=1}^{k-1} x_i + C-1,$$
hence $\str C$ is again a $C$-cycle, which is a contradiction.

Finally we have
$$2(n+1)\delta \geq \sum d_i > n(2K_2+2K_1) + 2K_2 + \sum x_i.$$
As $K_2 + K_1 \geq \delta$, it follows that $\sum x_i < 2K_1$.
\end{proof}

\begin{lemma}\label{lem:tension:k2II}
If $\str C$ has at least 4 vertices, then $\str C$ has a tension.
\end{lemma}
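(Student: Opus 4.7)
The plan is to suppose for contradiction that $\str{C}$ has no tension and to derive impossibility from the $K_2$-inequality. Using Lemma~\ref{lem:structure:k2II} (each $d_i>K_2\geq M$ and each $x_j<K_1\leq M$), I would first translate $a\oplus b=M$ at each type of neighboring pair into three binding local constraints: (i) $d_i+d_j\leq 2K_1+2K_2-M$ at adjacent $d$-$d$ pairs, (ii) $d_i\leq M+x_j$ at adjacent $d$-$x$ pairs, and (iii) $x_i+x_j\geq M$ at adjacent $x$-$x$ pairs; the remaining conditions in the definition of $\oplus$ hold automatically under the size bounds above (in particular $C-1-x_i-x_j>2K_2\geq 2M$ for (iii)). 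If some adjacent $x$-pair violates (iii) we are already done, so I assume (iii) holds throughout.

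Next I would handle $k=0$ (which forces $n\geq 1$ when $\str C$ has at least four vertices) by summing (i) cyclically; combined with the $K_2$-inequality this yields $2K_1>(n+1)M$, contradicting $M\geq K_1$. For $k\geq 1$, I would decompose the cyclic arrangement into $r\geq 1$ maximal $d$-runs alternating with $x$-runs. For each $d$-run of length $l$ flanked by $x$-edges $x^L,x^R$, summing the $l-1$ internal instances of (i) with the two boundary instances of (ii) gives
$$2S_{\text{run}}\leq (l-1)(2K_1+2K_2-M)+2M+x^L+x^R.$$
Summing over all $r$ runs and using that each $x$-edge is a boundary of at most two $d$-runs (so $\sum_{\text{runs}}(x^L+x^R)\leq 2\sum x$) yields
$$\sum d \leq (2n+2-r)(K_1+K_2)+\tfrac{(3r-2n-2)M}{2}+\sum x,$$
which combined with the $K_2$-inequality rearranges to the key inequality $M(3r-2n-2)/2>r(K_1+K_2)-2K_1\;(\star)$.

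To close, I would contradict $(\star)$ using $M\leq K_2$. For $r\geq 2$, the RHS is at least $2K_2>0$, and a short case analysis on the sign of $3r-2n-2$ (invoking $r\leq 2n+2$, which reduces the sub-case $3r-2n-2>0$ to the manifestly valid $(r-2n-2)K_2\leq 2(r-2)K_1$) shows $(\star)$ cannot hold. For $r=1$ with $n\geq 1$, the LHS of $(\star)$ is negative while the RHS is non-negative, so $(\star)$ fails. The only residual configuration is $r=1,n=0,K_2<2K_1$; there the two $d$-edges are adjacent and $k\geq 2$ (for at least four vertices), so applying (i) to the $d$-pair together with the $K_2$-inequality forces $\sum x<2K_1-M\leq K_1\leq M$, and hence some adjacent $x$-pair (which exists since all $k\geq 2$ of the $x$-edges form a single consecutive run) has sum strictly less than $M$, contradicting our standing assumption on (iii). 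I expect this residual $r=1,n=0$ case to be the main obstacle: the averaging inequality $(\star)$ is too weak there, and one must exploit the rigid structural fact that all $x$-edges lie in one consecutive arc to push through.
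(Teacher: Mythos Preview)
Your proof is correct and takes a genuinely different organizational route from the paper's. The paper first uses an extremal trick: taking $d_1$ minimal among the $d$'s and $x_1$ maximal among the $x$'s, it shows that under the no-tension hypothesis the cycle must have the rigid form $\str C=(x_1,\ldots,x_k,d_1,\ldots,d_{2n+2})$, i.e.\ all $x$-edges are contiguous (a single run). Only after establishing this structural fact does it sum the constraints $d_i+d_{i+1}\leq C-1-M$ and $d_1-x_k,\,d_{2n+2}-x_1\leq M$ and split into $k=0$, $n\geq 1$, $n=0$. You instead bypass the structural lemma entirely: your run decomposition works for an arbitrary cyclic arrangement of $d$'s and $x$'s, and the unified inequality $(\star)$ disposes of every case except $r=1,n=0$, where the single-run structure is automatic and you finish exactly as the paper does (forcing $\sum x<2K_1-M\leq M$, hence an adjacent $x$-pair with small sum).

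What each approach buys: the paper's extremal argument yields the extra structural information that the $x$'s are contiguous, which makes the final summation very concrete; your averaging argument is more mechanical and avoids the clever choice of $d_1,x_1$, packaging almost everything into one inequality. Your derivation of the local constraints (i)--(iii) is clean and matches what the paper uses implicitly (in particular your check that $C-1-d_i-x_j>M$ and $C-1-x_i-x_j>2K_2$ in Case~\ref{II} is exactly what is needed to rule out the $d^C$-fork at those pairs). The case split on the sign of $3r-2n-2$, reduced via $M\leq K_2$ and $r\leq 2n+2$ to $(r-2n-2)K_2\leq 2(r-2)K_1$, is correct and tight at $r=2,n=0$ (where one needs the strictness of $(\star)$ together with $M\leq K_2$).
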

\begin{proof}
Without loss of generality we can assume that $d_1$ is the smallest among $d_i$'s. Then for every $2\leq j\leq 2n+2$ it holds that
$$2n\delta + d_1 + d_j \geq \sum d_i > 2K_2 + n(C-1) + \sum x_i\geq 2K_2 + 2n\delta + \sum x_i,$$
hence
$$d_1+d_j > 2K_2 + \sum_{i=1}^k x_i.$$
We can also assume that $x_1$ is the largest among $x_i$'s. From this we get that $d_j > K_2+\frac{\sum x_i}{2} \geq K_2 + x_i$ for every $2\leq i\leq k$ (if $k\geq 2$). So if $x_i$ and $d_j$ are adjacent for $i,j\geq 2$, we have $d_j\oplus x_i = d_j - x_i > K_2 \geq M$, a tension. Suppose this does not happen.

Then the only adjacent $d_j$ and $x_i$ can be $d_1$ and $x_i$ for some $i$ and $x_1$ and $d_j$ for some $j$. As there are at least $d_1$ and $d_2$, this implies that all $x_i$'s for a contiguous segment in $\str C$. Either this segment has length zero or one, or it has $x_1$ on one end and it neighbours with $d_1$ on the other.

So we can enumerate the $d_i$'s and $x_i$'s such that $\str C = (x_1, \ldots, x_k, d_1, d_2, \ldots, d_{2n+2})$. Notice that this makes sense even if $k\in \{0,1\}$.

If there are some $d_i, d_j$ which are adjacent and $C-1-d_i-d_j < M$, then we found a tension. So suppose this does not happen. This means that we have $d_i+d_{i+1}\leq C-1-M$ for $1\leq i\leq 2n+1$.

If there are no $x_i$'s, then also $d_1$ and $d_{2n+2}$ are adjacent. Hence we get also $d_1+d_{2n+2}\leq C-1-M$. If we sum all these inequalities, we get
$$2\sum d_i \leq (2n+2)(C-1) - (2n+2)M.$$
On the other hand we know that
$$2\sum d_i > 4K_2 + 2n(C-1).$$
Combining these two inequalities gives $(2n+2)(C-1) > 4K_2 + 2n(C-1)+(2n+2)M$, or $2(C-1) > 4K_2+(2n+2)M\geq 4K_2 + 4K_1$, as $M\geq K_1$ and clearly $n\geq 1$ as $\str C$ has at least four vertices. But $C-1 = 2K_2 + 2K_1$, which gives a contradiction.

Otherwise $k\geq 1$ and we have some $x_i$'s. Then we know that $d_1-x_k\leq M$ and $d_{2n+2}-x_1\leq M$ (otherwise we would have a tension). Summing up these inequalities together with $d_i+d_{i+1}\leq C-1-M$ for $1\leq i\leq 2n+1$ we get
$$2\sum d_i \leq (2n+1)(C-1) - (2n-1)M + x_1 + x_k.$$
And we know that
$$2\sum d_i > 4K_2 + 2n(C-1) + 2\sum x_i.$$

Combining these inequalities we get
$$(2n+1)(C-1) > 4K_2 + 2n(C-1) + (2n-1)M + 2\sum x_i - x_1 - x_k.$$
or
$$C-1 > 4K_2 + (2n-1)M + 2\sum x_i - x_1 - x_k.$$

If $n\geq 1$, then as $2\sum x_i - x_1 - x_k\geq 0$ (note that it is true even for $k=1$), we get $C-1 > 4K_2$, but it is absurd as $C-1=2K_2+2K_1\leq 4K_2$.
If $n = 0$, then as $\str C$ has at least four vertices, we have $k\geq 2$. It means that $2\sum x_i - x_1 - x_k\geq x_1+x_2$ and thus the inequality simplifies to
$$2K_2 + 2K_1 > 4K_2 - M + x_1+x_2.$$
If $x_1+x_2<M$, then they give a tension, hence $x_1+x_2\geq M$ and the inequality is, again, contradictory.
\end{proof}

\begin{lemma}
If $\str C'$ can be obtained from $\str C$ by a step of the magic completion algorithm, then $\str C'$ is a $K_1$- or $K_2$-cycle.
\end{lemma}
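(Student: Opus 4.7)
The plan is to carry out a case analysis on the two adjacent edges $a,b$ of $\str{C}$ that get combined in the step, with $c=a\oplus b$ the new edge in $\str{C}'$. By Lemma~\ref{lem:structure:k2II} every edge of $\str{C}$ is either \emph{large} (one of the $d_i$'s, so $>K_2\geq M$) or \emph{small} (one of the $x_j$'s, so $<K_1\leq M$), and by the tension lemma $c\neq M$. A preliminary observation is that the $d^-$-fork cannot apply to two large edges: since $a,b>M\geq \lceil\delta/2\rceil$ we have $|a-b|\leq \delta-M-1<M$, so the first branch of $\oplus$ is ruled out on large/large pairs.

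When both $a,b$ are small, the bounds $a+b<2K_1$ and $C-1-a-b>2K_2>M$ together with $c\neq M$ force $c=a+b$; relabeling $a+b$ as a new $x$-edge preserves the $K_2$-inequality term-for-term, so $\str{C}'$ is a $K_2$-cycle. When one of $a,b$ is large and the other small, the estimate $C-1-a-b\geq K_1+2K_2-\delta+1 > M$ (via $K_1+K_2\geq\delta$ and $K_2\geq M$) eliminates every branch of $\oplus$ except $c=a-b$; replacing $\{a,b\}$ by $\{a-b\}$ leaves the difference $\sum d - \sum x$ unchanged (the edge $a-b$ may be assigned to either the $d$-group or the $x$-group), so again $\str{C}'$ is a $K_2$-cycle.

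The substantive case is $a,b$ both large: only the $d^C$-fork can apply, giving $c=C-1-a-b<M$, a small edge. For $n\geq 1$ I would repartition with $n'=n-1$, placing $c$ into the $x$-group; the required inequality $\sum d - a-b > 2K_2 + (n-1)(C-1) + \sum x + c$ is just the original $K_2$-inequality, so $\str{C}'$ is still a $K_2$-cycle. For $n=0$ the new cycle $(c,x_1,\dots,x_k)$ consists entirely of small edges, its perimeter $c+\sum x_i<2K_1$ follows from the $K_2$-inequality rewritten as $c<2K_1-\sum x_i$, and odd parity is preserved because $C-1$ is even in Case~\ref{II}.

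The hard part will be to verify metricity in this $n=0$ sub-case, so that $\str{C}'$ actually lies among the $K_1$-cycles and not among the non-metric cycles. I plan to exploit that this $d^C$-step is the \emph{first} step of the magic algorithm, so every other adjacency of $\str{C}$ has $\oplus$-time at least $t(c)=2c+1$. Translating the two $(d,x)$-adjacencies $(a,x_k)$ and $(b,x_1)$ into value bounds $d-x\leq \delta-c$ when active and summing gives $x_1+x_k\geq c+2(K_1+K_2-\delta)\geq c$, using $K_1+K_2\geq\delta$; in a sub-case where some $(d,x)$-adjacency is instead inactive, the bound $d\leq M+x$ together with $d>K_2$, $M\leq K_2$ and $K_1+K_2\geq\delta$ directly forces $x_1+x_k\geq K_1\geq c$. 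Either way $\sum x_i\geq c$, so $\str{C}'$ is metric and is therefore a $K_1$-cycle.
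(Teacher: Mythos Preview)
Your case analysis on the type of the two merged edges (small/small, large/small, large/large) and the handling of each branch of $\oplus$ is exactly the argument the paper gives; the paper's proof is essentially the first three paragraphs of your proposal, ending with ``$\str C'$ is a $K_1$-cycle'' in the $n=0$ sub-case without any further justification.

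Where you diverge is the final paragraph: you notice that the definition of a $K_1$-cycle requires metricity and attempt to verify it, which the paper simply omits. Two remarks on this extra work. First, your check is one-sided: you only argue $c\le\sum x_i$, but metricity also requires each $x_j\le c+\sum_{i\neq j}x_i$. This second inequality is in fact a consequence of the standing hypothesis that $\str C$ is \emph{not} a $C$-cycle: taking $d_1,d_2,x_j$ as the three ``$d$''-edges in the $C$-inequality with $n'=1$ and negating gives $d_1+d_2+x_j\le (C-1)+\sum_{i\neq j}x_i$, i.e.\ $x_j\le c+\sum_{i\neq j}x_i$. You should make this explicit. Second, your ``inactive'' sub-case claim that $d\le M+x$ forces $x_1+x_k\ge K_1\ge c$ is not correct as stated: from $d>K_2$ and $M\le K_2$ you only get $x>K_2-M\ge 0$, and there is no general reason why $c\le K_1$ in Case~\ref{II}A with $K_1<K_2$. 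A cleaner route is to use the bound $2M\le C-1-\delta$ directly on $x_1+x_k\ge d_1+d_2-2M=(C-1-c)-2M\ge\delta-c$ and combine with $c<M\le\delta/2$, or simply to observe (as the paper implicitly does) that if metricity fails then $\str C'$ is a non-metric cycle, hence still in $\mathcal F$, which is all that Lemma~\ref{lem:aboutf} requires.
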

\begin{proof}
Clearly $d_i\oplus x_j \in \{d_i-x_j, M\}$ for every $i$ and $j$, $M$ is not used due to tension and $d_i-x_j$ preserves the $K_2$ inequality. Also $x_i\oplus x_j\in \{M,x_i+x_j\}$ for every $i\neq j$, and $x_i+x_j$ again preserves the $K_2$ inequality.

It remains to check what would happen if $\str C'$ differed from $\str C$ by replacing $d_i, d_j$ by $d_i\oplus d_j \in \{M, C-1-d_i, d_j\}$. From tension we know that actually $d_i\oplus d_j = C-1-d_i-d_j$.

If $n\geq 1$, then $\str C'$ is again a $K_2$-cycle with $n'=n-1$.

Otherwise $n=0$, then $d_i=d_1$ and $d_j=d_2$ and we know that $d_1\oplus d_2 = C-1-d_1-d_2= 2K_2 + 2K_1 - d_1-d_2 < 2K_1 - \sum x_i$ as $d_1+d_2>2K_2+\sum x_i$. But then $(d_1\oplus d_2) + \sum x_i < 2K_1$. And as $d_1+d_2$ has the same parity as $2K_2+2K_1-d_1-d_2$, $\str C'$ is a $K_1$-cycle.
\end{proof}

\begin{lemma}
If $\str C'$ can be obtained from $\str C$ by an inverse step of the magic completion algorithm which expanded edge $p$ to edges $q, r$, then $\str C'$ is a $K_2$-cycle.
\end{lemma}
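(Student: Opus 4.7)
The plan is to case-split on which variant of the $\oplus$-fork computes $p$ from $q,r$, and in each case produce a relabeling of the edges of $\str{C}'$ as $d$'s and $x$'s that verifies the $K_2$-inequality. The crucial structural information comes from Lemma~\ref{lem:structure:k2II}: every $d_i > K_2 \geq M$ and every $x_j < K_1 \leq M$. Together with the tension lemma (or Lemma~\ref{lem:tension3} if $\str{C}$ is a triangle), which rules out $p = M$, these bounds sharply restrict which forks can produce which edge.

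Concretely, if $p = d_i$ then $p > M$, which immediately kills both the $d^+$- and $d^C$-forks (each produces an output $< M$), leaving only the $d^-$-fork; if $p = x_i$ then $p < M$, which kills the $d^-$-fork (which produces an output $> M$), leaving $d^+$ or $d^C$. I will then handle the three surviving sub-cases separately. For $p = d_i$ via $d^-$-fork with (say) $q - r = d_i$, I relabel $q$ as a $d$-edge in place of $d_i$ and $r$ as a new $x$-edge, keeping $n' = n$; the new $K_2$-inequality, after cancelling $r$ from both sides, reduces to the $K_2$-inequality for $\str{C}$. For $p = x_i$ via $d^+$-fork with $q + r = x_i$, I relabel both $q, r$ as $x$-edges, keeping $n' = n$; then $\sum x$ is unchanged and the inequality is preserved verbatim. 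For $p = x_i$ via $d^C$-fork with $q + r = C - 1 - x_i$, I promote both $q, r$ to $d$-edges and set $n' = n + 1$.

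The one sub-case that requires a moment's thought is the $d^C$-fork: there the left-hand side of the new $K_2$-inequality gains $q+r$ while the right-hand side gains $(C-1) - x_i$ (extra $C-1$ from the shift $n \to n+1$, minus the removed $x_i$), and the identity $q + r = C - 1 - x_i$ makes these shifts equal, reducing the inequality to the original one. This is the main bookkeeping obstacle, but once one spots the matching of shifts the calculation is immediate. Finally, since $C = 2K_1 + 2K_2 + 1$ is odd in Case~\ref{II}, the operation $\oplus$ preserves the parity of $q + r$ in every fork (trivially for $d^+$ and $d^-$, and by oddness of $C$ for $d^C$); hence $\str{C}'$ retains odd perimeter and is indeed a $K_2$-cycle.
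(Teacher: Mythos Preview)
Your proof is correct and follows essentially the same approach as the paper's: use Lemma~\ref{lem:structure:k2II} to see that $d_i > K_2 \geq M$ and $x_j < K_1 \leq M$, so that expanding a $d$-edge must use the $d^-$-fork and expanding an $x$-edge must use the $d^+$- or $d^C$-fork, and then verify the $K_2$-inequality for $\str C'$ in each of the three resulting sub-cases (with $n' = n+1$ in the $d^C$ case). Your explicit parity remark via $C-1 = 2K_1+2K_2$ being even is a nice touch that the paper leaves implicit.
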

\begin{proof}
We know that $d_i>K_2$ and $x_j < K_1$ for every $i$ and $j$. This means that if $p=d_i$ for some $i$, then $q\oplus r = |q-r|$, say $q\geq r$, hence $d_i=q\oplus r = q-r$. But then $q+\sum_{j\neq i}d_j > n(C-1) + 2K_2 + r + \sum x_i$ and $\str C'$ is again a $K_2$-cycle.

Otherwise $p$ is $x_i$ for some $i$. Then $q\oplus r \in \{q+r, C-1-q-r\}$. If $q\oplus r = q+r$, then $\sum d_i > n(C-1) + 2K_2 + q+r+\sum_{x_j\neq x_i} x_j$ and $\str C'$ is a $K_2$-cycle. If $x_i=C-1-q-r$, then
$$\sum d_i + q + r > (n+1)(C-1) + \sum_{j\neq i} x_j$$
and $\str C'$ is a $K_2$-cycle with $n'=n+1$.
\end{proof}

\section{Non-metric cycles}
In the whole section we let $\str{C}$ be a non-metric cycle with edges $a, x_1, \ldots, x_k$ such that $a>\sum x_i$.
\begin{lemma}\label{lem:tension:nonm}
If $\str C$ has at least 4 vertices, then it has a tension.
\end{lemma}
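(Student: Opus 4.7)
The plan is to focus on the long edge $a$ and its neighbors, together with one adjacent pair among the $x_i$'s, and to derive a contradiction from the assumption that \emph{all} neighboring pairs $\oplus$-sum to $M$.

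Suppose for contradiction that $\str C$ has no tension. Write $\str C=(a,x_1,\dots,x_k)$ in cyclic order, so $a$ is adjacent to $x_1$ and $x_k$, and since $\str C$ has at least $4$ vertices we have $k\geq 3$. Consider the pair $a,x_1$. By the assumption $a\oplus x_1=M$, so by the definition of $\oplus$ we must have $|a-x_1|\leq M$; in particular $a\leq M+x_1$. Combining this with the hypothesis $a>\sum_{i=1}^k x_i$ gives
\[
M \;>\; \sum_{i=2}^k x_i.
\]

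Now consider the pair $x_{k-1},x_k$, which is genuinely a pair of two distinct edges of $\str C$ because $k\geq 3$, and both of them lie among $x_2,\dots,x_k$. Again $x_{k-1}\oplus x_k=M$, and by the definition of $\oplus$ the value $M$ is only attained when $\min(x_{k-1}+x_k,\,C-1-x_{k-1}-x_k)\geq M$; in particular $x_{k-1}+x_k\geq M$. But $x_{k-1}+x_k\leq \sum_{i=2}^k x_i<M$, a contradiction.

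The argument is genuinely short; the only point where one has to be careful is in unpacking exactly which inequality the clause ``otherwise'' of the definition of $\oplus$ hides, namely that $x\oplus y=M$ forces \emph{both} $|x-y|\leq M$ and $x+y\geq M$ (the latter because $\min(x+y,C-1-x-y)\geq M$). Everything else is immediate from $a>\sum x_i$ and $k\geq 3$, and in particular no case analysis on which of Cases~\ref{II} or~\ref{III} we are in is needed.
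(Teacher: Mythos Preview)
Your proof is correct and follows essentially the same approach as the paper's. The paper argues directly (if $a-x_1>M$ we have a tension; otherwise $\sum_{i\geq 2}x_i<a-x_1\leq M$, so any adjacent pair $x_i,x_{i+1}$ with $i\geq 2$ has sum $<M$ and gives a tension), while you phrase the same two observations as a proof by contradiction, but the content is identical.
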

\begin{proof}
If $a-x_1 > M$, then we found a tension. Otherwise $x_2+\ldots+x_k < a-x_1 \leq M$, so $x_i+x_{i+1} < M$ for every $2\leq i < k$.
\end{proof}

\begin{lemma}
If $\str C'$ can be obtained from $\str C$ by a step of the magic completion algorithm, then $\str C'$ is a non-metric cycle.
\end{lemma}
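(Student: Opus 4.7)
The plan is to analyse a step of the magic completion algorithm case-by-case, according to which pair of adjacent edges of $\str{C}$ is being merged and which of the $d^+$, $d^-$, $d^C$ forks is used. The statement is only nontrivial when $\str{C}$ has at least four vertices, and in that range Lemma~\ref{lem:tension:nonm} supplies a tension, so the newly created edge is not $M$ and the three forks above are the only possibilities.

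First I would handle the easy case in which both merged edges lie among $x_1,\ldots,x_k$, say $x_i$ and $x_{i+1}$. Since $x_i + x_{i+1} \leq \sum x_i < a \leq \delta$, while the $d^C$ fork would require $x_i + x_{i+1} > (C-1)/2 \geq \delta + \tfrac{1}{2}$, the $d^C$ fork is ruled out. The remaining two forks replace $x_i, x_{i+1}$ by an edge of length $x_i + x_{i+1}$ or $|x_i - x_{i+1}|$, so the total perimeter of the non-$a$ edges of $\str{C}'$ is at most $\sum x_i$, and $a$ continues to exceed it.

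The main obstacle is the case where $a$ is one of the merged edges; by symmetry take the other to be $x_1$. The $d^-$ fork gives $a - x_1$, and the new cycle $(a - x_1, x_2, \ldots, x_k)$ is non-metric because $a - x_1 > x_2 + \cdots + x_k$ follows directly from $a > \sum x_i$. The $d^+$ fork gives $a + x_1 > a > x_2 + \cdots + x_k$, which is again immediate. For the $d^C$ fork, the new edge is $C - 1 - a - x_1$; the inequality $C - 1 - a - x_1 > x_2 + \cdots + x_k$ rearranges to $C - 1 > a + \sum x_i$, and using $\sum x_i < a \leq \delta$ together with $C - 1 \geq 2\delta + 1$ gives $a + \sum x_i < 2a \leq 2\delta < C - 1$, as required.

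In every case $\str{C}'$ has a single edge strictly greater than the sum of the others, hence is a non-metric cycle in the sense of Definition~\ref{defn:forbcycles}, completing the proof.
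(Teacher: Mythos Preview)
Your proof is correct and follows the same overall scheme as the paper's: split according to whether the merged pair is $(x_i,x_{i+1})$ or $(a,x_1)$, rule out the $d^C$ fork in the first case, and check the remaining forks preserve non-metricity. The one genuine difference is how you treat the $d^C$ fork for the pair $(a,x_1)$. The paper argues that this fork is never the one selected by the step: since $\str C$ has at least four vertices there is an adjacent pair $x_i,x_j$, and from $x_i+x_j\le\sum x_i<a$ one gets $x_i+x_j<2\delta-a-x_1\le C-1-a-x_1$, so $t(x_i\oplus x_j)<t(C-1-a-x_1)$ and the step would act on that pair first. You instead accept the $d^C$ fork hypothetically and show the output is still non-metric via $C-1-a-x_1>\sum_{j\ge2}x_j$, which follows from $a+\sum x_i<2a\le2\delta<C-1$. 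Your route is shorter and avoids the time function entirely; the paper's route has the side benefit of pinning down which fork actually fires, but that is not needed for the lemma.

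One cosmetic remark: for the $d^C$ fork in the $(x_i,x_{i+1})$ case you write $x_i+x_{i+1}>(C-1)/2$, whereas the condition for $C-1-x-y$ to be the minimum is $x+y\ge(C-1)/2$. This makes no difference to the argument, since $(C-1)/2\ge\delta+\tfrac12$ and $x_i+x_{i+1}$ is an integer bounded by $\sum x_i<a\le\delta$.
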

\begin{proof}
As $\str C$ has at least four vertices, there is tension.

First suppose that the completed fork was $x_i, x_{i+1}$. If $x_i\oplus x_{i+1} \in \{x_i+x_{i+1}, |x_i-x_{i+1}|\}$ then $\str C'$ is still non-metric (the second possibility actually never happens). We know that $x_i+x_j < a \leq \delta$, so $C-1-x_i-x_j >C-1-\delta \geq \delta$, so $x_i\oplus x_{i+1}\neq C-1-x_i-x_j$.

Otherwise the completed fork was without loss of generality $a,x_1$. If $a\oplus x_1 \in \{|a-x_1|, a+x_1\}$ then $\str C'$ is still non-metric (the second possibility again actually never happens). We know that $x_1+x_2+x_3 < a \leq \delta$, so $a+x_1+x_2+x_3 < 2\delta$. But this means that $x_2+x_3 < 2\delta-a-x_1 < C-1-a-x_1$, hence $t(x_2+x_3) < t(C-1-a-x_1)$, so $a\oplus x_1 \neq C-1-a-x_1$.
\end{proof}

\begin{lemma}
If $\str C'$ can be obtained from $\str C$ by an inverse step of the magic completion algorithm which expanded edge $p$ to edges $q, r$, then $\str C'$ is a non-metric cycle or a $C$-cycle with $n=1$.

Moreover, if $C'>C+1$, then in Case~\ref{III} $\str C'$ is always a non-metric cycle and in Case~\ref{IIb} it holds that $\str C'$ is a $C_1$-cycle.
\end{lemma}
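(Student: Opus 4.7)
The plan is to split into six subcases according to which edge $p$ of $\str C$ is expanded (either $p = a$ or $p = x_i$) and which fork type realizes $p = q\oplus r$ ($d^+$, $d^-$, or $d^C$). Three subcases will directly produce $\str C'$ in the claimed form, and the other three will be ruled out by time-ordering, using that the inverse step requires $(q,r)$ to be the \emph{first} fork completed by the magic algorithm on $\str C'$, so any other fork in $\str C'$ with strictly smaller $t$-value yields a contradiction. Throughout, $k \geq 2$ because $\str C$ has at least three vertices.

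The three positive subcases are direct. If $p = x_i$ via $d^+$, then $q + r = x_i$, the perimeter is unchanged, and $a > \sum_j x_j$ carries over to $\str C'$, making it non-metric with long edge $a$. If $p = a$ via $d^-$, then WLOG $q = a + r$, and $a > \sum_j x_j$ gives $q > r + \sum_j x_j$, so $\str C'$ is non-metric with long edge $q$. If $p = x_i$ via $d^C$, then $q + r = C - 1 - x_i$, and the identity $a + q + r - \sum_{j\neq i} x_j = a + C - 1 - \sum_j x_j > C - 1$ (using $a > \sum_j x_j$) exhibits $(a, q, r)$ as the three ``big'' edges witnessing $\str C'$ as a $C$-cycle with $n = 1$.

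The three impossible subcases use similar time-ordering arguments. If $p = x_i$ via $d^-$, then $x_i > M$; for any $x_j \neq x_i$ adjacent to $a$ in $\str C'$ (one such exists since $k \geq 2$), the chain $a > \sum_{j'} x_{j'} \geq x_i + x_j > M + x_j$ gives $a - x_j > x_i$, so the fork $(a, x_j)$ uses $d^-$ and completes at $t(a - x_j) = 2(\delta - (a - x_j)) < 2(\delta - x_i) = t(x_i)$, contradicting validity of the inverse step. If $p = a$ via either $d^+$ or $d^C$, then $a < M$ in both cases; any adjacent $x_j, x_{j+1}$ in $\str C'$ satisfies $x_j + x_{j+1} \leq \sum_{j'} x_{j'} < a < M$, so the $d^+$ fork for this pair completes at $t(x_j + x_{j+1}) = 2(x_j + x_{j+1}) + 1 < 2a + 1 = t(a)$, again a contradiction.

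For the \emph{moreover} clause, in Case~\ref{III} Fact~\ref{fact:oplus}(4) rules out $d^C$ entirely, so only the two non-metric outcomes survive. In Case~\ref{IIb}, Fact~\ref{fact:oplus}(5) restricts $d^C$ to $\delta\oplus\delta = K_2 - 1$; thus the $d^C$-subcase forces $q = r = \delta$ and $x_i = K_2 - 1$, and $\str C'$ is a $C$-cycle with $n = 1$. Since $C = C_1$ in IIb, to conclude that $\str C'$ is a $C_1$-cycle it remains to verify odd perimeter. Fact~\ref{fact:oplus}(3) propagates parity through the inverse step (the perimeter changes by $2\delta - (K_2 - 1) = 2K_2 + 2$, which is even). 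The main technical obstacle will be a short time-ordering analysis of the forks adjacent to the newly inserted $\delta$-pair in $\str C'$: together with $K_2$ being odd in IIb (forced by $3K_2 = 2\delta - 1$), this will show that the inverse step is valid only when $\str C$ has odd perimeter, so $\str C'$ inherits odd perimeter and is the desired $C_1$-cycle.
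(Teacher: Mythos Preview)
Your six-subcase decomposition and the treatment of the three positive and three impossible subcases match the paper's argument essentially line for line, and your handling of the Case~\ref{III} moreover via Fact~\ref{fact:oplus}(4) is exactly what the paper does.

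The gap is in the Case~\ref{IIb} moreover. You correctly set up $q=r=\delta$, $x_i=K_2-1=M-1$, and that parity is preserved, but then you stop at a plan: ``a short time-ordering analysis of the forks adjacent to the newly inserted $\delta$-pair'' together with ``$K_2$ being odd''. This is not carried out, and as written it is not clear it leads anywhere --- analysing the forks $(\delta,e)$ for $e$ a neighbour of the new $\delta$-edges constrains those neighbours to equal $\delta-M$, which by itself does not force odd perimeter, and the role you intend for the parity of $K_2$ is left unexplained.

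The paper's argument for ruling out even perimeter is different and short: look instead at a fork $(a,x_j)$ with $x_j$ adjacent to $a$ and $j\neq i$ (such a $j$ exists since $k\geq 2$). If the perimeter of $\str C$ is even then $a-\sum_l x_l$ is a positive even integer, hence $\geq 2$, so
\[
a - x_j \;\geq\; 2 + \sum_{l\neq j} x_l \;\geq\; 2 + x_i \;=\; M+1.
\]
Then $t(a-x_j)=2(\delta-(a-x_j))\leq 2\delta-2M-2$, while $t(x_i)=t(M-1)=2M-1$; since $M\geq\lceil\delta/2\rceil$ one has $2\delta-2M-2<2M-1$, so $t(a-x_j)<t(x_i)$, contradicting validity of the inverse step. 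Hence the perimeter of $\str C$ must be odd, and since $C=C_1$ in Case~\ref{IIb}, the $C$-cycle $\str C'$ is a $C_1$-cycle. Note that the key fork sits at the edge $a$, not at the $\delta$-pair, and no appeal to the parity of $K_2$ is needed.
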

\begin{proof}
If $p=x_i$ for some $i$ and $p = q\oplus r = q+r$, then $\str C'$ is non-metric. The same holds is $p=a$ and $q\oplus r = q-r$.

It never happens that $p=a$ and $q\oplus r \in \{q+r, C-1-q-r\}$, because in both cases we have $x_1+x_2 < a = q\oplus r$, a contradiction with time.

It also never happens that $p=x_i$ for some $i$ and $q\oplus r = q-r$, because there is $j\neq i$ such that $x_j$ is adjacent to $a$. And then obviously $a-x_j > x_i > M$, so $t(a-x_j)<t(x_i)$, which is a contradiction.

The last possibility is $p=x_i$ for some $i$ and $p=q\oplus r = C-1-q-r$. If $C'=C+1$, then we have
$$a+q+r > C-1 + \sum_{j\neq i}x_j,$$
i.e. $\str C'$ is a $C$-cycle.

If the parameters come from Case~\ref{III} and $C' > C+1$, then we have already observed that $C-1-x-y\geq M$, hence this never happens.

So it remains to verify what happens when $p=x_i = q\oplus r = C-1-q-r$ when the parameters come from Case~\ref{IIb}. And this is unfortunately going to need some more case analysis.

In that case there is only one possibility, namely $x_i = K_1-1 = K_2-1=M-1$ and $q=r=\delta$. As in Case~\ref{II} the $d^C$-fork preserves parity ($C=C_1$), then if the perimeter of $\str C$ was an odd number, then also the perimeter of $\str C'$ is odd and
$$a+q+r > C-1 + \sum_{j\neq i}x_j,$$
hence $\str C'$ is a $C_1$-cycle.

If the perimeter of $\str C$ was an even number, then also $a-\sum x_i$ is an even number, so in particular $a-\sum x_i\geq 2$. As $x_i = M-1$, we know that $a-x_j\geq M+1$ for all $j\neq i$. But $t(M-1) = 2M-1$ while $t(M+1) = 2\delta - 2M - 2 \leq 2M - 2$ as $M\geq \frac{\delta}{2}$. Hence $t(a-x_j) < t(x_i)$, so this is a contradiction with time.
\end{proof}

\section{$C_0$- and $C_1$- cycles with $n = 1$, Case~\ref{III} and $C'>C+1$}
In the whole section we let $\str{C}$ be a $C_0$- or $C_1$- cycle with edges $d_0, d_1, d_2, x_1, \ldots, x_k$. Suppose further that the parameters come from Case~\ref{III} with $C'>C+1$. As we know that in this case the $d^C$-fork is never used, it follows that $x\oplus y$ preserves parity unless $x\oplus y=M$.

In the remainder of section we can assume that $\str C$ is, say, a $C_0$-cycle, for $C_1$-cycles the same proofs will work.

\begin{lemma}\label{lem:structure:CIIIb}
It holds that $d_i\geq K_2$ for every $i$ and $\sum x_i \leq K_1$.
\end{lemma}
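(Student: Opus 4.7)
The plan is to derive both bounds directly from the defining inequality $d_0 + d_1 + d_2 > C_* - 1 + \sum_{i=1}^k x_i$ of a $C_*$-cycle with $n=1$ (writing $C_* \in \{C_0, C_1\}$ for whichever of the two parameters the cycle uses), by combining it with the trivial bound $d_i \leq \delta$ and with the strengthened admissibility estimate $C_* \geq C \geq 2\delta + K_2$, which holds precisely because we are in Case~\ref{III} with $C'>C+1$.

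For the first assertion I would argue by contradiction: if $d_j \leq K_2 - 1$ for some $j \in \{0,1,2\}$, then bounding the remaining two distances by $\delta$ yields $d_0+d_1+d_2 \leq 2\delta + K_2 - 1 \leq C - 1 \leq C_* - 1 + \sum x_i$, contradicting the defining inequality. For the second assertion I would rearrange the defining inequality as $\sum x_i < (d_0+d_1+d_2) - (C_*-1) \leq 3\delta - C_* + 1 \leq \delta - K_2 + 1$, which gives $\sum x_i \leq \delta - K_2$, and then combine this with $K_1 + K_2 \geq \delta$ to conclude $\sum x_i \leq K_1$.

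There is no real obstacle in any of this---the arithmetic is routine---but I would spend one sentence verifying that $K_1 + K_2 \geq \delta$ in Case~\ref{III}, since it is not explicitly listed among the admissibility conditions in Theorem~\ref{thm:admissible}. It follows from $K_1 + 2K_2 \geq 2\delta - 1$ together with the general constraints $K_1 \geq 1$ and $K_2 \leq \delta$, because the potential borderline $K_1 + K_2 = \delta - 1$ would force $K_2 \geq \delta$ and hence $K_1 \leq -1$, a contradiction.
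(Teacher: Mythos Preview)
Your proof is correct and follows essentially the same line as the paper's. Both arguments use the defining inequality together with $d_i\le\delta$ and the Case~\ref{III} bound $C\ge 2\delta+K_2$ (valid here because $C'>C+1$). For the second part the paper argues by contradiction---assuming $\sum x_i>K_1$ it derives $K_2\ge\delta$ via $K_1+2K_2\ge 2\delta-1$ and then disposes of the edge case $K_2=\delta$---whereas you first obtain the sharper intermediate bound $\sum x_i\le\delta-K_2$ and then invoke $K_1+K_2\ge\delta$; these are the same computation rearranged. One small wording issue: your ``borderline'' argument for $K_1+K_2\ge\delta$ should really treat the full range $K_1+K_2\le\delta-1$, not just equality, but the arithmetic you wrote works verbatim for the inequality as well.
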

\begin{proof}
As $d_0+d_1+d_2 \geq C \geq 2\delta + K_2$, the first part follows.

Suppose that $\sum x_i > K_1$. Then
$$3\delta \geq \sum d_i \geq C + \sum x_i > 2\delta+K_2+K_1.$$
But $2K_2+K_1\geq 2\delta-1$, so we get
$$3\delta> 2\delta+(2\delta-1-K_2),$$
which implies $K_2\geq\delta$.

But if $K_2 = \delta$, then $C\geq 3\delta$, which means that $k=0$, i.e.~there are no $x_i$'s, so trivially $0 = \sum x_i \leq K_1$.
\end{proof}

\begin{lemma}\label{lem:tension:CIIIb}
If $\str C$ has at least 4 vertices, then it has a tension.
\end{lemma}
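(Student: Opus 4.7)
The plan is to argue by contradiction: suppose $\str{C}$ has at least four vertices and no tension, so every pair of neighbouring edges $a,b$ in $\str{C}$ satisfies $|a-b|\le M$ and $a+b\ge M$. Under the standing hypothesis that we are in Case~\ref{III} with $C'>C+1$, Fact~\ref{fact:oplus}(4) tells us the $d^C$-fork is never used, which makes these two inequalities equivalent to $a\oplus b=M$. I will combine this with the structural facts from Lemma~\ref{lem:structure:CIIIb} --- each $d_i\ge K_2\ge M$ and $\sum_j x_j\le K_1\le M$ --- together with the magic bound $2M\le C-\delta-1$ and the stronger bound $C\ge 2\delta+K_2$ (with $M\le K_2-1$ in the subcase $C=2\delta+K_2$ by the extra condition in Definition~\ref{defn:magicdistance}).

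First I would reduce to the case in which no two $x$-edges of $\str{C}$ are adjacent. If $x_i,x_{i+1}$ are adjacent and $k\ge 3$, then $x_i+x_{i+1}\le K_1-(k-2)\le K_1-1<M$, already giving a tension. If instead $k=2$ and the two $x$-edges are adjacent (so cyclically $\str{C}=(d_0,d_1,d_2,x_1,x_2)$), no tension forces $d_0\le M+x_2$ and $d_2\le M+x_1$, while $d_1\le\delta$; summing gives $d_0+d_1+d_2\le 2M+\delta+x_1+x_2$, which together with $\sum d_i>C-1+x_1+x_2$ yields $2M+\delta\ge C$, contradicting $2M\le C-\delta-1$. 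Thus no two $x$-edges are adjacent, and since the three $d$-edges split the cycle into at most three gaps we must have $k\le 3$.

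Next I close the remaining cases $k=3,2,1$ by variations on the same sum argument. For $k=3$ the cycle alternates, each $d_i$ lies between two $x$-edges, and no tension gives $d_i\le M+\min$ of its two neighbours; summing and using $\min(a,b)\le(a+b)/2$ yields $\sum d_i\le 3M+\sum x_j$, whence $3M\ge C$, and combined with $2M\le C-\delta-1$ this forces $C\ge 3\delta+3$, contradicting $C\le 3\delta+2$. For $k=2$ with the $x$-edges non-adjacent (cycle $(d_0,d_1,x_1,d_2,x_2)$), if both $x_i\le\delta-M$ then the bound $\sum d_i\le 3M+x_1+x_2+\min(x_1,x_2)$ combined with $\min(x_1,x_2)\le\delta-M$ gives $2M+\delta\ge C$, again a contradiction; otherwise WLOG $x_2>\delta-M$, so $d_0\le\delta$ is binding, and splitting on whether $x_1\le\delta-M$ or $x_1>\delta-M$ reduces to either $0\ge C-\delta-2M\ge 1$ (using $x_1\le x_2$) or to $\delta+2M>C$ (using $\sum d_i\le 3\delta$ together with $x_1+x_2>2(\delta-M)$), both contradictions. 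For $k=1$ (cycle $(d_0,d_1,d_2,x_1)$) the dichotomy $M+x_1\le\delta$ versus $M+x_1>\delta$ forces either $C\le 2\delta+M$ (from $C-2M-\delta\le x_1\le\delta-M$) or $M>C-2\delta$ (from $\delta-M<x_1\le 3\delta-C$), each incompatible with $C\ge 2\delta+K_2$ together with $M\le K_2$ (or $M\le K_2-1$ when $C=2\delta+K_2$).

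The main obstacle, and the subtlest part of the proof, is the $k=1$ and $k=2$-non-adjacent cases: neither the naive sum bound $\sum d_i\le 3M+\sum x_j$ nor the trivial $\sum d_i\le 3\delta$ is individually strong enough to close them, so one has to take the minimum of the two on an edge-by-edge basis, which is what forces the case split on whether $M+x_1\le\delta$. Once this split is made, the two resulting linear inequalities each collapse to the single magic-bound contradiction $2M\le C-\delta-1$, and the extra rigidity provided by $C\ge 2\delta+K_2$ (which is strictly stronger than the defining $C>2\delta+K_1$ of Case~\ref{III} and is available precisely because we assume $C'>C+1$) is exactly what lets the argument go through.
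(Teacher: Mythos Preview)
Your proof is correct, but it takes a noticeably longer and more case-heavy route than the paper's. Both arguments start from the same observation: if there is no tension and $d_i$ is adjacent to some $x_j$, then $d_i\le M+x_j$. From there you split on the number and adjacency pattern of the $x$-edges ($k\ge 3$ with adjacent $x$'s, $k=2$ adjacent, $k=3$ alternating, $k=2$ non-adjacent, $k=1$), in each case summing all three bounds on the $d_i$ and confronting the result with the $C$-inequality and with $2M\le C-\delta-1$. The paper instead avoids the case split entirely: since $k\ge 1$, at least two of the $d_i$'s (say $d_0,d_1$) border some $x$-edge, so $d_0+d_1\le 2M+x_i+x_j$; feeding this into $\sum d_\ell\ge C+\sum x_\ell$ gives a lower bound $d_2\ge C-2M-x_i$, and then the single estimate $x_i\le 3\delta-C$ together with $C\ge 2\delta+K_2$ and the magic condition $M<K_2$ (or $C>2\delta+K_2$) yields $d_2>\delta$ outright.

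What your approach buys is that each individual case is a transparent linear-inequality check, and you make explicit exactly where the hypothesis $C\ge 2\delta+K_2$ (available because $C'>C+1$ in Case~\ref{III}) and the extra magic condition $M<K_2$ when $C=2\delta+K_2$ are needed, namely in the $k=1$ endgame. The paper's approach buys brevity: by lower-bounding a single $d_i$ rather than upper-bounding the sum, it handles all adjacency patterns at once and never needs to know how the $x$-edges are arranged.
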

\begin{proof}
Without loss of generality suppose that $d_0$ is adjacent to $x_i$ and $d_1$ is adjacent to $x_j$ (it is possible that $i=j$). If $d_0-x_i>M$ or $d_1-x_j>M$, we have found a tension. If this does not happen, then $d_0\leq M+x_i$ and $d_1\leq M+x_j$. Thus we get
$$d_2 + 2M + x_i + x_j \geq d_0+d_1+d_2 \geq C+\sum x_i\geq C+x_j,$$
that is
$$d_2 \geq C-2M-x_i.$$
We know that $x_i\leq \sum x_i \leq \sum d_i - C\leq 3\delta-C$, so
$$d_2\geq 2C-2M-3\delta\geq 4\delta + 2K_2-2M - 3\delta.$$
But from the conditions on $M$ we get $M<K_2$, which means $d_2\geq \delta + 2(K_2-M) > \delta$, a contradiction.

Note that the previous argument holds even for $k=1$ (then the estimate $\sum x_i \geq x_j$ is tight).
\end{proof}

\begin{lemma}
If $\str C'$ can be obtained from $\str C$ by a step of the magic completion algorithm, then $\str C'$ is $C_0$-cycle.
\end{lemma}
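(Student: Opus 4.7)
The plan is to split according to which kind of fork the step completes and verify in each admissible case that $\str C'$ remains a $C_0$-cycle. If $k=0$ then $\str C$ is a triangle and no step applies, so I may assume $k\geq 1$; in particular $\str C$ has at least four vertices. By Lemma~\ref{lem:tension:CIIIb}, $\str C$ then has a tension, so the preceding tension lemma forces the newly added edge to have length different from $M$. Moreover, since the parameters come from Case~\ref{III} with $C'>C+1$, item~(4) of Fact~\ref{fact:oplus} tells us that the $d^C$-fork is never used, so for any completed fork $u,v$ the value $u\oplus v$ lies in $\{u+v,|u-v|,M\}$, with $M$ already excluded.

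First I would consider a fork of two adjacent short edges $x_i,x_{i+1}$. By Lemma~\ref{lem:structure:CIIIb} both are at most $K_1\leq M$, so $|x_i-x_{i+1}|\leq M$ rules out the $d^-$-value and forces $x_i\oplus x_{i+1}=x_i+x_{i+1}$. The resulting cycle keeps $d_0,d_1,d_2$ untouched and has the same total length of short edges, so both the $C_0$-inequality and the even parity of the perimeter are preserved.

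Second, I would consider a fork of some $d_i$ with some $x_j$. By Lemma~\ref{lem:structure:CIIIb}, $d_i\geq K_2\geq M$, so $d_i+x_j\geq M$ rules out the $d^+$-value, leaving $d_i\oplus x_j=d_i-x_j$. In $\str C'$ the three long edges then sum to $d_0+d_1+d_2-x_j$ and the short edges sum to $\sum_\ell x_\ell-x_j$; subtracting $x_j$ from both sides of the $C_0$-inequality for $\str C$ yields the $C_0$-inequality for $\str C'$, and the perimeter changes by $-2x_j$, preserving its parity.

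Finally I would rule out a fork of two adjacent long edges $d_i,d_{i+1}$. In Case~\ref{III} one has $K_1+2K_2\geq 2\delta-1$, and combined with $K_2\leq\delta$ (handling the subcase $K_2=\delta$ directly from $K_1\geq 1$) this yields $K_1+K_2\geq\delta$. Hence $|d_i-d_{i+1}|\leq\delta-K_2\leq K_1\leq M$, excluding the $d^-$-value, while $d_i+d_{i+1}\geq 2K_2\geq 2M$ excludes the $d^+$-value. Thus $d_i\oplus d_{i+1}=M$, which is excluded by tension, and no such fork is ever the one chosen. The only real obstacle, modest though it is, is establishing $K_1+K_2\geq\delta$ in Case~\ref{III}, without which the impossibility of the long--long fork would not follow.
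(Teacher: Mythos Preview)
Your proof is correct and follows the same three-way case split as the paper's proof, using tension to rule out $M$ and Fact~\ref{fact:oplus}(4) to rule out the $d^C$-fork globally, then checking that the surviving fork in each case preserves the $C_0$-inequality and parity. The one place you do more work than necessary is the long--long fork: you derive $K_1+K_2\geq\delta$ in Case~\ref{III} to bound $|d_i-d_{i+1}|\leq\delta-K_2\leq K_1\leq M$, but the paper (implicitly) uses the simpler route $3K_2\geq 2\delta\Rightarrow \delta-K_2\leq\delta/3<\lceil\delta/2\rceil\leq M$, so your stated ``only real obstacle'' is not actually needed. Otherwise the arguments coincide.
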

\begin{proof}
There are three possibilities. Clearly $x_i+x_j\leq \sum x_i\leq K_1\leq M$, so $x_i\oplus x_j = x_i+x_j$, which preserves the $C_0$ inequality.

As $d_i\geq K_2$ for every $i$, it follows that $d_i\oplus d_j = M$, so this is never used in the step.

Finally $d_i\oplus x_i\in \{M, d_i-x_i\}$, we know that there is a tension in $\str C$, so $d_i\oplus x_i = d_i-x_i$, which again preserves the inequality.
\end{proof}

\begin{lemma}
If $\str C'$ can be obtained from $\str C$ by an inverse step of the magic completion algorithm which expanded edge $p$ to edges $q, r$, then $\str C'$ is a $C_0$-cycle.
\end{lemma}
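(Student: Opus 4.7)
The plan is a straightforward two-case analysis on the type of the expanded edge $p$, mirroring the arguments of the previous sections. First, by Lemma~\ref{lem:tension:CIIIb} if $\str C$ has at least four vertices, or by Lemma~\ref{lem:tension3} in the triangle case, $\str C$ carries enough tension that the expanded edge $p$ cannot have length $M$. Throughout I would also use two structural features of the present regime: by the extra condition on $M$ in Case~\ref{III} with $C'>C+1$ we have $M<K_2$ strictly, and by Fact~\ref{fact:oplus}(4) the $d^C$-fork is never used. Together with $p\neq M$, these ensure that in each case below exactly one form of the fork is forced.

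If $p=d_i$ for some $i\in\{0,1,2\}$, then by Lemma~\ref{lem:structure:CIIIb} we have $d_i\geq K_2>M$. The only fork whose output can exceed $M$ is the $d^-$-fork, so $q\oplus r=|q-r|=d_i$; assuming $q\geq r$ without loss of generality, this gives $q=d_i+r$. In $\str C'$ I would reclassify $q$ as a d-type edge (taking the role of $d_i$) and $r$ as a new x-type edge. The $C_0$-inequality for $\str C'$ then reads $d_0+d_1+d_2-d_i+q>C_0-1+\sum_l x_l+r$, which collapses exactly to the $C_0$-inequality for $\str C$; the perimeter changes by $q+r-d_i=2r$, so even parity is preserved.

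If instead $p=x_i$, then by Lemma~\ref{lem:structure:CIIIb} we have $x_i\leq\sum_l x_l\leq K_1\leq M$, and since $p\neq M$ we get $x_i<M$ strictly. The $d^-$-fork would force $|q-r|>M>x_i$, a contradiction; the $d^C$-fork is excluded as above; so $q\oplus r=q+r=x_i$. The cycle $\str C'$ therefore has the same sum of x-type edges and the same perimeter as $\str C$, and the $C_0$-classification carries over verbatim. I do not anticipate a serious obstacle; the only subtle point is using the two structural observations above (the strict inequality $M<K_2$ and the absence of the $d^C$-fork) to pin down the fork type in each case.
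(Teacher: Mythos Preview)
Your proof is correct and follows exactly the same two-case split as the paper's own (very terse) proof: $p=d_i$ forces the $d^-$-fork, $p=x_i$ forces the $d^+$-fork, and in each case the $C_0$-inequality and even parity are preserved. You simply spell out the justifications (via Lemma~\ref{lem:structure:CIIIb}, Fact~\ref{fact:oplus}(4), and the tension argument for $p\neq M$) that the paper leaves implicit.
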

\begin{proof}
If $p=d_i$ for some $i$, then necessarily $q\oplus r = q-r$ and $\str C'$ is indeed a $C_0$-cycle. Otherwise $p=x_i$ for some $i$ and $q\oplus r = q+r$, so $\str C'$ is again a $C_0$-cycle.
\end{proof}

\section{$C_0$-cycles and $C_1$-cycles with $n = 1$ or the cycle $(5,5,5,5,5)$, Case~\ref{IIb}}\label{sec:c_1iib}
In the whole section we suppose that the parameters come from Case~\ref{IIb} and let $\str{C}$ be a $C_0$- or $C_1$-cycle with edges $d_0, d_1, d_2, x_1, \ldots, x_k$ or the cycle $(5,5,5,5,5)$ if $\delta=5$. It holds that $a\oplus b = C-1-a-b$ only if $a=b=\delta$, that is, $\delta\oplus\delta = K_1-1=K_2-1=M-1$. As $C=2K_1+2K_2+1$, we know that the $d^C$ fork preserves parity, so $a\oplus b$ preserves parity unless $a\oplus b = M$.

\begin{lemma}\label{lem:structure:CIIb}
Suppose than $\str C\neq (5,5,5,5,5)$. Then $d_i\geq K_2$ for every $i$ and $\sum x_i \leq \delta-K_2 < K_1$.
\end{lemma}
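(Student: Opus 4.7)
The plan is to proceed by direct computation exploiting the tight numerical constraints of Case~\ref{IIb}, namely $K_1 = K_2$, $3K_2 = 2\delta - 1$, and $C = 4K_2 + 1$. The first observation to record is that in Case~\ref{IIb}, $C = C_1 = 4K_2+1$ (it is odd), while $C_0 = C'$ is even and satisfies $C_0 > C+1 = 4K_2+2$; hence $C_0 \geq 4K_2+4$. These two explicit lower bounds on $C_0$ and $C_1$ are what will drive the whole proof.

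For the bound $d_i \geq K_2$, I would argue by contradiction: assuming without loss of generality $d_2 \leq K_2 - 1$, the defining inequality of a $C_?$-cycle (with $?\in\{0,1\}$) sharpens by integrality to $d_0+d_1+d_2 \geq C_? + \sum x_i$, so $d_0 + d_1 \geq C_? - K_2 + 1 + \sum x_i$. Combining this with $d_0 + d_1 \leq 2\delta = 3K_2 + 1$ and substituting the two values of $C_?$ yields $3K_2 + 1 \geq 3K_2 + 2 + \sum x_i$ in the $C_1$-case and $3K_2 + 1 \geq 3K_2 + 5 + \sum x_i$ in the $C_0$-case; both are absurd.

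For the bound $\sum x_i \leq \delta - K_2 < K_1$, I would use the trivial upper estimate $3\delta \geq d_0+d_1+d_2 \geq C_? + \sum x_i$, so $\sum x_i \leq 3\delta - C_?$. Doubling and substituting $2\delta = 3K_2 + 1$ yields $2\sum x_i \leq K_2 + 1$ in the $C_1$-case and $2\sum x_i \leq K_2 - 5$ in the $C_0$-case, in either case giving $\sum x_i \leq (K_2+1)/2 = \delta - K_2$. The strict inequality $\delta - K_2 < K_1$ then follows from $K_1 = K_2$ and the observation that Case~\ref{IIb} admissibility forces $\delta \geq 5$ and hence $K_2 \geq 3$, so $(K_2+1)/2 < K_2$.

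The proof is essentially routine arithmetic and presents no real obstacle. The one subtlety worth flagging is the parity of $C_0$: since $C_0$ is \emph{even} and must strictly exceed $C+1 = 4K_2+2$, the correct lower bound is $4K_2+4$ rather than $4K_2+3$, and this extra slack of $1$ is precisely what makes the $C_0$ estimates work. In the borderline regime $\delta = 5$ the $C_0$-bound even becomes $\sum x_i \leq (K_2-5)/2 = -1$, which simply reflects the fact that no $C_0$-cycle with $n=1$ exists in that case.
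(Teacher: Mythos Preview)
Your proof is correct and follows the same basic strategy as the paper's: combine the defining inequality $d_0+d_1+d_2 \geq C_? + \sum x_i$ with the trivial bounds $d_i \leq \delta$. The paper, however, dispenses with the $C_0$/$C_1$ case split entirely by noting the single identity $C = 4K_2+1 = (2\delta-1)+K_2+1 = 2\delta+K_2$. Since $C_? \geq C$ in either case, one line
\[
d_0+d_1+d_2 \;\geq\; C+\sum x_i \;=\; 2\delta+K_2+\sum x_i
\]
already gives $d_i \geq K_2$ (subtract $2\delta$ from both sides using $d_j\leq\delta$ for $j\neq i$) and $\sum x_i \leq 3\delta - (2\delta+K_2) = \delta-K_2$. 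Your separate treatment of $C_0$ and $C_1$, and in particular the parity observation $C_0\geq 4K_2+4$, is valid but unnecessary: the weaker bound $C_0\geq C$ already suffices.
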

\begin{proof}
We have $C = 3K_2 + K_2 + 1 = 2\delta-1 + K_2 + 1 = 2\delta + K_2$. And
$$\sum_{i=0}^2 d_i \geq C+\sum x_i \geq 2\delta+K_2+\sum x_i.$$
From this the statement follows.
\end{proof}

\begin{lemma}\label{lem:tension:CIIb}
If $\str C$ has at least 4 vertices, then it has a tension.
\end{lemma}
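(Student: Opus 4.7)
The plan is to mirror the strategy of Lemma~\ref{lem:tension:CIIIb}, but to separately handle both the exceptional cycle $(5,5,5,5,5)$ and the complications specific to Case~\ref{IIb}: the $d^C$-fork actually acts here (so $\delta\oplus\delta=M-1$), and when only one $x$-edge is present the naive sum estimate becomes too weak. First I dispose of $(5,5,5,5,5)$ directly: any two adjacent edges give $5\oplus 5=C-1-10=2=M-1\neq M$, a tension.

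Second, assuming $\str{C}\neq (5,5,5,5,5)$, Lemma~\ref{lem:structure:CIIb} gives $d_i\geq K_2=M$ for each $i$ and $\sum_j x_j<K_2=M$. I would then make the following easy reduction: if two $x$-edges $x_i,x_{i+1}$ are adjacent in $\str{C}$, then $x_i+x_{i+1}\leq \sum_j x_j<M$ while $C-1-x_i-x_{i+1}>3K_2>M$, so by the definition of $\oplus$ one has $x_i\oplus x_{i+1}=x_i+x_{i+1}<M$, which is a tension. So the interesting case is that no two $x$-edges are adjacent; since there are only three $d$-edges and thus only three gaps, this forces $k\leq 3$, and $k\geq 1$ because $\str{C}$ has at least four vertices.

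For each $k\in\{1,2,3\}$ I argue by contradiction, assuming every pair of adjacent edges $\oplus$-sums to $M$. The basic facts are: adjacent $(d,x)$ with $d\geq M>x$ forces $d\leq M+x$, and adjacent $(d,d')$ with $d,d'\geq M$ forces not both $d=d'=\delta$ (otherwise $\delta\oplus\delta=M-1$ is a tension). For $k=2$ and $k=3$ every $d$-edge has at least one $x$-neighbour, so summing the three resulting inequalities $d\leq M+x$ and comparing with $\sum_i d_i\geq C+\sum_j x_j$ yields $C\leq 3M+(\text{correction})$, where the correction is bounded above by $\sum_j x_j<K_2$; this contradicts $C=4K_2+1=3M+K_2+1$.

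The main obstacle is the case $k=1$, where all three $d$-edges are consecutive (say $d_a,d_b,d_c$ with $d_b$ the middle one) and the single edge $x_1$ is adjacent only to $d_a$ and $d_c$, so $d_b$ has no $x$-neighbour and the simple sum is too weak. Here I would extract extremal values stepwise: the inequalities $d_a,d_c\leq M+x_1$ together with $d_a+d_b+d_c\geq C+x_1$ force $d_b\geq C-2M-x_1=2K_2+1-x_1\geq \delta$, so $d_b=\delta$ and $x_1$ attains its maximum $\delta-K_2$; substituting back gives $d_a+d_c\geq 2\delta$, hence $d_a=d_c=\delta$; but then $d_a\oplus d_b=\delta\oplus\delta=M-1\neq M$ is already a tension, contradicting the assumption and completing the proof.
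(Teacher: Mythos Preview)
Your argument is correct. The overall strategy matches the paper's (assume no tension and derive a contradiction from the $C_x$-inequality), but the execution differs in two places.

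For $k\geq 2$ the paper does not first reduce to $k\leq 3$; instead it just names two $d$-edges $d_0,d_1$ adjacent to \emph{distinct} $x$-edges $x_i,x_j$ (which is possible once $k\geq 2$), bounds $d_0+d_1\leq 2M+x_i+x_j$, and then reads off $d_2\geq C+\sum x-2M-x_i-x_j\geq C-2M=2K_2+1>\delta$, an immediate contradiction. Your route (eliminate adjacent $x$'s, then bound all three $d$'s by their $x$-neighbours and sum) is equally valid; it just trades one observation (``two $d$'s with distinct $x$-neighbours exist'') for another (``no two $x$'s adjacent $\Rightarrow k\leq 3$'').

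For $k=1$ the paper takes a pure summing approach: from the no-tension assumption it gets $d_0,d_1\leq M+x_1$ and, since adjacent $d$'s cannot both equal $\delta$, also $d_0+d_2\leq 2\delta-1$ and $d_1+d_2\leq 2\delta-1$; adding these four gives $2\sum d_i\leq 2(M+x_1)+2(2\delta-1)$, hence $\sum d_i\leq 2\delta+M+x_1-1$, contradicting $\sum d_i\geq C+x_1=2\delta+K_2+x_1$. Your argument is instead extremal: you push $d_b$ up to $\delta$, pin $x_1=\delta-K_2$, then force $d_a=d_c=\delta$ and exhibit the $\delta\oplus\delta$ tension explicitly. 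Both are short; the paper's version avoids the case split on which equalities are attained, while yours makes the role of the $d^C$-fork in Case~\ref{IIb} more visible.
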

\begin{proof}
If $\delta=5$ and $\str C = (5,5,5,5,5)$, then $5\oplus 5 = M-1$, a tension.

Otherwise without loss of generality suppose that $d_0$ is adjacent to $x_i$ and $d_1$ is adjacent to $x_j$ (it can happen that $i=j$). If $d_0-x_i > M$ or $d_1-x_j > M$, we found a tension. Otherwise we get
$$2M+x_i+x_j+d_2 \geq \sum d_i \geq C+\sum_{i=1}^k x_i.$$
If $k\geq 2$, we can choose $i\neq j$ and we get $d_2\geq C-2M = 4M+1-2M > 2M > \delta$, a contradiction. Otherwise $k=1$. But then $\str C = (x_1,d_0,d_2,d_1)$ in this cyclic order. If $d_0=d_2=\delta$ or $d_2=d_1=\delta$, we found a tension. Otherwise we get 
$$2\sum d_i = d_0 + (d_0+d_2) + (d_2+d_1) + d_1 \leq (M+x_1) + (2\delta-1) + (2\delta-1) + (M+x_1),$$
or
$$\sum d_i \leq 2\delta + M + x_1 -1.$$
On the other hand $\sum d_i \geq C+x_1$. And combining these we get $2\delta+M+x_1-1 \geq C+x_1$, or $2\delta+M\geq C+1 = 2\delta+K_2+1 = 2\delta+M+1$, a contradiction.
\end{proof}

In the following we will write $C_x$ for one of the $C_0$ and $C_1$ (to be able to discuss both at once).

\begin{lemma}
If $\str C'$ can be obtained from $\str C$ by a step of the magic completion algorithm, then $\str C'$ is $C_0$- or $C_1$-cycle.
\end{lemma}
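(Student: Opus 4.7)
The plan is to split on the type of fork $p, q$ completed by the step, using that Lemma~\ref{lem:tension:CIIb} guarantees $\str{C}$ has a tension, hence the completed fork satisfies $p \oplus q \neq M$.

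First I will dispose of the special case $\str{C} = (5,5,5,5,5)$ separately. Here the only fork available is $\delta \oplus \delta = M - 1$, and substituting $\delta = 5$, $K_2 = 3$ (so $M = 3$) and $C_1 = C = 4K_2 + 1 = 13$, the resulting quadrilateral is $(5,5,5,2)$; a direct check shows $5+5+5 = 15 > 12 + 2 = (C_1 - 1) + 2$, so $\str{C}'$ is a $C_1$-cycle with $n = 1$.

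In the main case, $\str{C}$ has distances $d_0, d_1, d_2, x_1, \ldots, x_k$ and I split into three sub-cases for the fork. If the fork is $x_i, x_j$, then Lemma~\ref{lem:structure:CIIb} gives $x_i + x_j \leq \sum_\ell x_\ell < K_1 = M$, so $x_i \oplus x_j = x_i + x_j$ and the $C_x$-inequality carries over with the three $d$-edges unchanged. If the fork is $d_i, d_j$, both are at least $K_2 = M$, and using the Case~\ref{IIb} identities $M = K_1 = K_2$ and $C - 1 = 4K_2$ I will verify that $|d_i - d_j| \leq \delta - K_2 < M$, $d_i + d_j \geq 2M$, and $C - 1 - d_i - d_j \geq K_2 - 1 = M - 1$ with $< M$ only when $d_i = d_j = \delta$. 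Thus the only step this fork type can trigger is $\delta \oplus \delta = M - 1$; substituting $d_i = d_j = \delta$ into $d_0 + d_1 + d_2 > C_x - 1 + \sum x_\ell$ gives $d_0' > (C_x - 1 - 2\delta) + \sum x_\ell \geq (M - 1) + \sum x_\ell$ for the surviving $d$-edge $d_0'$, so $\str{C}'$ is non-metric and therefore a $C_x$-cycle with $n = 0$.

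The remaining sub-case is the mixed fork $d_i, x_j$. If $d_i - x_j \leq M$, then $d_i + x_j \geq M + 1$ and $C - 1 - d_i - x_j \geq \delta > M$ (using $d_i + x_j \leq \delta + K_2 - 1$), so $d_i \oplus x_j = M$ and no step occurs. Otherwise $d_i \oplus x_j = d_i - x_j$, and relabelling the new edge $d_i - x_j$ as the $d_i$ of $\str{C}'$ transforms the $C_x$-inequality into its original form with $x_j$ subtracted from both sides, showing that $\str{C}'$ is a $C_x$-cycle with $n = 1$. Parity is preserved throughout: $d^+$ and $d^-$ conserve it automatically, and the $d^C$-fork conserves it because $C - 1 = 2K_1 + 2K_2$ is even in Case~\ref{IIb}. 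The main technical obstacle is the $d_i, d_j$ sub-case, where isolating $d_i = d_j = \delta$ as the unique configuration producing a non-$M$ $\oplus$-value requires tight use of the Case~\ref{IIb} numerics $C = 4K_2 + 1$ and $M = K_2$.
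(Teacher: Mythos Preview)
Your proof is correct and follows essentially the same approach as the paper: split on the fork type, use tension to rule out $\oplus = M$, and verify each surviving option preserves the $C_x$-inequality (with the $d_i,d_j$ case yielding a non-metric cycle). Your treatment is slightly more explicit than the paper's—in the $d_i,d_j$ case you pin down that the only non-$M$ value forces $d_i=d_j=\delta$, whereas the paper works directly with the generic formula $d_i\oplus d_j = C-1-d_i-d_j$—but this is a refinement of the same argument, not a different route.
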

\begin{proof}
We know that $\str C$ has a tension, therefore any step of the magic completion algorithm preserves the parity of the perimeter.

If $\delta=5$ and $\str C = (5,5,5,5,5)$, then $\str C' = (5,5,5,2)$ which indeed is a $C_1$-cycle. Otherwise $\str C$ is a $C_x$-cycle.

We know that $\sum x_i < M$, so $x_i\oplus x_j = x_i+x_j$, which preserves the $C_x$-inequality. We also know that $d_i\geq K_2$ for every $i$, so $d_i\oplus d_j\in \{M, C-1-d_i-d_j\}$. Due to tension only $d_i\oplus d_j = C-1-d_i-d_j$ can happen. Suppose without loss of generality $d_i=d_1$ and $d_j=d_2$. Then
$$d_0 + C-1-(d_1\oplus d_2) = \sum d_i \geq C_x + \sum x_i\geq C+\sum x_i$$
and hence
$$d_0 > (d_1\oplus d_2) + \sum x_i,$$
which means that $\str C'$ is a non-metric cycle (which is a $C_x$-cycle).

Finally $d_i\oplus x_j \in \{M, d_i-x_j\}$, the first does not happen due to tension and the second preserves the $C_x$-inequality.
\end{proof}

\begin{lemma}
If $\str C'$ can be obtained from $\str C$ by an inverse step of the magic completion algorithm which expanded edge $p$ to edges $q, r$, then $\str C'$ is a $C_0$- or $C_1$-cycle or $\delta=5$, $\str C=(5,5,5,2)$ and $\str C'=(5,5,5,5,5)$.
\end{lemma}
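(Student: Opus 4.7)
My plan is to case-split on which edge $p$ is expanded and which type of fork ($d^+$, $d^-$, or $d^C$) yields $q \oplus r = p$. First I would dispose of the degenerate $\str{C} = (5,5,5,5,5)$ case, where I expect no inverse step to exist: a quick check of the three fork conditions should show that no value of $q \oplus r$ equals $\delta$ (for $d^-$ we would need $|q-r| = \delta$, which is impossible; $d^+$ and $d^C$ both produce values strictly below $M < \delta$). For $\str{C}$ a genuine $C_x$-cycle ($x \in \{0,1\}$), I would invoke the tension lemmas (Lemma~\ref{lem:tension:CIIb} for cycles of $\geq 4$ vertices, Lemma~\ref{lem:tension3} for triangles) to guarantee $p \neq M$, and note that in Case~\ref{IIb} we have $K_1 = K_2 = M$ and $C - 1 = 4M$ even, so every non-default fork outcome preserves parity.

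If $p = d_i$, then $d_i > M$ since $d_i \geq K_2 = M$ and $p \neq M$; among the three forks only $d^-$ can yield a value exceeding $M$. So $q \oplus r = |q-r| = d_i$. I would promote the larger of $q, r$ to be the new $d_i$ and absorb the smaller into the $x$-group; the extra summand $r$ then appears on both sides of the $C_x$-inequality and hence it persists, so $\str{C}'$ is still a $C_x$-cycle. If $p = x_i$, then $x_i \leq \sum x_j \leq \delta - M < M$ by Lemma~\ref{lem:structure:CIIb}. The $d^+$ case, $q + r = x_i$, leaves the inequality unchanged once $q, r$ are placed in the $x$-group, so $\str{C}'$ is again a $C_x$-cycle. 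The remaining $d^C$ case is where the exception arises: by Fact~\ref{fact:oplus}(5) it forces $q = r = \delta$ and $x_i = M - 1$.

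I expect the main obstacle to be showing that this final sub-case realises only the stated exception. Substituting $x_i = M - 1$ into the bound $\sum x_j \leq \delta - M$ of Lemma~\ref{lem:structure:CIIb} gives $2M \leq \delta + 1$; combining this with the Case~\ref{IIb} identity $3M = 2\delta - 1$ pins down $M \leq 3$, and admissibility ($\delta \geq 3$, $\delta$ integer) leaves only $M = 3$, $\delta = 5$. From $\sum x_j \geq x_i = 2$ and $\sum x_j \leq \delta - M = 2$, I get $k = 1$ and $x_1 = 2$, and then $d_0 + d_1 + d_2 \geq C_x + 2$ combined with each $d_j \leq 5$ and $C_x \geq C = 13$ forces $C_x = C_1 = 13$ and $d_0 = d_1 = d_2 = 5$. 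Hence $\str{C} = (5,5,5,2)$ expands to $\str{C}' = (5,5,5,5,5)$, precisely the exception in the statement.
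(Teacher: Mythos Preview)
Your proposal is correct and follows essentially the same route as the paper: case-split on whether the expanded edge is a $d_i$ or an $x_i$, use the structural bounds of Lemma~\ref{lem:structure:CIIb} to determine which fork applies, and in the exceptional $d^C$ sub-case derive the numerical constraints forcing $\delta=5$ and $\str C=(5,5,5,2)$. The only cosmetic differences are that you handle $\str C=(5,5,5,5,5)$ explicitly up front (the paper leaves it implicit, since every edge exceeds $M$ and no $|q-r|$ can equal $\delta$), and your arithmetic to pin down $\delta=5$ proceeds via $2M\le\delta+1$ rather than the paper's $3\delta\ge 5K_1$; both computations are straightforward consequences of the same structural lemma.
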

\begin{proof}
If $p=d_i$, then necessarily $q\oplus r = q-r$, so $\str C'$ is indeed a $C_x$-cycle. If $p=x_i$, then either $q\oplus r = q+r$ (and then again $\str C'$ is a $C_x$-cycle), or $q\oplus r = C-1-q-r$. But then $x_i = K_1-1$ and $q=r=\delta$. It holds that 
$$3\delta \geq \sum d_i \geq C_x+\sum x_i\geq C+K_1-1 = 5K_1 = 5\frac{2\delta-1}{3}$$
with equality only if $C_x = C$, i.e. when $\str C$ is a $C_1$-cycle.

So we have
$$3\delta \geq 5\frac{2\delta-1}{3},$$
or
$$9\delta \geq 10\delta-5,$$
which means $\delta\leq 5$. As in Case~\ref{IIb} it holds that $\delta\geq 5$, we have $\delta = 5$. But then we still need $3\delta \geq C+K_1-1$, or $15\geq 13+3-1$ which is an equality, therefore there had to be equalities in all the estimates, which means that $\str C=(5,5,5,2)$ and thus $\str C'=(5,5,5,5,5)$.
\end{proof}

\section{$C$-cycles with $n\geq 1$ when $C'=C+1$}
In the whole section we let $\str{C}$ be a $C$-cycle with distances $d_0, d_1, \ldots, d_{2n}, x_1, \ldots, x_k$, where $n\geq 1$. We will further use the fact that $C\geq 2\delta+2$ (which is an acceptability condition).

\begin{lemma}\label{lem:structure:C}
It holds that $\sum x_i < d_j$ for every $j$.
\end{lemma}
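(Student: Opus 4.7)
My plan is to prove the inequality by bounding the sum of all but the smallest $d_j$ trivially via $\delta$ and then using the acceptability bound $C\geq 2\delta+2$ together with $n\geq 1$ to extract a strict inequality. It suffices to prove the statement for the smallest $d_j$, since if the minimum of the $d_i$'s exceeds $\sum x_i$, then so do all of them.

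So the first step is to fix $j$ with $d_j = \min_{0\leq i\leq 2n} d_i$. Then from the $C$-cycle inequality
\[
\sum_{i=0}^{2n} d_i > n(C-1) + \sum_{i=1}^{k} x_i
\]
I isolate $d_j$ on the left and use that each of the remaining $2n$ values $d_i$ is at most $\delta$, so $\sum_{i\neq j} d_i \leq 2n\delta$. Rearranging gives
\[
d_j > n(C-1) - 2n\delta + \sum_{i=1}^{k} x_i = n(C-1-2\delta) + \sum_{i=1}^{k} x_i.
\]

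The second step is to invoke acceptability. Since $C\geq 2\delta+2$ we have $C-1-2\delta \geq 1$, and since $n\geq 1$ the quantity $n(C-1-2\delta)$ is at least $1$, so $d_j > \sum_{i=1}^{k} x_i$. By the choice of $j$ as minimiser, the same inequality holds for every $d_j$, which is what we wanted.

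I do not expect any obstacle here: the argument is a one-line estimate once the acceptability hypothesis $C\geq 2\delta+2$ is brought to bear on the $C$-cycle inequality, and the assumption $n\geq 1$ is precisely what prevents the trivial term $n(C-1-2\delta)$ from collapsing to zero. The lemma is really saying that in a genuine $C$-cycle (as opposed to a non-metric cycle, which is the $n=0$ case), the $d_i$ edges must each be strictly longer than the total length of all the $x_i$ edges combined, a fact which will presumably be used in the subsequent section to control which forks can possibly arise during steps and inverse steps of the magic completion algorithm.
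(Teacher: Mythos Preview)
Your proof is correct and is essentially identical to the paper's argument: both use the bound $\sum_{i\neq j} d_i \leq 2n\delta$ together with $C-1 > 2\delta$ (equivalently $C\geq 2\delta+2$) to deduce $d_j > \sum x_i$ from the $C$-cycle inequality. The only difference is that your reduction to the minimal $d_j$ is unnecessary, since the estimate $\sum_{i\neq j} d_i \leq 2n\delta$ already holds for every $j$; the paper simply applies the argument to an arbitrary $j$ directly.
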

\begin{proof}
We have $d_j + 2n\delta \geq \sum d_i > n(C-1)+\sum x_i$, and $n(C-1) > 2n\delta$, so $\sum x_i < d_j$.
\end{proof}

\begin{lemma}\label{lem:tension:C}
If $\str C$ has at least 4 vertices, then it has a tension.
\end{lemma}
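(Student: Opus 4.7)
The plan is to assume for contradiction that $\str C$ has no tension---so every pair of adjacent edges $a,b$ in $\str C$ satisfies $a\oplus b=M$, equivalently $|a-b|\leq M$ and $M\leq a+b\leq C-1-M$---and derive a contradiction via a careful averaging argument. First I would dispose of the case $k=0$: summing $d_i+d_{i+1}\leq C-1-M$ around the cycle gives $2\sum d\leq (2n+1)(C-1-M)$, which together with the $C$-inequality $\sum d>n(C-1)$ yields $(2n+1)M<C-1$. Combining this with the trivial bound $\sum d\leq (2n+1)\delta$ produces $M<\delta/n$; for $n\geq 2$ this contradicts $M\geq\lceil\delta/2\rceil$, and the case $n=1$ is excluded since then $\str C$ would be a triangle.

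For $k\geq 1$ I would partition the $d$-edges into $S$ maximal consecutive \emph{$d$-segments} in cyclic order, with $S$ alternating $x$-segments in between. For a $d$-segment of size $s_t$ with $x$-neighbours $y^L_t,y^R_t$ at its endpoints, combining the internal $(d,d)$-constraints $d^i+d^{i+1}\leq C-1-M$ with the endpoint $(d,x)$-constraints $d^1\leq M+y^L_t$ and $d^{s_t}\leq M+y^R_t$ gives
\begin{equation*}
 2D_t\leq (s_t-1)(C-1-M)+2M+y^L_t+y^R_t,
\end{equation*}
where $D_t$ is the sum of $d$-lengths in segment $t$ (the inequality is valid also for $s_t=1$, where it just restates $d^1\leq M+\min(y^L_t,y^R_t)$). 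Summing over $t$, using $\sum_t s_t=2n+1$ and the fact that each $x$-edge is a left neighbour $y^L$ of at most one segment and a right neighbour $y^R$ of at most one (so $\sum_t (y^L_t+y^R_t)\leq 2\sum x$), and then combining with the $C$-inequality $D>n(C-1)+\sum x$, the $\sum x$ terms cancel and a short rearrangement yields the single inequality
\begin{equation*}
 (S-1)(C-1)<(3S-2n-1)M.
\end{equation*}

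The final step, which I expect to be the main obstacle, is to rule out this strict inequality for every $1\leq S\leq 2n+1$ with $n\geq 1$. When $3S-2n-1\leq 0$ the right-hand side is non-positive while the left is non-negative, so the inequality fails at once. Otherwise I would substitute the magic-distance bound $M\leq\lfloor(C-\delta-1)/2\rfloor$, reducing the task to a polynomial inequality in $S$, $n$, $\delta$, $C$. The continuous version $M\leq(C-\delta-1)/2$ suffices in all but a few borderline situations at the extreme $C=3\delta+2$---namely $n=1$ with $S\in\{2,3\}$---where the integer bound $M\leq\delta$ instead provides the needed margin (for instance, with $S=2$, $n=1$, $C=3\delta+2$ one has $(S-1)(C-1)=3\delta+1>3\delta\geq 3M$). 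In every case the required strict inequality fails, contradicting the no-tension assumption.
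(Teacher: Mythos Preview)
Your argument is correct and follows essentially the same averaging strategy as the paper: assume no tension, sum the resulting local constraints ($d+d'\leq C-1-M$ at $d$--$d$ vertices and $d\leq M+x$ at $d$--$x$ vertices), and play this off against the $C$-inequality. The difference is purely organisational. You parametrise by the number $S$ of maximal $d$-segments; the paper parametrises by the number $\alpha$ of vertices at which two $d$-edges meet, and these are related by $\alpha = (2n+1)-S$. Your derived inequality $(S-1)(C-1) < (3S-2n-1)M$ is exactly the paper's $\alpha(C-1-3M) \geq 2 + 2n(C-1) - 2(2n+1)M$ after this substitution (up to the ``$+2$'' coming from treating the strict $C$-inequality as $\geq 1+n(C-1)+\sum x$ before doubling). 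The paper's endgame is a bit slicker: rather than case-split on $S$ and chase the borderline $C=3\delta+2$, it simply observes $C-1-3M>0$ and uses $-2(2n+1)M\geq -6nM$ (valid for $n\geq 1$) to get $\alpha(C-1-3M)>2n(C-1-3M)$, hence $\alpha=2n+1$ and $k=0$, reducing to the case you already handled. Your route reaches the same conclusion with a little more bookkeeping; both are fine. (Incidentally, your borderline $C=3\delta+2$ is actually vacuous, since one of $C_0,C_1$ always has the wrong parity to attain $3\delta+2$, forcing $C\leq 3\delta+1$.)
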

\begin{proof}
First suppose that $k=0$. Then $n\geq 2$ ($\str C$ has at least four vertices) and without loss of generality $\str C = (d_0, d_1, \ldots, d_{2n})$ in this cyclical order. We identify $d_0=d_{2n+1}$. If for some $0\leq i\leq 2n$ it holds that $C-1-d_i-d_{i+1} < M$, we found a tension. Suppose for a contradiction that $d_i+d_{i+1}\leq C-1-M$ for every $0\leq i\leq 2n$. If we sum these inequalities for all $0\leq i\leq 2n$, we get
$$(2n+1)(C-1-M) \geq 2\sum d_i > 2n(C-1),$$
or
$$C-1 > (2n+1)M.$$
Thus we can again use the $C$-inequality and get
$$(2n+1)\delta\geq \sum d_i > n(C-1) > n(2n+1)M \geq n(2n+1)\frac{\delta}{2},$$
which is clearly absurd as $n\geq 2$.

Now we generalise the previous argument for cases where $k > 0$. If some $d_i, d_j$ are adjacent, then we can assume that $d_i+d_j\leq C-1-M$, as otherwise we have found a tension. If $d_i$ and $x_{j_i}$ are adjacent, then $d_i\leq M+x_{j_i}$, otherwise $d_i\oplus x_{j_i} = d_i-x_{j_i}<M$ and we again have a tension. Suppose that there are $\alpha$ vertices of $\str C$ where some $d_i$ and $d_j$ are adjacent. Then there are $2(2n+1-\alpha)$ vertices, where some $d_i$ and $x_{j_i}$ are adjacent (as there are in total $2(2n+1)$ endpoints of the $d_i$ edges).

If we sum the $d_i+d_j\leq C-1-M$ and $d_i\leq M+x_{j_i}$ inequalities over all endpoints of the $d_i$ edges, we get
$$2\sum d_i \leq \alpha(C-1-M) + 2(2n+1-\alpha)M + \sum x_{j_i}.$$
Clearly $\sum x_{j_i}\leq 2\sum x_i$. We also have a lower bound on $\sum d_i$ from the $C$-inequality, hence
$$\alpha(C-1-M) + 2(2n+1-\alpha)M + 2\sum x_i \geq 2+2n(C-1) + 2\sum x_i,$$
or
$$\alpha(C-1-3M) \geq 2+2n(C-1) - 2(2n+1)M.$$

From the conditions on $M$ we have $M \leq \lfloor \frac{C-1-\delta}{2} \rfloor$. Thus $3M = 2M+M \leq C-1-\delta+M$. If $M=\delta$, then $C\geq 3\delta+1$, but one can easily check that then no cycle is forbidden by the $\str C$-inequality. Hence $M<\delta$ and thus $C-1-3M > 0$.

Now as $n\geq 1$ and $M>0$, we also have $-2(2n+1)M = (-4n-2)M \geq -6nM = 2n(-3M)$. Hence
$$\alpha(C-1-3M) > 2n(C-1-3M),$$
and thus $\alpha > 2n$. But we also know that clearly $\alpha\leq 2n+1$, so $\alpha=2n+1$. But this means that there are no vertices where $d_i$ and $x_j$ meet, hence $k=0$ and the problem was reduced to the previous case.
\end{proof}

\begin{lemma}
If $\str C'$ can be obtained from $\str C$ by a step of the magic completion algorithm, then $\str C'$ is a $C$-cycle.
\end{lemma}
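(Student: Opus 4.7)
The plan is to use Lemma~\ref{lem:tension:C} to rule out $a \oplus b = M$ whenever a step is applied to the fork $a, b$, and then to split into three cases based on how many of $a, b$ are labelled as $d$-edges of $\str C$. In each case I will list the possible values of $e = a \oplus b$, choose how to relabel $e$ as a new $d$-edge or $x$-edge of $\str C'$ so that the number of $d$-edges remains odd, and verify that the strict $C$-inequality still holds.

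For the case where both fork edges are $x$-edges I would relabel $e$ as a new $x$-edge, keeping $n' = n$. The sub-cases $e = a+b$ and $e = |a-b|$ are immediate because the $x$-perimeter does not grow. For the $d^C$-sub-case $e = C-1-a-b$, the hypothesis $e < M$ combined with $M \leq \lfloor (C-\delta-1)/2 \rfloor$ forces $a+b > (C-1+\delta)/2 > (C-1)/2$, so the resulting growth $C-1-2(a+b)$ of the $x$-perimeter is non-positive and the inequality is preserved. For the mixed fork (one $d$-edge $d_i$, one $x$-edge $x_\ell$), relabel $e$ as a new $d$-edge and again keep $n' = n$; the $d^\pm$-sub-cases balance exactly (gain on the $d$-side matches the loss on the $x$-side), and the $d^C$-sub-case reduces to the estimate $d_i \leq (C-1)/2$, which follows from $d_i \leq \delta$ and the acceptability bound $C \geq 2\delta + 2$.

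The delicate case is the fork between two $d$-edges $d_i, d_j$: here the count of $d$-edges drops by two, so the new edge must be relabelled as an $x$-edge and $n$ must be decremented to $n' = n - 1 \geq 0$, which is legitimate because we are assuming $n \geq 1$. The $C$-inequality for $\str C'$ then reduces to the single condition $d_i + d_j + e \leq C - 1$, which I would verify subcase by subcase: for $e = d_i + d_j$ from $d_i + d_j < M \leq (C-1)/2$; for $e = |d_i - d_j|$ from $d_i \leq \delta < (C-1)/2$ (taking $d_i \geq d_j$); and for $e = C-1-d_i-d_j$ as an equality $d_i + d_j + e = C - 1$ that is absorbed by the strictness of the original $C$-cycle inequality. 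I expect the main obstacle to be bookkeeping around this last subcase and the parity of the $d$-count, but the two estimates $\delta < (C-1)/2$ and $M \leq (C-1)/2$ that follow directly from acceptability and the definition of $M$ are sufficient; no new idea is needed beyond the routine application of the $\oplus$ operation's three branches.
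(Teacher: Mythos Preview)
Your proof is correct, and in one place it is actually cleaner than the paper's own argument.

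The overall case split and the relabelling scheme (keep $n'=n$ and declare $e$ an $x$-edge for an $x,x$ fork; keep $n'=n$ and declare $e$ a $d$-edge for a mixed fork; drop to $n'=n-1$ and declare $e$ an $x$-edge for a $d,d$ fork) agrees with what the paper does. Where you diverge is in how the $d^C$-branch is handled. For the $d_i,d_j$ fork the paper rules out the $d^+$-branch by a counting contradiction, while you simply absorb it into the estimate $d_i+d_j+e\le C-1$; both work. More noticeably, for the mixed $d_i,x_j$ fork the paper spends a paragraph on a time-based argument (looking at the other neighbour of $d_i$ and comparing $t$-values) to show that the $d^C$-branch never actually occurs in a genuine step. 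You bypass this entirely: by declaring $e$ a $d$-edge, preservation of the $C$-inequality reduces to $e\ge d_i-x_j$, i.e.\ $2d_i\le C-1$, which is immediate from $d_i\le\delta$ and $C\ge 2\delta+2$. This is a genuinely simpler argument that does not depend on which fork the algorithm would in fact select.

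One small remark: in your $x_i,x_j$ case the $d^C$-subcase is in fact vacuous, since $x_i+x_j<\sum x_\ell<d_0\le\delta$ by Lemma~\ref{lem:structure:C}, so $C-1-x_i-x_j>C-1-\delta\ge\delta\ge M$ and that branch of $\oplus$ is never taken. Your treatment of it is still valid, just unnecessary; the paper silently omits it for this reason.
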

\begin{proof}
We know that $\str C$ has a tension. If the step completed a fork $x_i, x_j$, then $x_i\oplus x_j \in \{x_i+x_j, |x_i-x_j|\}$, in both cases $\str C'$ is a $C$-cycle.

If the step completed a fork $d_i, d_j$ (for convenience we can without loss of generality assume that it was $d_{2n-1}, d_{2n}$), then $d_{2n-1}\oplus d_{2n} \in \{d_{2n-1}+ d_{2n}, |d_{2n-1}- d_{2n}|, C-1-d_{2n-1}- d_{2n}\}$. Clearly $d_{2n-1}\oplus d_{2n} = d_{2n-1}+ d_{2n} < M$ is absurd, as then $(2n-1)\delta + M > \sum d_i \geq 1+n(C-1)\geq 1+2n\delta + n$, a contradiction.

If $d_{2n-1}\oplus d_{2n} = |d_{2n-1} - d_{2n}|$, say $d_{2n-1}-d_{2n}$, then
$$\sum_{i=0}^{2n-2} d_i \geq 1+(n-1)(C-1) + \sum x_i + C-1-d_{2n-1}-d_{2n}.$$
But
$$C-1-d_{2n-1}-d_{2n} \geq d_{2n-1} - d_{2n},$$
as
$$C-1\geq 2\delta \geq 2d_{2n-1}.$$
So
$$\sum_{i=0}^{2n-2} d_i \geq 1+(n-1)(C-1) + \sum x_i + (d_{2n-1}\oplus d_{2n})$$
and thus $\str C'$ is a $C$-cycle.

Otherwise $d_{2n-1}\oplus d_{2n} = C-1-d_{2n-1}- d_{2n}$, but then 
$$\sum_{i=0}^{2n-2} d_i \geq 1+(n-1)(C-1) + \sum x_i + C-1-d_{2n-1}-d_{2n}$$
$$ = 1+(n-1)(C-1) + \sum x_i + (d_{2n-1}\oplus d_{2n}).$$

Finally $d_i\oplus x_j \in \{d_i+x_j, d_i-x_j, C-1-d_i-x_j\}$ (as $d_i\geq x_j$ we have $|d_i-x_j| = d_i-x_j$). If $d_i\oplus x_j \in \{d_i+x_j, d_i-x_j\}$, then $\str C'$ indeed is a $C$-cycle. Suppose now that $d_i\oplus x_j = C-1-d_i-x_j$. If the second neighbour of $d_i$ is some $d_l$, then either $d_l > x_j$ and then $t(C-1-d_i-d_l) < t(C-1-d_i-x_j)$ which is a contradiction, or $d_l=x_j$, we can simply swap the roles of $d_l$ and $x_j$ and thus have reduced it to the previous paragraph. Otherwise the second neighbour of $d_i$ is some $x_l$.

We know that $x_j+x_l\leq \sum x_i < \delta$. And then clearly $\delta+x_j+x_l < C-1$. But this means that $\delta-d_i+x_l < C-1-d_i-x_j$. This implies that $d_i-x_l>M$ (because, by the assumption $C-1-d_i-x_j < M$ and $M\geq \lceil\frac\delta 2\rceil$) and also that $2(\delta-d_i+x_l) < 2(C-1-d_i-x_j) + 1$. Together, this means $t(d_i-x_l) < t(C-1-d_i-x_j)$, which is a contradiction.
\end{proof}

\begin{lemma}
If $\str C'$ can be obtained from $\str C$ by an inverse step of the magic completion algorithm which expanded edge $p$ to edges $q, r$, then $\str C'$ is a $C$-cycle.
\end{lemma}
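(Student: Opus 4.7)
The proof plan is to case-split on whether $p = d_i$ or $p = x_j$ and on which of the three possible forks realises $p = q\oplus r$: the $d^+$-fork ($p = q+r$), the $d^-$-fork ($p = |q-r|$), or the $d^C$-fork ($p = C-1-q-r$, available only in Case~\ref{IIa} by Fact~\ref{fact:oplus}(4)). Three of the six resulting sub-cases admit an immediate re-labelling of $\str C'$ as a $C$-cycle, while the remaining three are shown not to arise by contradicting the minimality of $t(p)$ among adjacent pairs of $\str C'$. We may assume $p \ne M$ by Lemma~\ref{lem:tension:C} (for $\ge 4$ vertex $\str C$) or Lemma~\ref{lem:tension3} (for the triangle case).

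In the favourable sub-cases I proceed as follows. If $p = d_i$ and $q\oplus r = |q-r|$ (say $q = d_i + r$), I take $q$ in place of $d_i$ in the $d$-set and $r$ in the $x$-set; the substitution $\sum_{j\ne i} d_j + q = \sum_j d_j + r$ converts the new $C$-inequality into the old one. If $p = x_j$ and $q\oplus r = q+r$, I put both $q, r$ into the $x$-set and the sum of $x$-edges is unchanged. If $p = x_j$ and $q\oplus r = C-1-q-r$, I promote both $q, r$ to $d$-edges and increment the parameter to $n+1$; the identity $x_j + q + r = C-1$ again makes the new $C$-inequality equivalent to the old one.

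The three remaining sub-cases are $p = d_i$ with $q\oplus r \in \{q+r,\, C-1-q-r\}$ (forcing $d_i < M$) and $p = x_j$ with $q\oplus r = |q-r|$ (forcing $x_j > M$). In each I plan to derive a contradiction from timing. For $\str C$ with at least four vertices I invoke Lemma~\ref{lem:tension:C} to obtain a tension pair; when this pair is disjoint from $p$ it persists unchanged in $\str C'$ and its $\oplus$-time is strictly smaller than $t(p)$, contradicting the fact that $(q,r)$ is the earliest-time adjacent pair. For the three-vertex triangle case $\str C = (d_0, d_1, d_2)$ with $d_0 + d_1 + d_2 \ge C$, the tension lemma does not apply; here I argue directly. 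For example, when $p = d_0 = q+r < M$, I first rule out $|d_1-d_2| > M$ (using $d_1+d_2 > C - M$ together with $C \ge 2\delta+2$ to show this forces $d_1 > \delta$), so that $d_1 \oplus d_2 = C-1-d_1-d_2 < M$; then the perimeter inequality $d_0 + d_1 + d_2 \ge C$ rearranges to $d_0 > C-1-d_1-d_2$, whence $t(d_0) > t(d_1\oplus d_2)$, contradicting timing. The other two problematic sub-cases for the triangle, and the analogous eliminations for $\ge 4$ vertex $\str C$, should follow by essentially the same calculation, using Lemma~\ref{lem:structure:C} ($\sum x_i < d_j$ for every $j$) to control the relevant sums.

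The main obstacle I anticipate is, in the $\ge 4$ vertex problematic cases, ensuring that the tension pair provided by Lemma~\ref{lem:tension:C} can be chosen disjoint from the expanded edge $p$. I expect to resolve this by revisiting the proof of Lemma~\ref{lem:tension:C}, whose summation argument over all neighbouring pairs actually forces tension in several places simultaneously; should the only available tension pair contain $p$, I would instead compare $t(p)$ with the $\oplus$-time of one of the new pairs $(a,q)$ or $(r,b)$ appearing in $\str C'$ (where $a,b$ are the original neighbours of $p$ in $\str C$) and derive the contradiction there, using the structural bounds from Lemma~\ref{lem:structure:C} to estimate the relevant $\oplus$-values.
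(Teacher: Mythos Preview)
Your identification and handling of the three ``easy'' sub-cases ($p=d_i$ with the $d^-$-fork, $p=x_j$ with the $d^+$-fork, $p=x_j$ with the $d^C$-fork) is correct and matches the paper exactly. Your triangle sketch for $p=d_0=q+r<M$ is also essentially right.

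The genuine gap is in the $\ge 4$-vertex problematic sub-cases. You assert that a tension pair in $\str C$ disjoint from $p$ will have $\oplus$-time strictly smaller than $t(p)$, but Lemma~\ref{lem:tension:C} only gives $a\oplus b\neq M$; it says nothing about $t(a\oplus b)<t(p)$. That implication is false in general and is precisely the hard content of this lemma. The paper's argument here is substantially more involved and does \emph{not} go through the tension lemma. Instead, for each problematic sub-case it extracts quantitative bounds on \emph{every} adjacent pair of $\str C$ directly from the $C$-inequality, and then sums these bounds to reach a contradiction. For $p=x_i>M$: one first shows that any adjacent $d_j,d_l$ satisfy $d_j+d_l\geq C-\delta+x_i$, which forces $t(C-1-d_j-d_l)<t(x_i)$ and hence no two $d$-edges are adjacent; then every $d_j$ has two $x$-neighbours, each giving $d_j-x_{l_j}\leq x_i$ by timing, and summing these $4n+2$ inequalities against the $C$-inequality yields $(4n-2)x_i>(4n-2)\delta$, absurd. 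For $p=d_0<M$: a similar summation over all endpoints of $d$-edges, combined with the bound $2M\leq C-1-\delta$, forces $2>2k-\ell$ (where $\ell$ counts $x$-edges adjacent to $d_0$), reducing to $(k,\ell)\in\{(0,0),(1,1)\}$; these residual configurations are then eliminated by observing $d_0>\sum_{i=1}^n(C-1-d_{2i-1}-d_{2i})\geq nd_0$.

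Your final paragraph correctly anticipates that the obstacle is real, but the resolution you sketch (``revisit the tension lemma'', ``compare with $(a,q)$ or $(r,b)$'') does not supply the missing idea: the timing contradiction has to be manufactured from the $C$-inequality via a global summation, not read off from a single tension pair.
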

\begin{proof}
If $p=x_i$ and $q\oplus r = q+r$, then $\str C'$ is trivially a $C$-cycle. If $p=x_i$ and $q\oplus r = C-1-q-r$, then
$$q+r+\sum d_i \geq 1+(n+1)(C-1)+\sum_{j\neq i} x_j,$$
so $\str C'$ is again a $C$-cycle. Now suppose that $p=x_i$ and $q\oplus r = q-r$. This means that $x_i > M$. We get for every $j\neq l$ that
$$(2n-1)\delta + d_j+d_l\geq \sum d_i \geq 1+n(C-1) + x_i \geq 1 + 2(n-1)\delta + C-1 + x_i,$$
that is
$$d_j+d_l\geq C-\delta+x_i.$$
In particular it holds for every neighbouring $d_j, d_l$. Then $C-1-d_j-d_l \leq \delta-1-x_i < M$, hence $t(C-1-d_j-d_l) \leq 2\delta-2x_i-1$. But $t(x_i) = 2\delta-2x_i$, hence $t(C-1-d_j-d_l)<t(x_i)$, a contradiction. This means that there can be no neighbouring $d_j, d_l$.

Similarly, for every neighbouring $d_j, x_{l_j}$ we get $d_j-x_{l_j} \leq x_i$ (otherwise we again get a contradiction with time). If some $d_j$ is neighbouring with $q$, let it without loss of generality be $d_0$ and if some $d_j$ is adjacent to $r$, let it without loss of generality be $d_1$. Now we can sum all the valid inequalities $d_j\leq x_i+x_{l_j}$ and get
$$2\sum d_j \leq 4nx_i + 2\sum_{j\neq i} x_j + d_0 + d_1 = (4n-2)x_i + 2\sum x_j + d_0+d_1,$$
because there are $2(2n+1)$ endpoints of the $d_j$ edges and at most two of them are not counted.
Thus
$$(4n-2)x_i+2\delta + 2\sum x_j \geq 2\sum d_j \geq 2+2n(C-1)+2\sum x_j,$$
hence
$$(4n-2)x_i+2\delta \geq 2+2n(C-1)> 2+4n\delta,$$
but this means
$$(4n-2)x_i > (4n-2)\delta,$$
which is clearly absurd.

\medskip

The other possibility is without loss of generality $p=d_0$. If $q\oplus r = q-r$, then $\str C'$ is again a $C$-cycle. Now suppose that $q\oplus r \in \{C-1-q-r, q+r\}$. We know that $M>d_0 > \sum x_i$. If there are two $x_i$'s adjacent, their $\oplus$-sum is smaller than $d_0$, a contradiction with time. Hence no two $x_i$'s are adjacent.

By rearranging the $C$-inequality, we get for every $j\neq 0$ and every $1\leq l\leq k$ the following inequality
$$d_j-x_l \geq d_j-\sum x_i \geq 1+n(C-1) - \sum_{m\notin \{0, j\}} d_m - d_0,$$
but $n(C-1) \geq 2n\delta + n$ and $\sum_{m\notin \{0, j\}} d_m \leq (2n-1)\delta$, hence
$$d_j-x_l\geq 1+\delta+n-d_0,$$
so $2(\delta - (d_j-x_l)) \leq 2(d_0-1-n) < 2d_0+1$. If $d_j-x_l>M$ and $d_j$ and $x_l$ are adjacent, we would get $t(d_j-x_l) < t(d_0)$, a contradiction with time. Hence for every $j\neq 0$ we get that if $d_j$ and $x_l$ are adjacent, it holds that $d_j-x_l\leq M$, or in other words $x_l\geq d_j-M$.

There are at most two vertices in which some $x_i$ is adjacent to $d_0$. For every vertex where some $x_i$ is adjacent to $d_{j_i}$ with $j_i\neq 0$ we have $x_i\geq d_{j_i}-M$. We know that $d_0$ is adjacent to $\ell\in\{0,1,2\}$ edges labelled by some $x_i$. Thus we can bound
$$2\sum_{i=1}^k x_i \geq \sum_{j_i\neq 0} d_{j_i} - (2k-\ell)M,$$
where $\sum_{j_i\neq 0} d_{j_i}$ goes over all the neighbours of all $x_i$'s, that is, every $d_i$ occurs in the sum at most two times and there are in total $2k-\ell$ summands.
By rearranging the $C$-inequality and multiplying it by 2 we know that
$$2d_0 + 2\sum_{i=1}^{2n}d_i > 2n(C-1)+2\sum x_i.$$
Combining the last two inequalities with the trivial bound $d_i\leq \delta$ and the assumption $d_0<M$, we get
$$2M+(4n-2k+\ell)\delta+(2k-\ell)M > 2n(C-1).$$
If we multiply this inequality by $2$ and bound $\frac{C-1-\delta}{2}\geq M$, we get
$$(8n-4k+2\ell)\delta +(2k-\ell+2)(C-1) - (2k-\ell+2)\delta > 4n(C-1),$$
or
$$(8n-6k+3\ell-2)\delta > (4n-2k+\ell-2)(C-1).$$
Now we can bound $C-1>2\delta$ and divide the inequality by $\delta$ to get
$$8n-6k+3\ell-2 > 8n-4k+2\ell-4,$$
or
$$2 > 2k-\ell.$$
We know that $k\geq 0$, $\ell\in\{0,1,2\}$ and $k\geq \ell$. It then follows that $(k,\ell)\in \{(0,0),(1,1)\}$.

This means that without loss of generality either $\str C = (d_0, d_1, \ldots, d_{2n})$, or $\str C = (x_1, d_0, d_1, \ldots, d_{2n})$. In both cases it must hold that $C-1-d_i-d_{i+1} \geq d_0$ for every $1\leq i\leq 2n-1$, as otherwise it would contradict time. However, by the $C$-inequality, $\sum d_i > n(C-1)$, so $d_0 > \sum_{i=1}^n \left(C-1-d_{2i-1}-d_{2i}\right) \geq nd_0$, which is a contradiction. Thus $q\oplus r \notin \{C-1-q-r, q+r\}$ and we are done.
\end{proof}

\section{($1,\delta$)-graphs}
\label{sec:1delta}
We conclude with a short note about ($1,\delta$)-graphs associated to the
metrically homogeneous graphs of diameter $\delta$. Our discussion is based
on the following easy observation.
\begin{observation}
\label{obs:cycles}
In every metric space associated to a metrically homogeneous graph of finite diameter $\delta$, every pair of vertices in distance $1\leq d\leq \delta$ is
connected by 
\begin{enumerate}
\item \label{geodesic} a path consisting of $d$ edges of length 1 (a geodesic path); and
\item \label{antigeodesic} a path consisting of one edge of length $\delta$ and $\delta-d$ edges of length 1 (an anti-geodesic path).
\end{enumerate}
\end{observation}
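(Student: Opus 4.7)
The plan is to reduce both parts of the observation to routine consequences of the homogeneity of $\Gamma$ (i.e.\ the extension property of its Fra\"iss\'e limit) together with the very definition of the graph metric.

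Part~\ref{geodesic} is essentially a restatement of that definition: if $d(u,v)=d$, then by the choice of the associated metric space there is a shortest path $u = w_0, w_1, \ldots, w_d = v$ of length $d$ in $\Gamma$, and each consecutive pair $w_i, w_{i+1}$ is adjacent in $\Gamma$ and therefore at metric distance~$1$.

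For part~\ref{antigeodesic} the plan is first to find a vertex $w \in \Gamma$ with $d(u,w) = \delta$ and $d(v,w) = \delta - d$; then applying part~\ref{geodesic} to the pair $(w, v)$ produces a path of $\delta - d$ unit edges from $w$ to $v$, and prepending the edge $uw$ of length $\delta$ gives the desired anti-geodesic (in the degenerate case $d = \delta$ one takes $w = v$ and the anti-geodesic is the single edge $uv$). The existence of such a $w$ follows from homogeneity as soon as one checks that the triangle on $\{u,v,w\}$ with distance pattern $(d, \delta, \delta - d)$ belongs to the age of $\Gamma$, i.e.\ to $\mathcal A^\delta_{K_1,K_2,C_0,C_1}$ in the primitive $3$-constrained setting. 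This is the only substantive step, and it is immediate: the triangle inequality holds with equality $\delta = d + (\delta - d)$, so the configuration is a (degenerate) metric triangle; its perimeter $2\delta$ is even and satisfies $2\delta < C_0$ by the acceptability condition $C_0 \geq 2\delta + 2$; and the constraints involving $K_1$, $K_2$, $C_1$, which are sensitive only to odd perimeters, are vacuous here.

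Thus there is no real obstacle—the observation is a direct consequence of acceptability together with homogeneity, and its significance is not the proof itself but what it enables, namely interpreting every distance $d$ in $\Gamma$ through paths using only edges of lengths $1$ and $\delta$, which is the starting point for the $(1,\delta)$-homogenization discussion that follows.
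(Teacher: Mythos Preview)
Your proof is correct for the primitive $3$-constrained case, but it takes a slightly different route from the paper for part~\ref{antigeodesic}. You verify that the triangle with side lengths $(d,\delta,\delta-d)$ lies in the age by checking the numerical constraints of Definition~\ref{defn:numerical} directly (even perimeter $2\delta<C_0$, the $K$-bounds vacuous). The paper instead \emph{realises} this triangle inside $\Gamma$ without touching any constraints: it picks a vertex $u'$ at distance $\delta$ from $u$, walks along a geodesic of length $\delta$ from $u$ to $u'$, and observes that the vertex $u''$ at step $d$ along this geodesic forms exactly the triangle $(d,\delta,\delta-d)$ with $u$ and $u'$; homogeneity then transports it to $(u,v)$.

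The practical difference is scope. The observation is stated for \emph{every} metrically homogeneous graph of finite diameter, not only the primitive $3$-constrained ones, and the paper's argument covers this full generality since it never invokes the parameters $K_1,K_2,C_0,C_1$. Your argument, as written, is tied to the $3$-constrained setting. Both are fine for the purposes of Section~\ref{sec:1delta}, but the paper's construction is the cleaner one to remember: it shows the triangle is in the age by exhibiting it, rather than by checking axioms.
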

\begin{proof}
Let $u,v$ be a pair of vertices in distance $d$.
Part~(\ref{geodesic}) follows from the definition of the associated metric space. 

To see (\ref{antigeodesic}), consider a vertex $u'$ in distance $\delta$
from $u$ (such a vertex exists because the metric space is homogeneous and has
diameter $\delta$).
Consequently, there is a path consisting of $\delta$ edges of length 1 connecting $u$ and $u'$.
This path contains a vertex $u''$ which is in distance $d$ from $u$.
Hence, the triangle $u',u,u''$ has distances $\delta$, $d$ and $\delta-d$.
Homogeneity implies that there is also a vertex in distance $\delta$ from $u$ and 
$\delta-d$ from $v$.
\end{proof}
This suggests a ``reverse approach'' to the study of metrically
homogeneous graphs with strong amalgamation and finite diameter $\delta$.
Rather than specifying constrains on the metric space, one can give
constraints in the form of forbidden cycles having only edges
of length 1 and $\delta$. All other distances are then uniquely determined
by means of Observation~\ref{obs:cycles}. In this setting, it suffices to only consider \emph{$(1,\delta)$-graphs},
that is, edge-labelled graphs created from the associated metric space by only
keeping edges of length $1$ and $\delta$: Every distance then corresponds to a unique orbit of 2-tuples.
The associated metric space can be then seen as the unique
\emph{homogenization} in the sense of
\cite{Covington1990,Hubicka2009,Hubicka2013,Hubicka2016}: every distance is uniquely
associated with an orbit of 2-tuples of the automorphism group of the $(1,\delta)$-graph.

It is easy to re-interpret Definition~\ref{defn:forbcycles} for $(1,\delta)$-edge-labelled cycles:

\begin{definition}[Forbidden ($1,\delta$)-cycles] \label{defn:1Dforbcycles}
Let $(\delta,\allowbreak K_1,\allowbreak K_2,\allowbreak C_0,\allowbreak C_1)$ be an admissible sequence of parameters. 
Denote by $\mathcal C_{i,j}$ the family of all $(1,\delta)$-cycles consisting
of $i$ edges of length $\delta$ and $j$ edges of length $1$.

The following are the building blocks of $\mathcal F_{1,\delta}$:
\begin{description}
\item[$K_1$-cycles] $\mathcal C_{0,j}$, $j$ is odd and $j< 2K_1$.
\item[Non-metric cycles] $\mathcal C_{1,j}$ such that $j<\delta$.
\item[$K_2$-cycles] $\mathcal C_{i,j}$ such that $i\geq 2$ is even, $j$ is odd, and $$2j<2C-4K_2-2-(C-1-2\delta)i.$$
\item[$C$-cycles] If $C=C'+1$ then all $\mathcal C_{i,j}$ such that $i\geq 3$ is odd and $$2j< C-1 - (C-1-2\delta)i.$$
\item[$C_0$-cycles] If $C>C'+1$ then all $\mathcal C_{i,j}$ such that $i=3$, $\delta+j$ is even and $$2j< C_0-1 - (C_0-1-2\delta)i.$$
\item[$C_1$-cycles] If $C>C'+1$ then all $\mathcal C_{i,j}$ such that $i=3$, $\delta+j$ is odd and $$2j< C_1-1 - (C_1-1-2\delta)i.$$
\item[The $C_1^5$-cycle] If $C>C'+1$, $\delta=5$ and the parameters come from Case~\ref{IIb}, then $\mathcal C_{5,0}=\{(5,5,5,5,5)\}$ is also forbidden.
\end{description}
\end{definition}
It follows that cycles with 0 edges of length $\delta$ are constrained only by
$K_1$, cycles with 1 edge of length $\delta$  are constrained only by
$\delta$ (non-metric cycles), cycles with $2i$ edges of length $\delta$, $i>1$ are
constrained only by $K_2$ and cycles with $2i+1$ edges of length $\delta$,
$i>1$ are constrained by $C$.
Also observe that $C-1-2\delta$ is the distance used to complete fork $\delta$-$\delta$
by the magic completion algorithm.

\begin{remark*}
The name ``$C_1^5$-cycle'' was not chosen haphazardly. The reason why the $C$-cycles and the $K_2$-cycles have the ``$n(C-1)$'' part in the respective inequalities is that in these cases the $d^C$-fork is used quite heavily. On the other hand, it turns out (cf. Fact~\ref{fact:oplus} and Section~\ref{sec:c_1iib}) that when $C'>C+1$ and the parameters are admissible, the inverse steps of the magic completion algorithm almost never use the $d^C$-fork (the exception being non-metric cycles and the very special case which produces the $C_1^5$-cycle). Although conceptually, the $C_1^5$-cycle really \textit{is} a $C_1$-cycle, for the purposes of this paper it was more convenient not to define the $C_0$- and $C_1$-cycles in full generality and treat the $C_1^5$-cycle as a special case.
\end{remark*}

\begin{table}

\begin{tabular}{r c  c  c  c  c  c |}
	                               & $0$                   & $1$         & $2$         & $3$         & $4$ & \multicolumn{1}{c}{$5$} \\
	\cline{5-7}
	$0\delta$                      & \multicolumn{3}{c|}{} & \small $K_1$       &             & \small $K_1$                                       \\\cline{4-4}
	$1\delta$                      & \multicolumn{2}{c|}{} & \small $\delta$ & \small $\delta$ & \small $\delta$ &                               \\\cline{3-3}
	$2\delta$                      & \multicolumn{1}{c|}{} & \small $K_2$       &             & \small $K_2$       &     &                         \\\cline{2-2}
	\multicolumn{1}{r|}{$3\delta$} & \small $C_1$                 &             & \small $C_1$       &             &     &                         \\
	\multicolumn{1}{r|}{$4\delta$} &                       & \small $K_2$       &             &             &     &                         \\
	\multicolumn{1}{r|}{$5\delta$} & \small $C_1^5$                 &             &             &             &     &                         \\\cline{2-7}\\\end{tabular}
\caption{Forbidden $(1,\delta)$-cycles for $\delta=5, K_1=3, K_2=3, C_0=16, C_1=13$}
\label{table:1}
\end{table}

\begin{table}

\begin{tabular}{r c  c  c  c |}
	                               & $0$                   & $1$         & $2$         & \multicolumn{1}{c}{$3$} \\
	\cline{5-5}
	$0\delta$                      & \multicolumn{3}{c|}{} &                                                     \\\cline{4-4}
	$1\delta$                      & \multicolumn{2}{c|}{} & \small $\delta$ & \small $\delta$                           \\\cline{3-3}
	$2\delta$                      & \multicolumn{1}{c|}{} & \small $K_2$       &             &                         \\\cline{2-2}
	\multicolumn{1}{r|}{$3\delta$} & \small $C$                     &\small $C$           &             &                         \\\cline{2-5}\\\end{tabular}
\begin{tabular}{r c  c  c  c |}
	                               & $0$                   & $1$         & $2$         & \multicolumn{1}{c}{$3$} \\
	\cline{5-5}
	$0\delta$                      & \multicolumn{3}{c|}{} & \small $K_1$                                                    \\\cline{4-4}
	$1\delta$                      & \multicolumn{2}{c|}{} &\small  $\delta$ & \small $\delta$                           \\\cline{3-3}
	$2\delta$                      & \multicolumn{1}{c|}{} & \small $K_2$       &             &                         \\\cline{2-2}
	\multicolumn{1}{r|}{$3\delta$} &                       & \small $C_1$       &             &                         \\\cline{2-5}\\\end{tabular}
\caption{Forbidden $(1,\delta)$-cycles for $\delta=4, K_1=1, K_2=3, C_0=14, C_1=11$
and $\delta=4, K_1=2, K_2=3, C_0=12, C_1=11$.}
\label{table:2}
\end{table}

\begin{table}
\begin{tabular}{r c  c  c  c |}
	                               & $0$                   & $1$         & $2$         & \multicolumn{1}{c}{$3$} \\
	\cline{5-5}
	$0\delta$                      & \multicolumn{3}{c|}{} &                                                \\\cline{4-4}
	$1\delta$                      & \multicolumn{2}{c|}{} & \small $\delta$ &\small  $\delta$                           \\\cline{3-3}
	$2\delta$                      & \multicolumn{1}{c|}{} & \small $K_2$       &             &                         \\\cline{2-2}
	\multicolumn{1}{r|}{$3\delta$} &                       & \small $C_1$       &             &                         \\\cline{2-5}\\\end{tabular}
\caption{Forbidden $(1,\delta)$-cycles for $\delta=4, K_1=1, K_2=3, C_0=14, C_1=11$.}
\label{table:3}
\end{table}

The distribution of individual constrains can be visualised as
shown in Table~\ref{table:1}. Here,
symbol with the coordinates $i\delta,j$ specifies that cycles with $i$
edges of length $\delta$ and $j$ edges of length 1 are forbidden by the
corresponding bound ($\delta$ denotes non-metric cycles).
Observe that whenever the cycles $\mathcal C_{i,j}$ are forbidden than also the cycles
$\mathcal C_{i-2, j}$ and $\mathcal C_{i, j-2}$ are forbidden whenever they make sense.
Moreover one can not forbid cycles where both the number of edges of length 1 and the number of edges of length $\delta$ are even.
This explains why the forbidden cycles ``form an upper left triangle''
and why there is at most one different type of bound for every even row/column and at most two bounds
for every odd row/column (the cycle $(5,5,5,5,5)$ can be in fact understood as a $C_1$-cycle with $n=2$, or, in this $(1,\delta)$-formalism, $C_1$-cycle with $i=5$, for our purposes it was, however, more convenient to treat it as a special case).

Several properties of the metrically homogeneous graphs can be seen from this
table.  For example,~\cite{coulson2018twists} identifies pairs of metrically
homogeneous graphs whose automorphism groups are isomorphic (and thus the
associated metric spaces are the same up to a non-trivial permutation of distances --- a \emph{twisted isomorphism}).
These are pairs of metrically homogeneous graphs such that the table for one has non-empty cells exactly where the transposition of the other has non-empty cells.
One such pair is shown in Table~\ref{table:2}. If the table is symmetric across
the diagonal, then the metrically homogeneous graph has a twisted automorphism to
itself as shown in Table~\ref{table:3}.
This covers all the exceptional
cases identified in~\cite{coulson2018twists}. The regular case corresponds
to the situation where either edges of length 1 or edges of length $\delta$
are not necessary to preserve the structure of the metrically homogeneous graph, that is, when it is already described by a $1$-graph or a $\delta$-graph in the sense of this chapter.

We shall also remark that in the sense of Cherlin, Shelah, and Shi~\cite{Cherlin1999,Cherlin2001}, the
metric spaces associated to the metrically homogeneous graphs are 
the existentially complete universal structures for the classes of
countable $(1,\delta)$-graphs omitting homomorphic images of the
given set of forbidden $(1,\delta)$-cycles.
This connection and more consequences will be explored in greater detail
elsewhere.

\bibliography{ramsey.bib}

\end{document}